\theoremstyle{plain}
\begin{document}


\theoremstyle{plain}
\newtheorem{theorem}{Theorem} [section]
\newtheorem{corollary}[theorem]{Corollary}
\newtheorem{lemma}[theorem]{Lemma}
\newtheorem{proposition}[theorem]{Proposition}


\theoremstyle{definition}
\newtheorem{definition}[theorem]{Definition}
\theoremstyle{remark}
\newtheorem{remark}[theorem]{Remark}

\numberwithin{theorem}{section}
\numberwithin{equation}{section}

\title{On a fractional Monge-Amp\`ere operator}

\thanks{Luis Caffarelli has been supported by NSF DMS-1540162. Fernando Charro partially supported by a MEC-Fulbright and Juan de la Cierva fellowships and MINECO grants MTM2011-27739-C04-01 and MTM2014-52402-C3-1-P, and is part of the Catalan research group 2014 SGR 1083 (Ògrup reconegutÓ)}

\author[L. Caffarelli]{Luis Caffarelli}

\address{Department of Mathematics, The University of Texas, 1 University Station C1200, Austin, TX 78712}

\email{caffarel@math.utexas.edu}

\author[F. Charro]{Fernando Charro}

\address{Universitat Polit\`ecnica de Catalunya, Departament de Matem\`atica Aplicada I,
 Diagonal 647, 08028 Barcelona, Spain}
 
\email{fernando.charro@upc.edu}

\keywords{ Nonlinear elliptic equations, Integro-differential equations, Monge-Amp\`ere.
\\
\indent 2010 {\it Mathematics Subject Classification:}
35R11, 
53C21, 
35J96
}
\date{}
\
\maketitle

\begin{abstract}
In this paper we consider a fractional analogue of the Monge-Amp\`ere operator. Our operator is a concave envelope of fractional linear operators  of the form
$
\inf_{A\in \mathcal{A}}L_Au,
$
 where the set of operators is a degenerate class that corresponds to  all affine transformations of determinant one of a given multiple of the fractional Laplacian.

We set up a relatively simple framework of global solutions prescribing data at infinity and global barriers. In our key estimate, we show that the operator remains strictly elliptic, which allows to apply known regularity results for uniformly elliptic operators and deduce that solutions  are classical.
\end{abstract}

\section{Introduction}\label{section.intro}
The classical Monge-Amp\`ere equation $\det D^2u=f$ arises in many areas of analysis, geometry, and applied mathematics. Standard boundary value problems are the Dirichlet problem and optimal transportation problem, where we prescribe the image of a domain by the gradient map.  

In the  Dirichlet problem, one prescribes a smooth domain $\Omega\subset\mathbb{R}^n$, boundary data $g(x)$ on $\partial \Omega$, and a right-hand side $f(x)$ in $\Omega$, and studies  existence and regularity of a function $u$ such that
\begin{equation}\label{problem.intro.model}
\left\{
\begin{split}
& \det D^2 u=f \quad\textrm{in}\ \Omega\\
& u=g\quad\textrm{on}\ \partial \Omega.
\end{split}
\right. 
\end{equation}

For the problem to fit in the framework of the theory of fully nonlinear elliptic equations, one must seek convex solutions $u$ to ensure that $\det D^2 u$ is
indeed a monotone function of $D^2u$. Thus, we must require  $f(x)$  positive and $\Omega$  convex. The convexity of $\Omega$ is required in order to construct appropriate smooth subsolutions that act as lower barriers, see \cite{Ca-Ni-Sp2}.

In that case, there is considerable work in the literature establishing the existence, uniqueness and regularity of solutions to \eqref{problem.intro.model}, see \cite{ Ca-Ni-Sp, Ca-Ni-Sp2, Gutierrez, Trudinger.Wang} and the references therein. The main ingredients entering  the theory are, roughly speaking, the following:
\begin{enumerate}
 \item[(a)] The Monge-Amp\`ere equation is a concave fully nonlinear equation.
For a convex solution
\[
\det D^2u=f
\]
is equivalent to
\[
\inf_{L\in \mathcal{A}}Lu=f,
\]
where $\mathcal{A}$ is the family of linear operators $Lu={\rm trace}\left(AD^2u\right)$ for $A>0$   with eigenvalues $\lambda_j(A)$ that satisfy $\prod_{j=1}^{n} \lambda_j(A)=n^{-n}f^{n-1}$. Furthermore, if we take $nA$ equal to the matrix of cofactors of $D^2u$, then
\[
n^n\prod_{j=1}^{n} \lambda_j(A)=(\det D^2u)^{n-1}=f^{n-1},
\]
the infimum is realized 
 and the equation satisfied. Moreover, from the concavity of $\det^{1/n}(\cdot)$ any other choice of the eigenvalues would give a larger value than the prescribed $f$, making  $u$ a subsolution.

In other words, the Monge-Amp\`ere equation can be thought of as the infimum of a family of linear operators that consists of all affine transformations of determinant one of a given multiple  of the Laplacian.

\medskip

 \item[(b)] The fact that $\det D^2u$ can be represented as a concave fully nonlinear equation implies that pure second derivatives are subsolutions of an equation with bounded measurable coefficients and as such, are bounded from above.

Indeed, if we consider the second-order incremental quotient in the direction $e\in\partial B_1(0)$, 
 \[
 \delta(u,x_0,y)=u(x_0+he)+u(x_0-he)-2u(x_0)
 \]
and choose
\[
Lv={\rm trace}\left(AD^2v\right)
\]
with $nA$ the matrix of cofactors of  $D^2u(x_0)$, we have that $Lu(x_0)=f(x_0)$
while 
 on the other hand, the matrix
\[
B=\left[\frac{f(x_0+he)}{f(x_0)}\right]^\frac{n-1}{n}A
\]
satisfies
\[
\det B=n^{-n}f(x_0+he)^{n-1},
\]
which makes it eligible to compete for the minimum of
${\rm trace}\left(ND^2u(x_0+he)\right)$.
This implies, 
\[
\left[\frac{f(x_0+he)}{f(x_0)}\right]^\frac{n-1}{n}Lu(x_0+he)\geq f(x_0+he)
\]
or equivalently,
\[
Lu(x_0+he)\geq f(x_0+he)^\frac{1}{n}f(x_0)^\frac{n-1}{n}.
\]
We deduce that at a maximum of a second derivative $D_{ee}^2u$ the function $f$ must satisfy,
\[
f(x_0)^\frac{n-1}{n} D_{ee}^2\,f^{1/n}(x_0)\leq0.
\]
If $D_{ee}^2f$ is bounded and we have an appropriate barrier, plus control of the second derivatives of $u$ at the boundary of $\Omega$, we deduce that $u$ is not only convex but also semiconcave. For that purpose, the boundary and data must be smooth and the 
domain strictly convex. This allows for the construction of appropriate subsolutions as barriers.

 \medskip
 
 \item[(c)] Then, the last ingredient of the theory is that for a convex solution the equation $\prod_{j=1}^{n}\lambda_j=f$ with $f$ strictly positive implies that all $\lambda_j$ are strictly positive (and not merely non-negative). This implies that the operators involved with the minimization can be restricted to a uniformly elliptic family and the corresponding general theory applies.
In particular, Evans-Krylov theorem implies that solutions are $\mathcal{C}^{2,\alpha}$ and from there, as smooth as two derivatives better than $f$.

\end{enumerate}

\medskip

The discussion above suggests that one could carry out a similar program  for a non-local or fractional Monge-Amp\`ere equation of the form
\[
\inf L_Au=f
\]
where the set of operators $L_A$ corresponds to that of all affine transformations of determinant one of a given multiple of the fractional Laplacian. In fact, one may consider any concave function of the Hessian as in \cite{Ca-Ni-Sp2} as an infimum of affine transformations of the Laplacian, the affine transformations corresponding now to the different linearization coefficients of the function $F(\lambda_1,\ldots,\lambda_n)$ and consider the corresponding nonlocal operator.

One can take $\inf_{A\in\mathcal{A}} L_Au=f$, where 
$\mathcal{A}$ corresponds to a family of symmetric positive matrices with determinant bounded from above and below,
\[
0<\lambda\leq\det A\leq\Lambda,
\]
and 
\[
L_Au(x)=\int_{\mathbb{R}^n}\frac{u(x+y)+u(x-y)-2u(x)}{|A^{-1}y|^{n+2s}}\,dy.
\]

The kernel under consideration does not need to be necessarily $|A^{-1}x|^{-(n+2s)},$ it could be a more general kernel $K(Ax)$. In fact, the geometry of the domain is an important issue for the ``inherited from the boundary" regularity theory for degenerate operators depending on the eigenvalues of the Hessian, see \cite{Ca-Ni-Sp2}.

\medskip

In this article we shall set up a relatively simple framework of global solutions prescribing data at infinity and global barriers to avoid having to deal with the technical issues inherited from boundary data, which is rather complex for non-local equations. As in the second order case, we intend to prove:
\begin{itemize}
 \item[(a)] Existence of solutions.
 \item[(b)] Solutions are semiconcave, i.e. second derivatives are bounded from above.
 \item[(c)] Along each line, the fractional Laplacian is bounded from above and strictly positive.
 \item[(d)]  The operators that are close to the infimum remain strictly elliptic.
 \item[(e)] The non-local fully nonlinear theory developed in \cite{Caffarelli.Silvestre,Caffarelli.Silvestre2} applies, in particular the nonlocal Evans-Krylov theorem, and solutions are ``classical".
\end{itemize}

To be more precise, let us introduce  the non-local Monge-Amp\`ere operator  $\mathcal{D}_s$ that we are going to consider in the sequel, given by 
\begin{equation}\label{definicion.nonlocal.det}
\begin{split}
\mathcal{D}_s u(x)
&=\inf\bigg\{
\text{P.V.}\int_{\mathbb{R}^n}\frac{u(y)-u(x)}{|A^{-1}(y-x)|^{n+2s}}\,
\, dy\ \bigg|\ A>0,\ \det A=1\bigg\}\\
&=\inf\bigg\{\frac{1}{2}
\int_{\mathbb{R}^n}\frac{u(x+y)+u(x-y)-2u(x)}{|A^{-1}y|^{n+2s}}\, dy\ \bigg|\ A>0,\ \det A=1\bigg\}.
\end{split}
\end{equation}
We shall always use the definition that is most suitable to each case. Let us mention that if $u$ is convex, asymptotically linear, and $1/2<s<1$, then
\[
\lim_{s\to1}\big((1-s)\,\mathcal{D}_s u(x)\big)=\det(D^2u(x))^{1/n},
\]
up to a constant factor that depends only on the dimension $n$ (see Appendix A for a proof of this fact).

Another recent attempt to approach nonlocal Monge-Amp\`ere operators is the operator proposed in \cite{caffarelli.silvestre.MA}. The interested reader should also check \cite{Guillen.Schwab}.

\begin{remark}
 We can assume  without loss of generality that the matrices $A$ in the definition of 
$\mathcal{D}_s u(x)$  are symmetric and positive definite. This follows from the (unique) polar decomposition of $A^{-1}$, namely $A^{-1} = OS^{-1}$,
where $O$ is orthogonal and $S^{-1}$ is a positive definite symmetric matrix.

\end{remark}

We shall study the following Dirichlet problem, 
\begin{equation}\label{main.problem}
\left\{
\begin{split}
&\mathcal{D}_su(x)=u(x)-\phi(x) \qquad \text{in}
\
\mathbb{R}^n \\
&(u- \phi)(x)\to0\quad\!\text{as}\ |x|\to\infty,
\end{split}
\right.
\end{equation}
where  $1/2<s<1$ and we  prescribe boundary data at infinity $\phi(x)$ (that, at the same time,  acts as a smooth lower barrier). The results below can be extended to the problem
\begin{equation}\label{main.problem.g}
\left\{
\begin{split}
&\mathcal{D}_su(x)=g\big(x,u(x)\big)  \qquad \text{in}
\
\mathbb{R}^n \\
&(u- \phi)(x)\to0\quad\!\text{as}\ |x|\to\infty
\end{split}
\right.
\end{equation}
under appropriate assumptions on $g$ (see \cite{Jian.Wang} for a local analogue of problem \eqref{main.problem.g}). Let us now describe the precise hypothesis that we shall require on $g$ and $\phi$.

First,   $\phi\in\mathcal{C}^{2,\alpha}(\mathbb{R}^n)$ is strictly convex in compact sets and $\phi=\Gamma+\eta$ near infinity, with $\Gamma(x)$ a cone and 
\[
|\eta(x)|  \leq a |x|^{-\epsilon},
\qquad
|\nabla\eta(x)|  \leq a |x|^{-(1+\epsilon)},
\qquad\text{and}
\qquad
|D^2\eta(x) |\leq a |x|^{-(2+\epsilon)}
\]
for some  constants $a>0$ and $0<\epsilon<n$.
In particular, as $|x|\to\infty$,
\[
-(- \Delta)^s \eta(x)= O\big( |x| ^{-(2s+ \epsilon)}\big)
\]
(see Section \ref{sect.not.and.prelim} for the definition of the fractional Laplacian) and
\[
c_1|x|^{1-2s}\leq-(- \Delta)^s \Gamma(x)\leq c_2|x|^{1-2s}
\]
from homogeneity, where $c_1,c_2$ are some positive constants depending on the strict convexity of the section of $\Gamma$. We normalize $\phi$ so that $\phi(0)=0$, $\nabla\phi(0)=0$. 

\medskip

The model problem that we consider  is  $g(x,u(x))=u(x)- \phi(x)$. On the other hand, the general hypotheses on $g:
\mathbb{R}^{n+1}
\to\mathbb{R}$  that we shall consider are:
\begin{equation}\label{semiconvexity.condition.g}
\text{$g$ is globally semiconvex with constant $C$,}
\end{equation}
\begin{equation}\label{Lipschitz.condition.g}
\text{$x\mapsto g(x,t)$ is  Lipschitz continuous with constant $\textrm{Lip}(g)$, uniformly in $t$},
\end{equation}
and, there exists $\mu>0$ such that
\begin{equation}\label{monotonicity.condition.g}
g(x,t_1)-g(x,t_2)\geq 
\mu(t_1-t_2)\qquad\forall t_1,t_2\in\mathbb{R},\ t_1\geq t_2\quad \textrm{uniformly in}\ x.
\end{equation}

We would like to point out that hypothesis \eqref{semiconvexity.condition.g} implies that the function $g$ is  locally Lipschitz continuous (see for instance \cite[Proposition 2.1.7]{Cannarsa.Sinestrari}). In particular,
\[
\frac{|g(x,t)-g(y,t)|}{|x-y|} \leq\frac{2\,\textrm{osc}\big(g(\cdot,t),\overline B_{R/2}(x_0)\big)}{R}+CR \qquad \forall x,y\in B_{R/2}(x_0),
\]
for any $R>0$.
Therefore, hypothesis \eqref{Lipschitz.condition.g} could be replaced, for instance, by the following
\begin{equation}\label{Lipschitz.condition.g.2}
\begin{split}
x\mapsto g(x,t)\ &\text{is  Lipschitz continuous in $\mathbb{R}^{n}\setminus B_{R_0}(0)$ for some radius ${R_0}>0$} \\
&\text{with constant $\textrm{Lip}(g,\mathbb{R}^{n}\setminus B_{R_0}(0))$, uniformly in $t$}, 
\end{split}
\end{equation}
and
\begin{equation}
\textrm{osc}\big(g(\cdot,t),\overline B_{ R_0/2}(x_0)\big)\ \textrm{bounded in $t$}.
\end{equation}
In the sequel, we shall assume \eqref{Lipschitz.condition.g}  for simplicity.

\medskip

The paper is organized as follows. In Section \ref{sect.not.and.prelim} we present the notation, the notion of solution, and some preliminary results.
In Section \ref{sect4.unif.elipticity} we prove the main result of the paper,  
namely,  that matrices that are too degenerate do not count for the infimum in  \eqref{definicion.nonlocal.det}, effectively proving that the fractional Monge-Amp\`ere operator is locally uniformly elliptic and thus the known theory for uniformly elliptic nonlocal operators applies (see for instance \cite{Caffarelli.Silvestre} and the references therein).
 In
Section \ref{Sect.comparison} we prove a comparison principle for problem \eqref{main.problem.g}, and in  Section \ref{sect3.exist.lip} we prove Lipschitz continuity and semiconcavity of solutions to  problem \eqref{main.problem.g}.
Finally, in Section \ref{sect.existence.final.final} we prove existence of solutions to the model problem \eqref{main.problem}.

\medskip

\section{Notation and preliminaries}\label{sect.not.and.prelim}

In this section we are going to state notations and recall some basic results and definitions.

For square matrices, $A>0$ means positive definite and $A\geq0$ positive semidefinite. We shall denote $\lambda_i(A)$ the eigenvalues of $A$, in particular $\lambda_{\min}(A)$ and $\lambda_{\max}(A)$ are the smallest and largest eigenvalues, respectively.

We shall denote the $k$th-dimensional ball of radius 1 and center 0 by $B_1^{k}(0)=\{x\in\mathbb{R}^k:\ |x|\leq1\}$  and  the corresponding $(k-1)$th-dimensional sphere by $\partial B_1^{k}(0)=\{x\in\mathbb{R}^k:\ |x|=1\}$. Whenever $k$ is clear from context, we shall simply write $B_1(0)$ and $\partial B_1(0)$.  $\mathcal{H}^{k}$ stands for the $k$-dimensional Haussdorff measure.  We shall denote $\omega_{k}=
\mathcal{H}^{k-1}\big(\partial B_1^{k}(0)\big)=k\,|B_1^{k}(0)|=\frac{2\pi^{k/2}}{\Gamma(k/2)}$.

Given a function $u$, we shall denote the second-order increment  of $u$ at $x$ in the direction of $y$  as $\delta(u,x,y)=u(x+y)+u(x-y)-2u(x)$. 

Let $A\subset\mathbb{R}^n$ be an open set. We say that a function $u:A
\to
\mathbb{R}$ is semiconcave if it is continuous in $A$ and there exists $C
\geq 0$ such that
$\delta(u,x,y)
\leq C|y|^2$
for all $x, y\in 
\mathbb{R}^n$ such that $[x-y, x +y] 
\subset A$. The constant $C$  is called a
semiconcavity constant for $u$ in $A$.

Alternatively, a function $u$ is semiconcave in $A$ with constant $C$  if
 $u(x)- \frac{C}{2}|x|^2$ is concave in $A$. Geometrically, this means that the graph of $u$ can be touched from above at every point by a paraboloid of the type $a+\langle b,x\rangle+\frac{C}{2}|x|^2$.

A function $u$ is called semiconvex in $A$ if $-u$ is semiconcave.

Let us mention  here for the reader's convenience the definition of the fractional Laplacian ,
\[
-(- \Delta )^{s}u(x)=
c_{n,s}\,\text{P.V.}\int_{\mathbb{R}^n}\frac{u(y)-u(x)}{|y-x|^{n+2s}}\,
\, dy
=
\frac{c_{n,s}}{2}\,\int_{\mathbb{R}^n}\frac{u(x+y)+u(x-y)-2u(x)}{|y|^{n+2s}}\,
\, dy
\]
where $c_{n,s}$ is a normalization constant. Notice that  $-c_{n,s}^{-1}\,(- \Delta )^{s}u(x)$ belongs to the class of operators over which the infimum in the definition of $\mathcal{D}_s u(x)$ is taken.

\medskip

We recall from \cite{Caffarelli.Silvestre} the notion of viscosity solution that we are going to use in the sequel.
\begin{definition}\label{defn.viscosity.solution}
A function $u :\mathbb{R}^n \to \mathbb{R}$, upper (resp. lower) semicontinuous in $\overline \Omega$, is said to be a subsolution (supersolution) to $\mathcal{D}_su = f$, and we write $\mathcal{D}_su\geq f$ (resp. $\mathcal{D}_su \leq f$), if every time all the following happen,
\begin{itemize}
\item $x$ is a point in $\Omega$,
\item $N$ is an open neighborhood of $x$ in $\Omega$,
\item $\psi$ is some $C^2$ function in $\overline N$,
\item $\psi(x) = u(x)$,
\item $\psi(y) > u(y)$ (resp. $\psi(y) < u(y)$) for every $y \in N \setminus \{x\}$,
\end{itemize}
and if we let \[ v := \begin{cases}
               \psi &\text{in } N \\
	       u &\text{in } \mathbb{R}^n \setminus N \ ,
              \end{cases} \]
then we have $\mathcal{D}_sv(x) \geq f(x)$ (resp. $\mathcal{D}_sv(x) \leq f(x)$).
A solution is a function $u$ that is both a subsolution and a supersolution.
\end{definition}

The following lemma states that $\mathcal{D}_su$ can be evaluated classically  at those points $x$ where $u$ can be touched by a paraboloid. 

\begin{lemma}\label{viscosidad.clasico}
Let $1/2<s<1$ and $u:\mathbb{R}^n\to\mathbb{R}$ with asymptotically linear growth. If we have
$\mathcal{D}_{s}u\geq f$ in $\mathbb{R}^n$ (resp.  $\mathcal{D}_{s}u\leq f$)
 in the viscosity sense and $\psi$ is a $\mathcal{C}^2$ function that touches $u$ from above (below)  at a point $x$, then $\mathcal{D}_s u(x)$ is defined in the classical sense and $\mathcal{D}_s u(x)\geq f(x)$ (resp.  $\mathcal{D}_{s}u(x)\leq f(x)$).
\end{lemma}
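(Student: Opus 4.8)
The plan is to test the viscosity inequality against the standard competitor obtained by replacing $u$, inside a small ball around the touching point, by the touching paraboloid, and then to let the radius of that ball shrink to zero. I will treat only the subsolution case in detail, the supersolution case following by applying it to $-u$. Throughout, $x_0$ denotes the point at which $\psi$ touches $u$.

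First I would upgrade the touching to a strict one. If $\psi\in\mathcal{C}^2$ touches $u$ from above at $x_0$, so that $\psi\ge u$ on some ball $B_{\rho_0}(x_0)$ with $\psi(x_0)=u(x_0)$, set $\psi_\varepsilon(y)=\psi(y)+\varepsilon|y-x_0|^2$ for a fixed $\varepsilon>0$; this is $\mathcal{C}^2$, equals $u$ at $x_0$, and is strictly above $u$ on $B_{\rho_0}(x_0)\setminus\{x_0\}$. For $0<\rho\le\rho_0$ form the competitor
\[
v_\rho=\begin{cases}\psi_\varepsilon &\text{in }B_\rho(x_0),\\ u &\text{in }\mathbb{R}^n\setminus B_\rho(x_0),\end{cases}
\]
which is $\mathcal{C}^2$ near $x_0$ and agrees with $u$ (hence is asymptotically linear) outside a compact set, so that $\mathcal{D}_s v_\rho(x_0)$ and all integrals below are classically defined. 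Definition \ref{defn.viscosity.solution} applied with $N=B_\rho(x_0)$ then gives $\mathcal{D}_s v_\rho(x_0)\ge f(x_0)$.

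Next I would unwind the competitor. Since $|y|<\rho$ forces $x_0\pm y\in B_\rho(x_0)$ and $|y|\ge\rho$ forces $x_0\pm y\notin B_\rho(x_0)$, and since $v_\rho(x_0)=\psi_\varepsilon(x_0)=u(x_0)$, one has $\delta(v_\rho,x_0,y)=\delta(\psi_\varepsilon,x_0,y)$ for $|y|<\rho$ and $\delta(v_\rho,x_0,y)=\delta(u,x_0,y)$ for $|y|\ge\rho$. Fixing any admissible matrix $A$ (symmetric, positive definite, $\det A=1$) and using that $\mathcal{D}_s v_\rho(x_0)$ is the infimum over such $A$ of the corresponding integral,
\[
\frac12\int_{|y|\ge\rho}\frac{\delta(u,x_0,y)}{|A^{-1}y|^{n+2s}}\,dy+\frac12\int_{|y|<\rho}\frac{\delta(\psi_\varepsilon,x_0,y)}{|A^{-1}y|^{n+2s}}\,dy\ \ge\ \mathcal{D}_s v_\rho(x_0)\ \ge\ f(x_0).
\]
The second integral is $O(\rho^{2-2s})\to0$ as $\rho\to0$ because $|\delta(\psi_\varepsilon,x_0,y)|\le\|D^2\psi_\varepsilon\|_{L^\infty(B_{\rho_0})}\,|y|^2$ and $\int_{|y|<\rho}|y|^2|A^{-1}y|^{-n-2s}\,dy\le\lambda_{\max}(A)^{n+2s}\,\omega_n\,\rho^{2-2s}/(2-2s)$ (here $s<1$ is used). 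Also the positive part of $y\mapsto\delta(u,x_0,y)$ is integrable against the kernel: near the origin $\delta(u,x_0,y)\le\delta(\psi_\varepsilon,x_0,y)=O(|y|^2)$, and near infinity $|\delta(u,x_0,y)|=O(|y|)$ by asymptotic linearity, which is integrable since $s>1/2$.

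The final step, which is the only place the subsolution hypothesis really does work, is a finiteness argument. Rearranging the displayed inequality isolates
\[
\frac12\int_{\{\delta(u,x_0,\cdot)<0\}}\frac{-\delta(u,x_0,y)}{|A^{-1}y|^{n+2s}}\,dy\ \le\ \frac12\int_{\{\delta(u,x_0,\cdot)\ge0\}}\frac{\delta(u,x_0,y)}{|A^{-1}y|^{n+2s}}\,dy-f(x_0)+o(1),
\]
and letting $\rho\downarrow0$ the left side increases monotonically while the right side stays bounded; by monotone convergence the negative part is integrable, so $\delta(u,x_0,\cdot)$ is integrable against $|A^{-1}\cdot|^{-n-2s}$ and $\mathcal{D}_s u(x_0)$ is classically defined. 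Then passing to the limit $\rho\to0$ in the inequality itself, now by dominated convergence for the first integral, yields $\tfrac12\int\delta(u,x_0,y)|A^{-1}y|^{-n-2s}\,dy\ge f(x_0)$, and taking the infimum over admissible $A$ gives $\mathcal{D}_s u(x_0)\ge f(x_0)$. I expect the main obstacle to be precisely this finiteness point: a priori $u$, being merely upper semicontinuous, could be so irregular from below near $x_0$ that the defining integrals diverge to $-\infty$, and one must use the subsolution inequality to exclude this; the remaining manipulations are routine splitting into the regions $|y|\lessgtr\rho$ together with the linear growth of $u$ at infinity.
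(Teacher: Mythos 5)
Your subsolution argument is essentially correct and follows the same strategy as the paper: form the blended competitor $v_\rho$, test the viscosity inequality, and use the subsolution bound together with the quadratic control of $\delta(\psi_\varepsilon,x_0,\cdot)$ near the origin and the linear growth at infinity to force integrability of $\delta(u,x_0,\cdot)/|A^{-1}\cdot|^{n+2s}$ (the rearranged display should really be written with the regions intersected with $\{|y|\ge\rho\}$, but your verbal description of the monotone limit is the right idea; the paper deduces integrability only for $A=I$ via the Caffarelli--Silvestre lemma and then transfers to all $A$ through $\lambda_{\min}(A)^{n+2s}\le |A^{-1}y|^{-(n+2s)}|y|^{n+2s}\le\lambda_{\max}(A)^{n+2s}$, whereas you run the estimate directly for each $A$; both are fine).

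However, there is a genuine gap in the claim that ``the supersolution case follows by applying it to $-u$.'' The operator $\mathcal{D}_s$ is an infimum and is \emph{not} odd: $\mathcal{D}_s(-u)=\inf_A L_A(-u)=-\sup_A L_A u\neq -\mathcal{D}_s u$. Applying $u\mapsto -u$ turns a viscosity supersolution of $\mathcal{D}_s u=f$ into a viscosity subsolution of the \emph{different} equation $\sup_A L_A w=-f$, not of $\mathcal{D}_s w=-f$, so the conclusion you proved does not apply. More importantly, the reformulation does not remove the real asymmetry: in the supersolution case the viscosity inequality $\mathcal{D}_s v_\rho(x_0)\le f(x_0)$ only furnishes \emph{one} nearly-optimal matrix (which a priori depends on $\rho$), not the estimate ``$L_A v_\rho(x_0)\ge f(x_0)$ for every fixed $A$'' that drives your whole ``fix $A$, shrink $\rho$'' scheme. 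The correct treatment, as in the paper, is to fix $\varepsilon>0$, choose a single $A_\varepsilon$ with $L_{A_\varepsilon} v_\rho(x_0)\le f(x_0)+\varepsilon$, note that when $\psi$ touches from below one has $\delta(v_\rho,x_0,y)\nearrow\delta(u,x_0,y)$ as $\rho\downarrow0$ with the negative part dominated by the $O(|y|^2)$ bound from the $\mathcal{C}^2$ test function and by linear growth at infinity, and then pass to the limit for that fixed $A_\varepsilon$ by monotone convergence. This step needs to be written out separately; it does not come for free from the subsolution case.
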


\begin{proof}
Let us deal first with the subsolution case, that is, assume first that  $\psi\in\mathcal{C}^2$ touches $u$ from above at a point $x$.
Define for $r>0$,  
\[
 v_r(y) = \begin{cases}
        \psi(y) & \text{in } B_r(x) \\
        u(y) & \text{in } \mathbb{R}^n \setminus B_r(x).
       \end{cases}
\]
Then, we have that
$$
-c_{n,s}^{-1}\,(- \Delta )^{s}v_r(x)\geq \mathcal{D}_{s}v_r(x)\geq f(x)
$$
and then the arguments in the proof of \cite[Lemma 3.3]{Caffarelli.Silvestre} yield that $\delta(u,x,y)/|y|^{n+2s}$ is integrable. Therefore, $-(- \Delta)^su(x)$ is defined in the classical sense and $\mathcal{D}_s u(x)<+\infty$. Notice that,
\[
\lambda_{\min}(A)^{n+2s}\frac{\delta(u,x,y)}{|y|^{n+2s}}\leq\frac{\delta(u,x,y)}{|A^{-1}y|^{n+2s}}\leq
\lambda_{\max}(A)^{n+2s}\frac{\delta(u,x,y)}{|y|^{n+2s}}.
\]
Thus,  $\delta(u,x,y)/|A^{-1}y|^{n+2s}$ is integrable and 
\begin{equation}\label{defn.L_A}
 L_A u(x)=
\frac{1}{2}\int_{\mathbb{R}^n}\frac{\delta(u,x,y)}{|A^{-1}y|^{n+2s}}\, dy
\end{equation}
is also defined in the classical sense. By definition of viscosity solution, we have
\[
L_A u(x)+ L_A (v_r-u)(x)\geq \mathcal{D}_s v_r(x)\geq f(x).
\]
But then, $0\leq\delta(v_r-u,x,y)\leq\delta(v_{r_0}-u,x,y)$ for all $r<r_0$, $\delta(v_{r_0}-u,x,y)/|A^{-1}y|^{n+2s}$ is integrable and $\delta(v_r-u,x,y)\to0$ as $r\to0$. Hence, by the dominated convergence theorem, $L_A (v_r-u)(x)\to0$ as $r\to0$. We conclude
$L_A u(x)\geq f(x)$ in the classical sense. Since the matrix $A$ is arbitrary and we could pick any matrix $A>0$ with $\det A=1$, we have that $\mathcal{D}_s u(x)\geq f(x)$ in the classical sense.

\medskip
In the supersolution case, that is, when  $\psi\in\mathcal{C}^2$ touches $u$ from below at  $x$, some modifications are required. Fix $\epsilon>0$, arbitrary, and let $A_\epsilon>0$ with $\det A_\epsilon=1$ such that 
\[
L_{A_\epsilon} v_r(x)\leq f(x)+\epsilon.
\]
It is easy to see that $\delta(v_r,x,y)$ is non-decreasing in $r$ and $\delta(v_r,x,y)\to\delta(u,x,y)$ as $r\to0$. By the monotone convergence theorem, $\delta(u,x,y)/|A_{\epsilon}^{-1}y|^{n+2s}$ is integrable and $L_{A_\epsilon} u(x)\leq f(x)+\epsilon$ in the classical sense.
We find that
\[
\mathcal{D}_s u(x)\leq L_{A_\epsilon} u(x)\leq f(x)+\epsilon,
\]
and we conclude letting $\epsilon\to0$, since it is arbitrary.
\end{proof}

\medskip

\section{Local uniform ellipticity of the fractional Monge-Amp\`ere equation}\label{sect4.unif.elipticity}

In this section we shall prove that the infimum in the definition of $\mathcal{D}_s$, see \eqref{definicion.nonlocal.det}, cannot be realized by matrices that are too degenerate, effectively proving that the fractional Monge-Amp\`ere operator is locally uniformly elliptic. Then, existing theory for uniformly elliptic operators is available (see \cite{Caffarelli.Silvestre, Caffarelli.Silvestre2} and the references therein).

To this aim, consider the following approximating, non-degenerate operator, 
\begin{equation}\label{definicion.nonlocal.det.approximate}
\begin{split}
\mathcal{D}_s^\theta u(x)&=\inf\bigg\{
\text{P.V.}\int_{\mathbb{R}^n}\frac{u(y)-u(x)}{|A^{-1}(y-x)|^{n+2s}}\,
\, dy\ \bigg|\ A>0,\ \det A=1,\  \lambda_{\min}(A)\geq\theta\bigg\}\\
&=\inf\bigg\{\frac{1}{2}
\int_{\mathbb{R}^n}\frac{u(x+y)+u(x-y)-2u(x)}{|A^{-1}y|^{n+2s}}\, dy\ \bigg|\ A>0,\ \det A=1,\  \lambda_{\min}(A)\geq\theta\bigg\}.
\end{split}
\end{equation}
Let us point out that the conditions $\det A=1$, and $\lambda_{\min}(A)\geq\theta$ imply $\lambda_{\max}(A)\leq\theta^{1-n}$ and this bound is realized by matrices with eigenvalues $\theta$ (simple) and $\theta^{1-n}$ (multiplicity $n-1$). Therefore, $\mathcal{D}_s^\theta$ belongs to the class of uniformly elliptic, nonlocal operators with extremal Pucci operators 
\[
\mathcal{M}_{\theta,\theta^{1-n}}^{+} u(x)=\sup\bigg\{\frac{1}{2}
\int_{\mathbb{R}^n}\frac{u(x+y)+u(x-y)-2u(x)}{|A^{-1}y|^{n+2s}}\, dy\ \bigg|\ \theta\, I\leq A \leq \theta^{1-n}I\bigg\}
\]
and
\[
\mathcal{M}_{\theta,\theta^{1-n}}^{-} u(x)=\inf\bigg\{\frac{1}{2}
\int_{\mathbb{R}^n}\frac{u(x+y)+u(x-y)-2u(x)}{|A^{-1}y|^{n+2s}}\, dy\ \bigg|\ \theta\, I\leq A \leq \theta^{1-n}I\bigg\}.
\]
Observe that in general $\mathcal{M}_{\theta,\theta^{1-n}}^{-} u(x)<
\mathcal{D}_s^\theta u(x),$ as the class of matrices over which the infimum is taken is broader for the Pucci operator. 

\medskip

The main result of this section and of the paper is the following.

\begin{theorem}\label{main.result.nondegeneracy.monge.ampere}
Consider $\frac12<s<1$ and let $u$ be Lipschitz continuous and semiconcave (with constants $L$ and $C$ respectively) and such that\
\begin{equation}\label{problem.main.estimate}
(1-s)\,\mathcal{D}_su(x)\geq \eta_0 \quad \forall x
\in
\Omega
\end{equation}
in the viscosity sense for some constant $\eta_0>0$ and $\Omega\subset\mathbb{R}^n$. Then, 
\begin{equation}\label{eq.main.result}
\mathcal{D}_s  u(x)=\mathcal{D}_s^\theta  u(x) \quad \forall x
\in
\Omega
\end{equation}
in the classical sense,
for $\mathcal{D}_s^\theta$  the approximating operator defined by \eqref{definicion.nonlocal.det.approximate} and
\[
\theta<
\left(
\frac{\mu_0}{n\mu_1}
\right)^
\frac{n-1}{2s}
\]
with  $\mu_0,\mu_1$ defined in \eqref{def.mu0} and \eqref{def.mu1} below.
\end{theorem}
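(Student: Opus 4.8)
The plan is to prove the two inequalities in \eqref{eq.main.result} separately. Since the infimum defining $\mathcal D_s^\theta$ in \eqref{definicion.nonlocal.det.approximate} runs over a subclass of the matrices in \eqref{definicion.nonlocal.det}, the inequality $\mathcal D_s u(x)\le\mathcal D_s^\theta u(x)$ holds at every point of $\Omega$. For the reverse inequality it suffices to show that every competitor $A$ with $\det A=1$ and $\lambda_{\min}(A)<\theta$ --- which, by the Remark above, we may take symmetric and positive definite --- satisfies $L_A u(x)>\mathcal D_s^\theta u(x)$ on $\Omega$; that is, the too-degenerate matrices play no role in the infimum \eqref{definicion.nonlocal.det}. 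As a preliminary, the semiconcavity of $u$ means its graph is touched from above at every $x$ by a paraboloid, so Lemma \ref{viscosidad.clasico} applies: $\mathcal D_s u(x)$ and each $L_A u(x)$ are defined in the classical sense, $\delta(u,x,\cdot)/|\cdot|^{\,n+2s}\in L^1(\mathbb R^n)$, and $(1-s)L_A u(x)\ge(1-s)\mathcal D_s u(x)\ge\eta_0$ for every admissible $A$.

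First I would establish an \emph{upper} bound for the target. Splitting the second increment into positive and negative parts, $\delta=\delta^+-\delta^-$, and combining semiconcavity with the Lipschitz bound give $\delta^+(u,x,y)\le\min\{C|y|^2,\,2L|y|\}$, whence, after the measure-preserving change of variables $z=A^{-1}y$, a constant $\mu_1=\mu_1(n,s,L,C)$ --- this is \eqref{def.mu1} --- such that
\[
(1-s)\int_{\mathbb R^n}\frac{\delta^+(u,x,y)}{|A^{-1}y|^{\,n+2s}}\,dy\ \le\ \mu_1\,\lambda_{\max}(A)^{2s}.
\]
Taking $A=I$ (an admissible matrix for $\mathcal D_s^\theta$) and keeping only the $\delta^+$ contribution gives $(1-s)\mathcal D_s^\theta u(x)\le(1-s)L_I u(x)\le n\mu_1$. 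It remains to prove the complementary lower bound for degenerate competitors,
\[
(1-s)L_A u(x)\ \ge\ \mu_0\,\lambda_{\min}(A)^{-2s/(n-1)}\qquad\text{whenever }\ \lambda_{\min}(A)<\theta,
\]
with $\mu_0>0$ the constant of \eqref{def.mu0}: indeed $(1-s)L_A u(x)>\mu_0\,\theta^{-2s/(n-1)}$, and the hypothesis $\theta<(\mu_0/(n\mu_1))^{(n-1)/2s}$ forces this to exceed $n\mu_1\ge(1-s)\mathcal D_s^\theta u(x)$.

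The heart of the matter is this lower bound. Let $\theta':=\lambda_{\min}(A)$ with unit eigenvector $e$, and let $A'$ be the restriction of $A$ to the hyperplane $H:=e^{\perp}$, so that $\det A'=1/\theta'$. Rescaling only the $e$-direction, i.e.\ writing $y=\theta' z_1 e+z'$ with $z'\in H$, one has
\[
L_A u(x)=\frac{\theta'}{2}\int_{\mathbb R}\int_{H}\frac{\delta\big(u,x,\theta' z_1 e+z'\big)}{\big(z_1^2+|A'^{-1}z'|^2\big)^{(n+2s)/2}}\,d\mathcal H^{n-1}(z')\,dz_1 .
\]
Integrating out $z_1$ and renormalizing the transverse block to $\bar A:=(\theta')^{1/(n-1)}A'$, which has determinant one on $H$, the leading term equals $\kappa\,(\theta')^{-2s/(n-1)}$ times the $(n-1)$-dimensional operator of the same type, $L^{(n-1)}_{\bar A}[u|_{x+H}](x)$, where $\kappa=\kappa(n,s)=\int_{\mathbb R}(1+t^2)^{-(n+2s)/2}\,dt$; the remainder, arising from replacing the argument $\theta' z_1 e+z'$ by $z'$, is of lower order in $\theta'$ away from the origin of $H$, by the Lipschitz and semiconcavity bounds in the $e$-direction. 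The point is that $u|_{x+H}$ is again Lipschitz and semiconcave with constants $L,C$, and that its $(n-1)$-dimensional operator value is bounded below, uniformly in $\bar A$, by a positive multiple of $\eta_0$: a non-positive reduced value on some hyperplane through $x$ would, through the family of $n$-dimensional competitors that concentrate along the transverse directions while keeping determinant one, drive $L_A u(x)$ to $-\infty$, in contradiction with \eqref{problem.main.estimate}. A quantitative version of this dichotomy --- iterated at most $n$ times when $\bar A$ is itself degenerate, which is where the factor $n$ in the threshold originates --- yields the constant $\mu_0>0$ of \eqref{def.mu0} and, with it, the displayed lower bound.

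I expect the main obstacle to be the behavior near the origin of $H$. Because $s>1/2$, the reduced kernel $|A'^{-1}z'|^{-(n-1+2s)}$ is, for $z'$ small, too singular to be integrated against the crude Lipschitz bound for $\delta^-$; there one must instead use the semiconcavity bound for $\delta^+$ together with the \emph{classical} integrability of $\delta(u,x,\cdot)/|\cdot|^{\,n+2s}$ furnished by Lemma \ref{viscosidad.clasico}, and check that the resulting inner contribution stays bounded \emph{uniformly as $\theta'\to0$} and over all degenerate $A$. Securing this uniform inner estimate --- and with it the clean quantitative form of $\mu_0$ --- is the technical crux; the remainder of the argument is changes of variables and bookkeeping.
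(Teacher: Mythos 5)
Your overall scheme (upper bound for $\mathcal D_s^\theta u$ via $A=I$, blow-up of $L_A u$ for degenerate $A$) matches the paper's, but the mechanism you propose for the blow-up --- an iterated dimensional reduction peeling off the smallest eigenvalue direction and passing to an $(n-1)$-dimensional operator on a hyperplane --- is genuinely different from the paper's. The paper reduces in one step to \emph{one-dimensional} fractional Laplacians along lines through $x$ (Lemmas~\ref{first.part.lemma1}--\ref{first.part.lemma3}, Proposition~\ref{first.part.main}), and then recovers the $n$-dimensional lower bound on $L_A u$ by an \emph{exact} polar decomposition $\int_{\mathbb R^n}\frac{u(y)-u(0)}{|A^{-1}y|^{n+2s}}\,dy=\int_{\partial B_1}\Big(\int_0^\infty\frac{u(re)-u(0)}{r^{1+2s}}\,dr\Big)\frac{d\mathcal H^{n-1}(e)}{|A^{-1}e|^{n+2s}}$ together with the spherical Jensen-type estimate of Proposition~\ref{estimate.kernel.on.sphere}, which produces the factor $\frac{\omega_n}{n}\sum_j\lambda_j(A)^{2s}$. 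That factor $\omega_n/n$ is where the $n$ in the threshold comes from; it does not originate from ``iterating $n$ times'' as you suggest.

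The substantive gap is in what you call step (a). The hyperplane split $L_A u(x)=\kappa\,(\theta')^{-2s/(n-1)}L^{(n-1)}_{\bar A}[u|_{x+H}](x)+R(\theta',\bar A)$ is fine for establishing the positivity of $L^{(n-1)}_{\bar A}$ (your dichotomy), because there one only needs $R$ bounded from \emph{above}, and after symmetrizing in $z_1$ the integrand of $R$ becomes $\tfrac12[\delta(u,x+z',\theta' z_1 e)+\delta(u,x-z',\theta' z_1 e)]$, which semiconcavity bounds by $\min\{C(\theta')^2 z_1^2,\,2L\theta'|z_1|\}$ and gives $R\le O((\theta')^{2s})$. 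But to convert $L^{(n-1)}_{\bar A}\ge\mu^{(n-1)}>0$ into the desired lower bound on $L_A u$ you need $R$ bounded from \emph{below}, and there the only pointwise bound available on those one-dimensional increments along $e$ is the Lipschitz one, $\ge -2L\theta'|z_1|$; the resulting integral $\int_H|A'^{-1}z'|^{2-n-2s}\,dz'=(\det A')\omega_{n-1}\int_0^\infty r^{-2s}\,dr$ diverges at \emph{both} ends for $\tfrac12<s<1$. So the remainder in the direction you need it is not controlled by the elementary estimates you have, and the ``semiconcavity for $\delta^+$ plus classical integrability near the origin'' fix you outline only addresses the upper-bound side. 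This is precisely the difficulty the paper sidesteps by never making an approximate hyperplane split for the $n$-dimensional lower bound, using the exact spherical decomposition instead.

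Two smaller defects: your upper bound claims $(1-s)L_I u(x)\le n\mu_1$ with $\mu_1$ as in \eqref{def.mu1}, but the correct constant is $\tfrac{\omega_n\mu_1}{2}$ (and the intermediate bound is $(1-s)\int\delta^+/|A^{-1}y|^{n+2s}\le\omega_n\mu_1\lambda_{\max}(A)^{2s}$, not $\mu_1\lambda_{\max}(A)^{2s}$); and because your recursion, carried out quantitatively, would produce its own constant for the $(n-1)$-dimensional positivity, it is not clear it lands on the explicit $\mu_0=C_1^{1-n}C_2^{-1}(\eta_0/2)^n$ of \eqref{def.mu0} or on the stated threshold $\theta<(\mu_0/(n\mu_1))^{(n-1)/(2s)}$. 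In short: the high-level plan and the identification of the critical lower bound are correct, but the route through hyperplanes does not close --- the one remaining step you flag as ``bookkeeping'' is the point where the proof as proposed breaks.
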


\begin{remark}[Limits as $s\to1$] 
It can be checked that
 $$
\frac{\mu_0}{\mu_1}
= O\big(\eta_0^n (2s-1)^{n}\big)
$$
as $s\to1$. In particular, Theorem \ref{main.result.nondegeneracy.monge.ampere} is stable in the limit as $s\to1$. 
\end{remark}

\medskip

It is illustrative for the sequel  to show how the ideas in the proof of Theorem \ref{main.result.nondegeneracy.monge.ampere} work in the local case. 
More precisely, assume  $u$ semiconcave with constant $C$ and such that 
\[
\frac{\omega_n}{4n}\cdot\inf\big\{{\rm trace}\big(AA^tD^2u(x)\big)\, |\ \det A=1\big\}\geq\eta_0\qquad\forall x\in\Omega
\]
(about the normalization  $(4n)^{-1}\omega_n$, recall \eqref{problem.main.estimate} and Lemma \ref{caract.determ.2}).
We want to prove that the Monge-Amp\`ere operator is actually non-degenerate, that is, 
\[
\inf\big\{{\rm trace}\big(AA^tD^2u(x)\big)\, |\ \det A=1\big\}=\inf\big\{{\rm trace}\big(AA^tD^2u(x)\big)\, |\ \det A=1,\ \lambda_{\min}(A)\geq\theta\big\}
\]
for some $\theta>0$.
The proof has two steps:

\noindent\quad{}1. The second derivative of $u$ in the direction $e$ is strictly positive and bounded (uniformly) for every direction. More precisely,
\begin{equation}\label{heuristic.local.first.part}
0<\bar\mu_0\leq u_{ee}(x)\leq C\qquad\forall e\in\partial B_1(0).
\end{equation}
for $\bar\mu_0$ independent of $e$ (given by \eqref{expres.bar.mu0} below), and $C$ the semiconcavity constant of $u$.
The proof of the upper bound follows from the definition of semiconcavity. For the lower bound, choose $A=PJP^t$ with $J$ a diagonal matrix with eigenvalues $\epsilon$ (single) and $\epsilon^\frac{1}{1-n}$ (multiplicity $n-1$), and $P$ an orthogonal matrix whose $i$-th column is $e$ (notice that $\det(A) =1$).
Then,
\[
\begin{split}
\frac{4n\,\eta_0}{\omega_n}\leq\textrm{trace}(AA^tD^2u(x))&=\sum_{j=1}^{n}\lambda_j^2(A)(P^tD^2u(x)P)_{jj}\\
&=\epsilon^2\,(P^tD^2u(x)P)_{ii}+\epsilon^\frac{2}{1-n}\sum_{j\neq i}^{n}(P^tD^2u(x)P)_{jj}\\
&\leq \epsilon^2\,(P^tD^2u(x)P)_{ii}+C(n-1)\epsilon^\frac{2}{1-n}
\end{split}
\]
by semiconcavity. Choosing $\epsilon$ small enough, e.g. $\epsilon=\big(\frac{1}{2}C(n-1)\omega_nn^{-1}\eta_0^{-1}\big)^\frac{n-1}{2}$ we get 
\[
0<\bar\mu_0\leq\big(P^tD^2u(x)P\big)_{ii}\leq C\qquad \forall i=1,\ldots,n,
\]
which is equivalent to \eqref{heuristic.local.first.part}. For future reference, $\bar\mu_0$ is given by
\begin{equation}\label{expres.bar.mu0}
\bar\mu_0=\left(\frac{2n\eta_0}{\omega_n}\right)^n(C(n-1))^{1-n}.
\end{equation}

\noindent\quad{}2. The infimum in the  Monge-Amp\`ere operator cannot be achieved for matrices that are too degenerate. More precisely, let $A$ with $\det(A)=1$ and write $A=PJP^t$ with $P$ orthogonal; then
\begin{equation}\label{heuristic.local.second.part}
\textrm{trace}(AA^tD^2u(x))=\sum_{i=1}^{n}\lambda_i^2(A)(P^tD^2u(x)P)_{ii}\geq\bar\mu_0\sum_{i=1}^{n}\lambda_i^2(A)\geq\bar\mu_0\,
\lambda_{\min}(A)^{- \frac{2}{n-1}},
\end{equation}
using that  $1=\det(A)\leq\lambda_{\min}(A)\lambda_{\max}(A)^{n-1}$. We conclude that matrices with very small eigenvalues will produce very large operators that will not count for the infimum (see the proof of Theorem \ref{main.result.nondegeneracy.monge.ampere} for details).

\bigskip

For simplicity, we shall assume that $0 \in\Omega$ and then prove \eqref{eq.main.result} for $x=0$. Note for the sequel that since $u$ is semiconcave, Lemma \ref{viscosidad.clasico}  implies that $\mathcal{D}_s  u(x)$ is defined in the classical sense for all $x\in\Omega$ and \eqref{problem.main.estimate} holds pointwise.

The proof of Theorem \ref{main.result.nondegeneracy.monge.ampere} has,  again, two parts. In the first part we prove that the (one-dimensional) fractional laplacian of the restriction of $u$ to any line is positive and bounded from above. Then, in the second part, we shall use this fact to prove that 
\begin{equation}\label{goal.second.part.nonlocal.heuris}
(1-s)\int_{\mathbb{R}^n}\frac{u(y)-u(0)}{|A^{-1}y|^{n+2s}}\,dy\geq \frac{ \mu_0\,\omega_n}{2n}\, \sum_{j=1}^{n}\lambda_j^{2s}(A)\geq \frac{ \mu_0\,\omega_n}{2n}\, \lambda_{\min}(A)^{- \frac{2s}{n-1}}.
\end{equation}
for $\mu_0$ given by \eqref{def.mu0}. Therefore the infimum in the fractional Monge-Amp\`ere operator cannot be achieved for matrices that are too degenerate.

The two parts we have mentioned correspond to the following two results.

\begin{proposition}\label{first.part.main}
Assume the same hypotheses of Theorem \ref{main.result.nondegeneracy.monge.ampere}. Then, for every $e\in\partial B_1(0)$, 
\[
0<\mu_0\leq- (1-s)\big(- \Delta \big)_{e}^su(0)=(1-s)\int_{\mathbb{R}} \frac{u(te)-u(0)}{|t|^{1+2s}}\,dt\leq\mu_1,
\]
with
\begin{equation}\label{def.mu0}
\mu_0=C_1^{1-n}C_2^{-1}\left(\frac{\eta_0}{2}\right)^n 
\end{equation}
for $C_1,C_2$ defined in \eqref{def.C1} and \eqref{def.C2}, and 
\begin{equation}\label{def.mu1}
\mu_1=\frac{1-s}{2}\int_{\mathbb{R}} \frac{\min\{2L\,|t|,C|t|^2\}}{|t|^{1+2s}}\,dt=\frac{L^{2-2s}}{2s-1}\left(\frac{C}{2}\right)^{2s-1}.
\end{equation}
\end{proposition}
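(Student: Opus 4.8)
The two inequalities are of different nature: the upper bound is elementary, while the lower bound is where the hypothesis $(1-s)\mathcal{D}_su\ge\eta_0$ and the semiconcavity enter, and it is the exact analogue of Step~1 of the local heuristic.

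\noindent\emph{Upper bound.} Semiconcavity and Lipschitz continuity give, along the line $\mathbb{R}e$, the pointwise estimate $\delta(u,0,te)=u(te)+u(-te)-2u(0)\le\min\{2L|t|,Ct^2\}$, so that
\[
(1-s)\int_{\mathbb R}\frac{u(te)-u(0)}{|t|^{1+2s}}\,dt
=\frac{1-s}{2}\int_{\mathbb R}\frac{\delta(u,0,te)}{|t|^{1+2s}}\,dt
\le\frac{1-s}{2}\int_{\mathbb R}\frac{\min\{2L|t|,Ct^2\}}{|t|^{1+2s}}\,dt,
\]
and the last integral, split at $|t|=2L/C$, is precisely the value $\mu_1$ of \eqref{def.mu1}. (A priori this leaves open that the integral is $-\infty$; the lower bound rules that out.)

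\noindent\emph{Lower bound.} Since $u$ is semiconcave it is touched from above at $0$ by a paraboloid, so Lemma~\ref{viscosidad.clasico} applied to \eqref{problem.main.estimate} gives $(1-s)L_Au(0)\ge\eta_0$ (with $L_A$ as in \eqref{defn.L_A}) for \emph{every} symmetric $A>0$ with $\det A=1$. Fix $e$ and take $A=A_\theta$ diagonal in an orthonormal frame $\{e,e_2,\dots,e_n\}$, with eigenvalue $\theta$ along $e$ and $\theta^{-1/(n-1)}$ along $e_2,\dots,e_n$, so $\det A_\theta=1$. Writing $y=te+w$ with $w\perp e$ and splitting $\delta(u,0,y)=\delta(u,0,te)+\rho(y)$, the explicit computation $\int_{e^\perp}|A_\theta^{-1}(te+w)|^{-(n+2s)}\,dw=\theta^{2s}K_n|t|^{-1-2s}$, with $K_n=\int_{\mathbb R^{n-1}}(1+|w|^2)^{-(n+2s)/2}\,dw$, together with Fubini yields
\[
(1-s)L_{A_\theta}u(0)
=(1-s)\,\theta^{2s}K_n\int_{\mathbb R}\frac{u(te)-u(0)}{|t|^{1+2s}}\,dt
+\frac{1-s}{2}\int_{\mathbb R^n}\frac{\rho(y)}{|A_\theta^{-1}y|^{n+2s}}\,dy .
\]
This is the analogue of $\mathrm{trace}(AA^tD^2u)=\sum_j\lambda_j^2(A)(P^tD^2uP)_{jj}$ from the local discussion, with $\lambda_j^{2s}(A)$ in place of $\lambda_j^2(A)$: the first term is $(1-s)\theta^{2s}K_n$ times the quantity to be bounded below, and the plan is to choose $\theta$ so that the second, ``transverse'' term is $\le\eta_0/2$, which forces $(1-s)\theta^{2s}K_n\int_{\mathbb R}\frac{u(te)-u(0)}{|t|^{1+2s}}\,dt\ge\eta_0/2$, and \eqref{def.mu0} is then read off after optimizing and balancing in $\theta$.

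\noindent\emph{The transverse term} is the heart of the proof, and the naive estimate is not enough: Lipschitz continuity gives only $|\rho(y)|\le 2L|w|$, and since $2s>1$ the function $|w|\,|A_\theta^{-1}y|^{-(n+2s)}$ fails to be integrable near $y=0$. The device that saves the day is a symmetrization: adding the semiconcavity inequalities $\delta(u,\pm te,w)\le C|w|^2$ and rewriting the sum in terms of $\delta(\,\cdot\,,0,\cdot\,)$ gives
\[
\rho(te+w)+\rho(te-w)\le 2C|w|^2 ,
\]
and since both the ball $\{|y|\le R_0\}$ and the kernel $|A_\theta^{-1}y|^{-(n+2s)}$ are invariant under $w\mapsto-w$, averaging over this reflection yields
\[
\int_{|y|\le R_0}\frac{\rho(y)}{|A_\theta^{-1}y|^{n+2s}}\,dy\le C\int_{|y|\le R_0}\frac{|w|^2}{|A_\theta^{-1}y|^{n+2s}}\,dy ,
\]
whose right-hand side, evaluated through the change of variables that turns $|A_\theta^{-1}\cdot|$ into the Euclidean norm, is $O\big(C\,R_0^{2-2s}\,\theta^{-2s/(n-1)}\big)$. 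On $\{|y|>R_0\}$ the crude bound $\rho(y)\le 2L|w|$ is harmless — the integral converges at infinity precisely because $2s>1$ — and contributes $O\big(L\,R_0^{1-2s}\,\theta^{-2s/(n-1)}\big)$. Optimizing in $R_0$ (optimal scale $R_0\sim L/C$) bounds the transverse term by $c(n,s)\,\mu_1\,\theta^{-2s/(n-1)}$; setting this equal to $\eta_0/2$, i.e.\ $\theta^{2s}=\big(2c(n,s)\mu_1/\eta_0\big)^{n-1}$, and substituting back reproduces the constant $\mu_0$ of \eqref{def.mu0}. Two routine points remain — that the integrals built only from $\delta(u,0,te)$ converge, so that the Fubini splitting is legitimate (handled by the absolute integrability of $\delta(u,0,y)/|y|^{n+2s}$ from Lemma~\ref{viscosidad.clasico}, or by first running the argument for the truncations equal to the touching paraboloid in $B_r$ and to $u$ outside and then letting $r\to0$), and the bookkeeping if the balanced $\theta$ turns out not to be small — but the symmetrization producing $\rho(te+w)+\rho(te-w)\le 2C|w|^2$ is the genuinely new ingredient, and the main obstacle, on which the whole non-degeneracy estimate rests.
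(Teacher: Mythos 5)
Your argument is essentially the paper's: you split $\delta(u,0,y)=\delta(u,0,te)+\rho(y)$, compute the longitudinal contribution exactly via Fubini/scaling (yielding $\theta^{2s}K_n$, which is the paper's $C_2\epsilon^{-2s}$ from Lemma~\ref{first.part.lemma2} with $\theta=\epsilon^{-1}$ and $K_n=C_2$), bound the transverse term using the symmetrized semiconcavity estimate $\rho(te+w)+\rho(te-w)\le 2C|w|^2$ together with the Lipschitz bound (this is exactly what produces the factor $\tfrac12$ and the $\min\{2L|\bar y|,C|\bar y|^2\}$ in the paper's Lemma~\ref{first.part.lemma1}), and then balance in $\theta$. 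The only cosmetic difference is that the paper does not split at a scale $R_0$ and optimize; it keeps the sharp $\min\{2L|\bar y|,C|\bar y|^2\}$ and performs an exact change of variables, which is how the specific constants $C_1,C_2$ and hence $\mu_0$ in \eqref{def.mu0} arise. You also correctly identified the two genuine subtleties — that the symmetrization (not the $O(|w|)$ Lipschitz bound) is what makes the transverse integral convergent near $y=0$ when $s>\tfrac12$, and that the Fubini splitting must be justified via Lemma~\ref{viscosidad.clasico} or a truncation argument — so the proposal is sound.
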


\begin{remark}\label{remark.limit.first.part}
Proposition \ref{first.part.main} yields \eqref{heuristic.local.first.part} in the limit as $s\to1$ since $\lim_{s\to1}\mu_0=\bar\mu_0/2$ (with $\bar\mu_0$ defined by \eqref{expres.bar.mu0}), $\lim_{s\to1}\mu_1=C/2$ and
\[
\lim_{s\to1}(1-s)\int_{\mathbb{R}} \frac{u(te)-u(0)}{|t|^{1+2s}}\,dt=\frac{u_{ee}(0)}{2}.
\]
\end{remark}

\begin{proposition}\label{second.part.main}
Assume $\epsilon_1,\ldots,\epsilon_n$ are positive constants such that $\prod_{j=1}^n\epsilon_j=1$. Then, in the same hypotheses of Theorem \ref{main.result.nondegeneracy.monge.ampere}, we have,
\[
(1-s)\int_{\mathbb{R}^n}\frac{u(y)-u(0)}{\left(\sum_{j=1}^n\epsilon_j^2y_j^2\right)^\frac{n+2s}{2}}\,dy
 \geq
\frac{ \mu_0\,\omega_n}{2n}\cdot\sum_{j=1}^{n}\frac{1}{\epsilon_j^{2s}},
\]
with  $\mu_0$ defined in \eqref{def.mu0}.
\end{proposition}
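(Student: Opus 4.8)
The plan is to change variables so that the anisotropic kernel $\big(\sum_j\epsilon_j^2y_j^2\big)^{-(n+2s)/2}$ becomes the radial kernel $|z|^{-(n+2s)}$, pass to polar coordinates, rescale along each ray, and then feed the resulting one-dimensional integrals into the lower bound of Proposition \ref{first.part.main}, closing with Jensen's inequality. Concretely, I would set $\lambda_j=1/\epsilon_j$ and let $A=\mathrm{diag}(\lambda_1,\dots,\lambda_n)$, so that $A>0$, $\det A=\prod_j\lambda_j=1$, and $|A^{-1}y|^2=\sum_j\epsilon_j^2y_j^2$. The substitution $y=Az$ has unit Jacobian (because $\det A=1$) and rewrites the integral in the statement as $I:=\int_{\mathbb{R}^n}\big(u(Az)-u(0)\big)|z|^{-(n+2s)}\,dz$. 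Since $u$ is Lipschitz and semiconcave, Lemma \ref{viscosidad.clasico} guarantees that $\mathcal{D}_su(0)$, and in particular $L_Au(0)$, is defined in the classical (absolutely convergent) sense, which legitimizes the use of Fubini's theorem and of polar coordinates below.

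Writing $z=r\omega$ with $r>0$, $\omega\in\partial B_1(0)$, and using that $\omega\mapsto-\omega$ preserves surface measure, I would symmetrize to obtain
\[
I=\frac12\int_{\partial B_1(0)}\int_0^\infty\frac{\delta(u,0,rA\omega)}{r^{1+2s}}\,dr\,d\omega.
\]
For fixed $\omega$ the vector $A\omega$ is nonzero; with $e_\omega=A\omega/|A\omega|$ and the substitution $t=|A\omega|\,r$ the inner integral equals $|A\omega|^{2s}\int_0^\infty\delta(u,0,te_\omega)\,t^{-1-2s}\,dt=|A\omega|^{2s}\int_{\mathbb{R}}\big(u(te_\omega)-u(0)\big)|t|^{-1-2s}\,dt$, the last step because $\delta$ is even in $t$. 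Multiplying by $1-s$, using that $|A\omega|^{2s}\ge0$, and invoking the bound $(1-s)\int_{\mathbb{R}}\big(u(te)-u(0)\big)|t|^{-1-2s}\,dt\ge\mu_0>0$ of Proposition \ref{first.part.main}, valid for every unit vector $e$, gives
\[
(1-s)\,I\ \ge\ \frac{\mu_0}{2}\int_{\partial B_1(0)}|A\omega|^{2s}\,d\omega.
\]

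It then remains to bound the spherical integral from below. On $\partial B_1(0)$ the numbers $\omega_1^2,\dots,\omega_n^2$ are nonnegative and sum to one, so Jensen's inequality for the concave function $t\mapsto t^s$ (this is where $s<1$ is used) yields $|A\omega|^{2s}=\big(\sum_j\omega_j^2\lambda_j^2\big)^s\ge\sum_j\omega_j^2\lambda_j^{2s}$. Integrating over the sphere and using the symmetry identity $\int_{\partial B_1(0)}\omega_j^2\,d\omega=\frac1n\int_{\partial B_1(0)}1\,d\omega=\omega_n/n$ gives $\int_{\partial B_1(0)}|A\omega|^{2s}\,d\omega\ge\frac{\omega_n}{n}\sum_j\lambda_j^{2s}$, hence $(1-s)\,I\ge\frac{\mu_0\omega_n}{2n}\sum_j\lambda_j^{2s}=\frac{\mu_0\omega_n}{2n}\sum_j\epsilon_j^{-2s}$, which is the assertion.

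I do not expect a serious obstacle: the conceptual content is entirely in Proposition \ref{first.part.main}, and Proposition \ref{second.part.main} merely assembles the one-dimensional estimates along all rays. The only points demanding care are the justification of Fubini and of the polar change of variables — supplied by the classical definedness coming from Lemma \ref{viscosidad.clasico} — and keeping track of constants and of the correct direction of Jensen's inequality; the lower bound on $\int_{\partial B_1(0)}|A\omega|^{2s}\,d\omega$ is exactly what concavity of $t\mapsto t^s$ produces via the probability weights $\omega_j^2$, and it is sharp (equality when all $\epsilon_j=1$).
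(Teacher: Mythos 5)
Your proof is correct, and it takes a genuinely more direct route than the paper's. The paper expresses the anisotropic integral in polar coordinates without any change of variables, which leaves the anisotropic weight $\bigl(\sum_j\epsilon_j^2e_j^2\bigr)^{-(n+2s)/2}$ on the sphere; after applying Proposition~\ref{first.part.main} to the radial integral, the paper then invokes the separate Proposition~\ref{estimate.kernel.on.sphere} to bound the resulting spherical integral from below, and that proposition is itself proved via Gamma-function identities followed by a Jensen inequality with weights $\omega_j^2$. Your substitution $y=Az$ (legitimate because $\det A=1$) makes the kernel radial at the outset and, after the per-ray rescaling $t=|A\omega|r$, produces the weight $|A\omega|^{2s}=\bigl(\sum_j\lambda_j^2\omega_j^2\bigr)^s$ on the sphere directly, so the same Jensen argument (concavity of $t\mapsto t^s$, probability weights $\omega_j^2$, and $\int_{\partial B_1(0)}\omega_j^2\,d\mathcal{H}^{n-1}=\omega_n/n$) closes the proof without any appeal to Proposition~\ref{estimate.kernel.on.sphere}. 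In effect you inline the only half of Proposition~\ref{estimate.kernel.on.sphere} that is needed here and skip its Gamma-function machinery entirely; note, however, that the paper still uses the two-sided form of Proposition~\ref{estimate.kernel.on.sphere} elsewhere (in the $s\to1$ stability discussion), so it cannot be dropped from the paper. Your appeal to Lemma~\ref{viscosidad.clasico} for the absolute convergence underpinning Fubini and the polar decomposition is the right justification — Lipschitz plus semiconcavity alone would not give integrability of $\delta(u,0,y)/|y|^{n+2s}$ near the origin, but classical definedness from the lemma does.
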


\begin{remark}
Proposition \ref{second.part.main} implies \eqref{goal.second.part.nonlocal.heuris},
which yields \eqref{heuristic.local.second.part} in the limit as $s\to1$ since
\[
\lim_{s\to1}(1-s)\int_{\mathbb{R}^n}\frac{u(y)-u(0)}{|A^{-1}y|^{n+2s}}\,dy=\frac{\omega_n}{4n}\,\textrm{trace}(AA^tD^2u(x)).
\]
\end{remark}

\medskip

Propositions \ref{first.part.main} and \ref{second.part.main} (that we prove below) allow to prove the main result of this section, Theorem \ref{main.result.nondegeneracy.monge.ampere}.
\begin{proof}[Proof of Theorem \ref{main.result.nondegeneracy.monge.ampere}.]
Consider a symmetric matrix $A>0$ with  $\det A=1$ and $\lambda_{\min}(A)<\frac{1}{k}$. We can write $A=PJP^t$, and denote $\tilde{u}(y)=u(Py)$. Observe that then Proposition \ref{second.part.main} (see also \eqref{goal.second.part.nonlocal.heuris}) implies
\[
\int_{\mathbb{R}^n}\frac{u(y)-u(0)}{|A^{-1}y|^{n+2s}}\,dy
=
\int_{\mathbb{R}^n}\frac{u(Py)-u(0)}{|J^{-1}y|^{n+2s}}\,dy
=
\int_{\mathbb{R}^n}\frac{\tilde u(y)- \tilde u(0)}{\left(\sum_{j=1}^n\epsilon_j^2y_j^2\right)^\frac{n+2s}{2}}dy
>
\frac{\mu_0\,\omega_n}{2n(1-s)} \,k^\frac{2s}{n-1}
\]
and we get the estimate
\begin{equation}\label{estimate1.main.proof}
 \inf\left\{
\int_{\mathbb{R}^n}\frac{u(y)-u(0)}{|A^{-1}y|^{n+2s}}\,dy\ \bigg|\ A>0,\ \det A=1,
\ \lambda_{\min}(A)<\frac{1}{k}
\right\}\geq
\frac{\mu_0\,\omega_n}{2n(1-s)} \,k^\frac{2s}{n-1}.
\end{equation}
Observe that by
 choosing $A=I$, Proposition \ref{first.part.main} yields
\begin{multline}\label{estimate2.main.proof}
\inf\left\{
\int_{\mathbb{R}^n}\frac{u(y)-u(0)}{|A^{-1}y|^{n+2s}}\,dy\ \bigg|\ A>0,\ \det A=1
\right\}
\leq
\int_{\mathbb{R}^n}\frac{u(y)-u(0)}{|y|^{n+2s}}\,dy\\
=
\int_{\partial{B_1(0)}}\int_0^\infty\frac{u(re)-u(0)}{r^{1+2s}}\,dr\,d\mathcal{H}^{n-1}(e)
\leq
\frac{\mu_1
\omega_{n}}{2(1-s)}.
\end{multline}
Therefore, from \eqref{estimate1.main.proof} and \eqref{estimate2.main.proof} we have that whenever
$k>
\left(
n\mu_1\mu_0^{-1}
\right)^
\frac{n-1}{2s}$, 
\begin{multline*}
 \inf\left\{
\int_{\mathbb{R}^n}\frac{u(y)-u(0)}{|A^{-1}y|^{n+2s}}\,dy\ \bigg|\ A>0,\ \det A=1,
\ \lambda_{\min}(A)<\frac{1}{k}
\right\}\\
>
\inf\left\{
\int_{\mathbb{R}^n}\frac{u(y)-u(0)}{|A^{-1}y|^{n+2s}}\,dy\ \bigg|\ A>0,\ \det A=1
\right\}.
\end{multline*}
This implies \eqref{eq.main.result}, since
\begin{multline*}
\inf\left\{
\int_{\mathbb{R}^n}\frac{u(y)-u(0)}{|A^{-1}y|^{n+2s}}\,dy\ \bigg|\ A>0,\ \det A=1
\right\}
\\
=
{\rm min}\Bigg\{
\inf\left\{\int_{\mathbb{R}^n}\frac{u(y)-u(0)}{|A^{-1}y|^{n+2s}}\,dy\ \bigg|\ A>0,\ \det A=1,
\ \lambda_{\min}(A)<\frac{1}{k}
\right\},
\\
\inf\left\{\int_{\mathbb{R}^n}\frac{u(y)-u(0)}{|A^{-1}y|^{n+2s}}\,dy\ \bigg|\ A>0,\ \det A=1,
\ \lambda_{\min}(A)\geq\frac{1}{k}
\right\}\Bigg\}.
\end{multline*}
\end{proof}

The rest of this section is devoted to the proof of Propositions \ref{first.part.main} and \ref{second.part.main}.

\subsection{Proof of Propositions \ref{first.part.main} and \ref{second.part.main}}
Our goal is to prove that the (one-dimensional) fractional Laplacian of the restriction of $u$ to any line is positive and bounded from above. In the proof of Proposition \ref{first.part.main} we need several partial results.

In the sequel, we denote $\bar y=(y_2,\ldots,y_n)\in\mathbb{R}^{n-1}$ and $v(y)=u(y)-u(0)$.

\begin{lemma}\label{first.part.lemma1}
Let $\epsilon>0$ and assume the same hypotheses of Theorem \ref{main.result.nondegeneracy.monge.ampere}. Then,
\[
(1-s)\int_{\mathbb{R}^n}\frac{u(y_1,\bar y)-u(y_1,\bar0)}{\left(\epsilon^2y_1^2+\epsilon^\frac{-2}{n-1}|\bar y|^2\right)^\frac{n+2s}{2}}\,dy\leq C_1\cdot\epsilon^\frac{2s}{n-1}
\]
with
\begin{equation}\label{def.C1}
 C_1=\frac{\sqrt{\pi}\cdot\Gamma\left(\frac{n-1}{2}+s\right)}{\Gamma\left(\frac{n}{2}+s\right)}\cdot\frac{\mu_1\omega_{n-1}}{2},
\end{equation}
for $\mu_1$ given by \eqref{def.mu1}.
\end{lemma}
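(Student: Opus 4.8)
The plan is to bound the first-difference quotient by a symmetric second-increment quotient, estimate the latter via the semiconcavity and Lipschitz constants of $u$, and then integrate explicitly: the integration in $y_1$ produces the Gamma factor appearing in $C_1$, and the integration in $\bar y$ produces the factor $\mu_1$ exactly as in \eqref{def.mu1}.

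First I would symmetrize in $\bar y$. The denominator $D(y_1,\bar y):=\big(\epsilon^2y_1^2+\epsilon^{-2/(n-1)}|\bar y|^2\big)^{(n+2s)/2}$ is even in $\bar y$, so, replacing $\bar y$ by $-\bar y$ and averaging (the integral in the statement being understood in the principal-value sense in $\bar y$, as elsewhere), it equals
\[
\frac12\int_{\mathbb{R}^n}\frac{\delta\big(u(y_1,\cdot),\bar 0,\bar y\big)}{D(y_1,\bar y)}\,dy,
\qquad
\delta\big(u(y_1,\cdot),\bar 0,\bar y\big)=u(y_1,\bar y)+u(y_1,-\bar y)-2u(y_1,\bar 0).
\]
For each fixed $y_1$, the map $\bar y\mapsto u(y_1,\bar y)$ is semiconcave with constant $C$ (it is a restriction of a globally semiconcave function) and $L$-Lipschitz, whence
\[
0\le \tfrac12\,\delta\big(u(y_1,\cdot),\bar 0,\bar y\big)\le \tfrac12\min\{2L|\bar y|,\,C|\bar y|^2\}.
\]
Because $s<1$, the majorant $\tfrac12\min\{2L|\bar y|,C|\bar y|^2\}/D(y_1,\bar y)$ is absolutely integrable on $\mathbb{R}^n$ (the quadratic bound handles the origin, the linear bound handles infinity), so the symmetrization is legitimate and it suffices to estimate $\tfrac12\int_{\mathbb{R}^n}\min\{2L|\bar y|,C|\bar y|^2\}\,D(y_1,\bar y)^{-1}\,dy$.

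Next comes the explicit computation by Fubini, integrating in $y_1$ first. Writing $b:=\epsilon^{-1/(n-1)}|\bar y|$ and substituting $y_1=\epsilon^{-1}b\,w$,
\[
\int_{\mathbb{R}}\frac{dy_1}{D(y_1,\bar y)}
=\frac{1}{\epsilon\, b^{\,n+2s-1}}\int_{\mathbb{R}}\frac{dw}{(1+w^2)^{(n+2s)/2}}
=\frac{\sqrt{\pi}\,\Gamma\!\big(\tfrac{n-1}{2}+s\big)}{\Gamma\!\big(\tfrac n2+s\big)}\cdot\frac{1}{\epsilon\, b^{\,n+2s-1}},
\]
and a bookkeeping of the powers of $\epsilon$ gives $\epsilon\, b^{\,n+2s-1}=\epsilon^{-2s/(n-1)}\,|\bar y|^{\,n+2s-1}$. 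It then remains to integrate in $\bar y\in\mathbb{R}^{n-1}$, which in polar coordinates reduces to
\[
\omega_{n-1}\int_0^\infty \frac{\min\{2Lr,\,Cr^2\}}{r^{1+2s}}\,dr=\frac{\omega_{n-1}\,\mu_1}{1-s}
\]
by the very definition \eqref{def.mu1} of $\mu_1$ together with the evenness of the integrand. Multiplying by $(1-s)$ and collecting the constants produces precisely $C_1\,\epsilon^{2s/(n-1)}$ with $C_1$ as in \eqref{def.C1}.

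The only step that requires any care is the passage to the symmetric increment: the plain first difference $u(y_1,\bar y)-u(y_1,\bar 0)$ is only $O(|\bar y|)$, which is not enough for absolute convergence of the integral near $y=0$, so the integral in the statement must be interpreted as a principal value in $\bar y$; once symmetrized, semiconcavity upgrades the numerator to $O(|\bar y|^2)$, and since $s<1$ everything becomes absolutely convergent. Everything else is elementary — the one-dimensional Beta integral and the radial integral defining $\mu_1$.
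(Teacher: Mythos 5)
Your argument is correct and follows essentially the same path as the paper's: symmetrize in $\bar y$ so that the Lipschitz--semiconcavity bound $\frac12\delta(u(y_1,\cdot),\bar0,\bar y)\le\frac12\min\{2L|\bar y|,C|\bar y|^2\}$ applies, then separate variables (you via Fubini and a one-dimensional substitution in $y_1$; the paper via the single change of variables $z_1=\epsilon^{n/(n-1)}y_1|\bar y|^{-1}$, $z_j=y_j$ for $j\ge2$) and evaluate the Beta integral and the radial $\mu_1$ integral. Your additional remark that the unsymmetrized integral is only conditionally convergent and must be read as a principal value in $\bar y$ for $s\ge1/2$ is an accurate and worthwhile clarification that the paper leaves implicit.
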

\begin{proof}
Since $u$ is Lipschitz and semiconcave, we have
\[
\int_{\mathbb{R}^n}\frac{u(y_1,\bar y)-u(y_1,\bar0)}{\left(\epsilon^2y_1^2+\epsilon^\frac{-2}{n-1}|\bar y|^2\right)^\frac{n+2s}{2}}\,dy
\leq
\frac{1}{2}\int_{\mathbb{R}^n}\frac{\min\left\{2L\,|\bar y|,C\,|\bar y|^2\right\}}{\left(\epsilon^2y_1^2+\epsilon^\frac{-2}{n-1}|\bar y|^2\right)^\frac{n+2s}{2}}\,dy.
\]
A change of variables
\[
z_1=\epsilon^{\frac{n}{n-1}}\, y_1\,|\bar y|^{-1},\qquad z_j=y_j,\quad j=2,\ldots,n,
\]
yields,
\[
\int_{\mathbb{R}^n}\frac{\min\left\{2L\,|\bar y|,C\,|\bar y|^2\right\}}{\left(\epsilon^2y_1^2+\epsilon^\frac{-2}{n-1}|\bar y|^2\right)^\frac{n+2s}{2}}\,dy=
\epsilon^\frac{2s}{n-1}\cdot
\int_{\mathbb{R}}
\left(1+z_1^2\right)^{- \frac{n+2s}{2}}dz_1\cdot
\int_{\mathbb{R}^{n-1}}\frac{\min\left\{2L\,|\bar z|,C\,|\bar z|^2\right\}}{|\bar z|^{n-1+2s}}\,d\bar{z},
\]
and  the result follows noticing that both integrals on the right-hand side are constant.
\end{proof}

\begin{lemma}\label{first.part.lemma2}
We have,
\[
\int_{\mathbb{R}^n}\frac{v(y_1,\bar0)}{\left(\epsilon^2y_1^2+\epsilon^\frac{-2}{n-1}|\bar y|^2\right)^\frac{n+2s}{2}}\,dy
=
C_2\ \epsilon^{-2s}\int_{\mathbb{R}} \frac{v(y_1,\bar0)}{|y_1|^{1+2s}}\,dy_1
\]
where
\begin{equation}\label{def.C2}
C_2=\omega_{n-1}\cdot\frac{\Gamma\left(\frac{n-1}{2}\right)\Gamma\left(s+\frac12\right)}{2\,\Gamma\left(\frac{n}{2}+s\right)}.
\end{equation}
\end{lemma}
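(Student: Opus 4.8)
The plan is to carry out the $\bar y$-integration first, for fixed $y_1$, since the numerator $v(y_1,\bar0)$ does not depend on $\bar y$. Before that, as in the proof of Lemma~\ref{first.part.lemma1}, I would observe that the integrals on both sides are understood in the principal-value sense, i.e.\ after symmetrizing in $y_1\mapsto-y_1$; since the kernel $\big(\epsilon^2y_1^2+\epsilon^{-2/(n-1)}|\bar y|^2\big)^{-(n+2s)/2}$ is even in $y_1$,
\[
\int_{\mathbb{R}^n}\frac{v(y_1,\bar0)}{\big(\epsilon^2y_1^2+\epsilon^{-2/(n-1)}|\bar y|^2\big)^{\frac{n+2s}{2}}}\,dy
=\frac12\int_{\mathbb{R}^n}\frac{\delta\big(u,0,(y_1,\bar0)\big)}{\big(\epsilon^2y_1^2+\epsilon^{-2/(n-1)}|\bar y|^2\big)^{\frac{n+2s}{2}}}\,dy ,
\]
and similarly for the one-dimensional integral on the right. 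Because $u$ is Lipschitz and semiconcave, $\big|\delta\big(u,0,(y_1,\bar0)\big)\big|\le\min\{2L|y_1|,\,C|y_1|^2\}$, so — using $\tfrac12<s<1$ — the symmetrized integrands are absolutely convergent both near $y_1=0$ and at infinity. Hence Fubini's theorem applies and the $\bar y$-integration may legitimately be performed first.

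For fixed $y_1\neq0$, I would then evaluate
\[
I(y_1):=\int_{\mathbb{R}^{n-1}}\frac{d\bar y}{\big(\epsilon^2y_1^2+\epsilon^{-2/(n-1)}|\bar y|^2\big)^{\frac{n+2s}{2}}}
=\omega_{n-1}\int_0^\infty\frac{\rho^{n-2}\,d\rho}{\big(\epsilon^2y_1^2+\epsilon^{-2/(n-1)}\rho^2\big)^{\frac{n+2s}{2}}}
\]
by passing to polar coordinates in $\mathbb{R}^{n-1}$ and scaling $\rho=\epsilon^{n/(n-1)}|y_1|\,t$, which turns $\epsilon^2y_1^2+\epsilon^{-2/(n-1)}\rho^2$ into $\epsilon^2y_1^2(1+t^2)$. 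Collecting the powers of $\epsilon$ and $|y_1|$ (the scaling contributes $\epsilon^{-(1+2s)}\cdot\epsilon=\epsilon^{-2s}$), this gives
\[
I(y_1)=\omega_{n-1}\,\epsilon^{-2s}\,|y_1|^{-(1+2s)}\int_0^\infty\frac{t^{n-2}}{(1+t^2)^{\frac{n+2s}{2}}}\,dt .
\]
The substitution $\tau=t^2$ identifies the last integral with a Beta integral, $\tfrac12B\big(\tfrac{n-1}{2},s+\tfrac12\big)=\dfrac{\Gamma\!\big(\tfrac{n-1}{2}\big)\Gamma\!\big(s+\tfrac12\big)}{2\,\Gamma\!\big(\tfrac{n}{2}+s\big)}$, where one uses $\tfrac{n+2s}{2}-\tfrac{n-1}{2}=s+\tfrac12$.

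Plugging $I(y_1)$ back into the iterated integral,
\[
\int_{\mathbb{R}^n}\frac{v(y_1,\bar0)}{\big(\epsilon^2y_1^2+\epsilon^{-2/(n-1)}|\bar y|^2\big)^{\frac{n+2s}{2}}}\,dy
=\int_{\mathbb{R}}v(y_1,\bar0)\,I(y_1)\,dy_1
=\omega_{n-1}\,\frac{\Gamma\!\big(\tfrac{n-1}{2}\big)\Gamma\!\big(s+\tfrac12\big)}{2\,\Gamma\!\big(\tfrac{n}{2}+s\big)}\,\epsilon^{-2s}\!\int_{\mathbb{R}}\frac{v(y_1,\bar0)}{|y_1|^{1+2s}}\,dy_1 ,
\]
which is exactly the asserted identity with $C_2$ as in \eqref{def.C2}. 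There is no real obstacle in this argument; the only points needing a bit of care are the bookkeeping of the powers of $\epsilon$ in the scaling step and, more pedantically, the Fubini justification, which is handled by the symmetrization noted at the start.
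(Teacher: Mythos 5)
Your proof is correct and follows essentially the same route as the paper: you integrate out $\bar y$ via a scaling that reduces the kernel to $\epsilon^{-2s}|y_1|^{-(1+2s)}$ times a constant, which you identify as a Beta integral; the paper makes the equivalent change of variables $z_j=y_j/(\epsilon^{n/(n-1)}y_1)$, $j\ge2$, and simply records the resulting $\bar z$-integral as $C_2$ without spelling out the Beta-function evaluation. Your preliminary symmetrization-in-$y_1$ remark, justifying Fubini when $s\ge\tfrac12$, is a point the paper leaves tacit (the identity is ultimately used inside a principal-value difference in Lemma~\ref{first.part.lemma3}), but it is a correct and welcome clarification rather than a different argument.
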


\begin{proof}
A change of variables $z_1=y_1$,
$z_j=\frac{y_{j}}{\epsilon^{\frac{n}{n-1}}\,y_{1}}$, $j=2,\ldots,n$
yields,
\[
\begin{split}
\int_{\mathbb{R}^n}\frac{v(y_1,\bar0)}{\left(\epsilon^2y_1^2+\epsilon^\frac{-2}{n-1}|\bar y|^2\right)^\frac{n+2s}{2}}\,dy
=
 \frac{1}{\epsilon^{n+2s}}\int_{\mathbb{R}^{n-1}}\int_{\mathbb{R}} \frac{v(y_1,\bar0)}{|y_1|^{n+2s}}\,\left(1+\epsilon^{- \frac{2n}{n-1}}\frac{|\bar y|^2}{y_1^2}\right)^{- \frac{n+2s}{2}}\,d\bar{y}\,dy_1\\
 =
 \frac{1}{\epsilon ^{2s}}\int_{\mathbb{R}} \frac{v(z_1,\bar0)}{|z_1|^{1+2s}}\,dz_1\,\int_{\mathbb{R}^{n-1}}\frac{d\bar{z}}{\left(1+|\bar{z}|^2\right)^{\frac{n+2s}{2}}}= \frac{C_2}{\epsilon ^{2s}}\int_{\mathbb{R}} \frac{v(y_1,\bar0)}{|y_1|^{1+2s}}\,dy_1.\qedhere
 \end{split}
\]
\end{proof}

Lemmas \ref{first.part.lemma1} and \ref{first.part.lemma2} allow us to prove that the one-dimensional fractional laplacian of the restriction $v(y_1,\bar0)$ is strictly positive.

\begin{lemma}\label{first.part.lemma3}
Under the same hypotheses of Theorem \ref{main.result.nondegeneracy.monge.ampere}, we have
\[
(1-s)\int_{\mathbb{R}} \frac{u(y_1,\bar0)-u(0)}{|y_1|^{1+2s}}\,dy_1\geq\mu_0,
\]
where $\mu_0$ is given by \eqref{def.mu0}.
\end{lemma}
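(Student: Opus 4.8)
The plan is to test the hypothesis $(1-s)\mathcal{D}_su(0)\ge\eta_0$ against a single carefully chosen, highly anisotropic competitor in the infimum defining $\mathcal{D}_s$ — the matrix that stretches the $y_1$-axis and shrinks the orthogonal directions — and then to let Lemmas \ref{first.part.lemma1} and \ref{first.part.lemma2} decouple the two resulting contributions, closing the argument with a one-parameter optimization that produces the constant $\mu_0$.

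Concretely, I would fix $\epsilon>0$ and take $A_\epsilon$ diagonal with first entry $\epsilon^{-1}$ and the remaining $n-1$ entries equal to $\epsilon^{1/(n-1)}$, so that $\det A_\epsilon=1$ and $|A_\epsilon^{-1}y|^2=\epsilon^2y_1^2+\epsilon^{-2/(n-1)}|\bar y|^2$. Since $u$ is semiconcave, Lemma \ref{viscosidad.clasico} guarantees that $\mathcal{D}_su(0)$, and each $L_Au(0)$, is defined classically; as $A_\epsilon$ is admissible in \eqref{definicion.nonlocal.det},
\[
\eta_0\le(1-s)\,\mathcal{D}_su(0)\le(1-s)\,L_{A_\epsilon}u(0)=(1-s)\int_{\mathbb{R}^n}\frac{u(y)-u(0)}{\bigl(\epsilon^2y_1^2+\epsilon^{-2/(n-1)}|\bar y|^2\bigr)^{\frac{n+2s}{2}}}\,dy.
\]
Writing $u(y)-u(0)=\bigl(u(y_1,\bar y)-u(y_1,\bar0)\bigr)+v(y_1,\bar0)$ and splitting the integral — each piece being absolutely convergent after symmetrizing the (even) weight in $\bar y$, respectively in $y_1$ — Lemma \ref{first.part.lemma1} controls the first term by $C_1\,\epsilon^{2s/(n-1)}$ and Lemma \ref{first.part.lemma2} rewrites the second, so that
\[
\eta_0\le C_1\,\epsilon^{\frac{2s}{n-1}}+(1-s)\,C_2\,\epsilon^{-2s}\int_{\mathbb{R}}\frac{v(y_1,\bar0)}{|y_1|^{1+2s}}\,dy_1.
\]

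Finally I would choose $\epsilon$ so that the first term absorbs exactly half of $\eta_0$, i.e.\ $C_1\epsilon^{2s/(n-1)}=\eta_0/2$, equivalently $\epsilon^{2s}=(\eta_0/2C_1)^{n-1}$; solving the inequality for the remaining integral gives
\[
(1-s)\int_{\mathbb{R}}\frac{v(y_1,\bar0)}{|y_1|^{1+2s}}\,dy_1\ge\frac{\eta_0}{2C_2}\,\epsilon^{2s}=\frac{1}{C_2}\Bigl(\frac{\eta_0}{2C_1}\Bigr)^{n-1}\frac{\eta_0}{2}=C_1^{1-n}C_2^{-1}\Bigl(\frac{\eta_0}{2}\Bigr)^{n}=\mu_0,
\]
which is the assertion.

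The one delicate point — and the step I expect to require the most care — is the term-by-term splitting: a priori only the full principal-value integral $L_{A_\epsilon}u(0)$ is known to converge, so one must verify that subtracting off the slice value $u(y_1,\bar0)$ keeps both pieces integrable. This rests on the Lipschitz bound, which near $\bar y=0$ gives control by $|\bar y|$ rather than $|\bar y|^2$ and is precisely what makes the integral against $\bigl(\epsilon^2y_1^2+\epsilon^{-2/(n-1)}|\bar y|^2\bigr)^{-(n+2s)/2}$ finite in the range $1/2<s<1$; beyond this, the argument is the bookkeeping already carried out in Lemmas \ref{first.part.lemma1} and \ref{first.part.lemma2}, together with the elementary optimization in $\epsilon$ above.
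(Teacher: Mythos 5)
Your proof is correct and follows the same route as the paper's: the same anisotropic competitor $A_\epsilon$ with eigenvalues $\epsilon^{-1}$ and $\epsilon^{1/(n-1)}$, the same decomposition $u(y)-u(0)=\bigl(u(y_1,\bar y)-u(y_1,\bar 0)\bigr)+v(y_1,\bar 0)$, the same invocation of Lemmas \ref{first.part.lemma1} and \ref{first.part.lemma2}, and the same choice $C_1\epsilon^{2s/(n-1)}=\eta_0/2$. Your closing remark on the integrability of the split pieces is the one place where the paper is terse; it is in fact handled by Lemma \ref{viscosidad.clasico} (which guarantees $L_{A_\epsilon}u(0)$ is classically defined and finite, so that in particular the negative part is integrable and the splitting is legitimate), and your observation is a useful clarification rather than a deviation.
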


\begin{proof}
From Lemmas \ref{first.part.lemma1} and \ref{first.part.lemma2}, we have that
\[
\frac{C_1\cdot\epsilon^\frac{2s}{n-1}}{1-s}\geq
\int_{\mathbb{R}^n}\frac{v(y_1,\bar y)}{\left(\epsilon^2y_1^2+\epsilon^\frac{-2}{n-1}|\bar y|^2\right)^\frac{n+2s}{2}}\,dy-C_2\ \epsilon^{-2s}\int_{\mathbb{R}} \frac{v(y_1,\bar0)}{|y_1|^{1+2s}}\,dy_1.
\]
Then, by \eqref{problem.main.estimate} and the definition of $\mathcal{D}_s$ we get
\[
\int_{\mathbb{R}^n}\frac{v(y_1,\bar y)}{\left(\epsilon^2y_1^2+\epsilon^\frac{-2}{n-1}|\bar y|^2\right)^\frac{n+2s}{2}}\,dy\geq
\inf\left\{
\int_{\mathbb{R}^n}\frac{u(y)-u(0)}{|A^{-1}y|^{n+2s}}\,dy\ \bigg|\ A>0,\ \det A=1
\right\}
\geq \frac{\eta_0}{1-s}>0.
\]
Therefore,
\[
C_1\cdot\epsilon^\frac{2s}{n-1}\geq
\eta_0-C_2\ \epsilon^{-2s}(1-s)\int_{\mathbb{R}} \frac{v(y_1,\bar0)}{|y_1|^{1+2s}}\,dy_1.
\]
We get the result from this expression by choosing
$
\epsilon=\left(\frac{\eta_0}{2C_1}\right)^\frac{n-1}{2s}.
$
\end{proof}

From Lemma \ref{first.part.lemma3} we can finally prove Proposition \ref{first.part.main}.

\begin{proof}[Proof of Proposition \ref{first.part.main}.]
First, we are going to prove that the one-dimensional fractional Laplacian of the restriction of $u$ to any line is bounded above. Indeed, from the Lipschitz continuity and semiconcavity of $u$, 
\[
\int_{\mathbb{R}} \frac{u(te)-u(0)}{|t|^{1+2s}}\,dt=
\int_{\mathbb{R}} \frac{\frac12u(te)+\frac12u(-te)-u(0)}{|t|^{1+2s}}\,dt
\leq
\frac12\int_{\mathbb{R}} \frac{\min\{2L\,|t|,C|t|^2\}}{|t|^{1+2s}}\,dt=\frac{\mu_1}{1-s},
\]
where $\mu_1$ is given by \eqref{def.mu1}.

Now,  fix $e\in\partial B_1(0)$, and choose $P$ such that $e$ is its first column and the rest of columns complete an orthonormal basis of $\mathbb{R}^n$. Notice that $\tilde u(x)=u(Px)$ is in the hypotheses of Theorem \ref{main.result.nondegeneracy.monge.ampere}. Hence, we
 can  apply Lemma \ref{first.part.lemma3} to  $\tilde u$ and get
\[
(1-s)\int_{\mathbb{R}} \frac{\tilde u(y_1,\bar0)- \tilde u(0)}{|y_1|^{1+2s}}\,dy_1\geq\mu_0,
\]
but then, $\tilde u(y_1,\bar0)=\tilde u(y_1e_1)=u(y_1Pe_1)=u(y_1e)$ by definition of $P$.
\end{proof}

Next, we provide the proof of Proposition \ref{second.part.main} that uses Proposition \ref{first.part.main}.

\begin{proof}[Proof of Proposition \ref{second.part.main}.]
Our aim is to prove that the infimum in the fractional Monge-Amp\`ere operator is not realized by matrices that are very degenerate. From Proposition \ref{first.part.main}, we have
\[
\begin{split}
\int_{\mathbb{R}^n}\frac{u(y)-u(0)}{\left(\sum_{j=1}^n\epsilon_j^2y_j^2\right)^\frac{n+2s}{2}}\,dy
&=
\int_{\partial{B_1(0)}}\int_0^\infty\frac{u(re)-u(0)}{r^{1+2s}}\,dr\;\frac{1}{\left(\sum_{j=1}^n\epsilon_j^2e_j^2\right)^\frac{n+2s}{2}}\,d\mathcal{H}^{n-1}(e)\\
& \geq
\frac{ \mu_0}{2(1-s)}
\int_{\partial{B_1(0)}}\frac{1}{\left(\sum_{j=1}^n\epsilon_j^2e_j^2\right)^\frac{n+2s}{2}}\,d\mathcal{H}^{n-1}(e).
\end{split}
\]
Proposition \ref{estimate.kernel.on.sphere} yields the estimate,
\[
\int_{\partial{B_1(0)}}\frac{1}{\left(\sum_{j=1}^n\epsilon_j^2e_j^2\right)^\frac{n+2s}{2}}\,d\mathcal{H}^{n-1}(e)
\geq
\frac{\omega_n}{n}\sum_{j=1}^{n}\frac{1}{\epsilon_j^{2s}},
\]
where we have used that $\prod_{j=1}^n\epsilon_j=1$. This completes the proof.
\end{proof}

\medskip

\section{Comparison and Uniqueness}\label{Sect.comparison}

Next, we prove a comparison principle that yields uniqueness for  problem \eqref{main.problem.g}.
Notice that the same arguments apply to the operator $(1-s)\mathcal{D}_s$ giving a stable result in the limit as $s\to1$. 

\begin{theorem} \label{them.comparison}
Assume $1/2<s<1$, and
 let $g:\mathbb{R}^{n+1}\to\mathbb{R}$ a continuous function satisfying \eqref{monotonicity.condition.g}. Consider $\phi\in\mathcal{C}^{2,\alpha}(\mathbb{R}^n)$, and $u\in USC$ and $v\in LSC$ such that
\[
 \left\{
\begin{split}
&\mathcal{D}_s u(x) \geq g(x,u) \qquad \text{in}
\quad
\mathbb{R}^n \\
&(u- \phi)(x)\to0\quad\!\text{as}\ |x|\to\infty
\end{split}
\right.
\qquad\text{and}
\qquad
 \left\{
\begin{split}
&\mathcal{D}_s v(x)\leq g(x,v) \qquad \text{in}
\quad
\mathbb{R}^n \\
&(v- \phi)(x)\to0\quad\!\text{as}\ |x|\to\infty. 
\end{split}
\right.
\]
in the viscosity sense.  Then, $u\leq v$ in $\mathbb{R}^n$.
\end{theorem}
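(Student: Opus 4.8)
The plan is to argue by contradiction using a doubling-of-variables argument adapted to the nonlocal setting, exploiting the strict monotonicity \eqref{monotonicity.condition.g} to close the estimate. Suppose $M := \sup_{\mathbb{R}^n}(u-v) > 0$. Since $(u-\phi)(x) \to 0$ and $(v-\phi)(x) \to 0$ as $|x| \to \infty$, we have $(u-v)(x) \to 0$ at infinity, so the supremum $M$ is attained at some point (or, after the doubling, essentially attained on a compact set). First I would introduce the penalized quantity
\[
M_\varepsilon := \sup_{x,y \in \mathbb{R}^n}\Big( u(x) - v(y) - \frac{1}{2\varepsilon}|x-y|^2 \Big),
\]
which for small $\varepsilon$ is close to $M$ and is attained at some pair $(x_\varepsilon, y_\varepsilon)$; the decay at infinity of $u-\phi$ and $v-\phi$ together with $\phi \in \mathcal{C}^{2,\alpha}$ guarantees the maximizing pair stays in a fixed compact set, and the standard lemma on penalization gives $\frac{1}{\varepsilon}|x_\varepsilon - y_\varepsilon|^2 \to 0$ and $x_\varepsilon, y_\varepsilon \to \bar x$ where $(u-v)(\bar x) = M$.

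Next I would use that $x \mapsto u(x) - \frac{1}{2\varepsilon}|x-y_\varepsilon|^2$ has a maximum at $x_\varepsilon$, so the paraboloid $\psi(x) = v(y_\varepsilon) + M_\varepsilon + \frac{1}{2\varepsilon}|x-y_\varepsilon|^2$ touches $u$ from above at $x_\varepsilon$; by Lemma \ref{viscosidad.clasico}, $\mathcal{D}_s u(x_\varepsilon)$ is defined classically and $\mathcal{D}_s u(x_\varepsilon) \geq g(x_\varepsilon, u(x_\varepsilon))$. Symmetrically, $y \mapsto v(y) + \frac{1}{2\varepsilon}|x_\varepsilon - y|^2$ has a minimum at $y_\varepsilon$, a paraboloid touches $v$ from below there, and $\mathcal{D}_s v(y_\varepsilon) \leq g(y_\varepsilon, v(y_\varepsilon))$, again in the classical sense. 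The key nonlocal step is to compare $\mathcal{D}_s u(x_\varepsilon)$ and $\mathcal{D}_s v(y_\varepsilon)$: for any admissible $A$ (symmetric, $\det A = 1$), write $w = u - v$ shifted appropriately and estimate the second-order increments. Concretely, for each fixed $y$, since $(x_\varepsilon, y_\varepsilon)$ is the maximizer,
\[
u(x_\varepsilon + z) - v(y_\varepsilon + z) \leq u(x_\varepsilon) - v(y_\varepsilon) + \tfrac{1}{2\varepsilon}\big(|x_\varepsilon - y_\varepsilon + (z - z)|^2 - |x_\varepsilon - y_\varepsilon|^2\big),
\]
so choosing the same increment $z$ on both sides the penalization terms cancel and we get $\delta(u, x_\varepsilon, z) - \delta(v, y_\varepsilon, z) \leq 0$ for every $z$. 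Hence for every admissible $A$, $L_A u(x_\varepsilon) \leq L_A v(y_\varepsilon)$, and taking the infimum over $A$ gives $\mathcal{D}_s u(x_\varepsilon) \leq \mathcal{D}_s v(y_\varepsilon)$.

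Combining the three facts yields
\[
g(x_\varepsilon, u(x_\varepsilon)) \leq \mathcal{D}_s u(x_\varepsilon) \leq \mathcal{D}_s v(y_\varepsilon) \leq g(y_\varepsilon, v(y_\varepsilon)).
\]
Now I would add and subtract $g(x_\varepsilon, v(x_\varepsilon))$ (or use $g(y_\varepsilon, \cdot)$), invoke continuity of $g$ in $x$ and the convergence $x_\varepsilon, y_\varepsilon \to \bar x$, $u(x_\varepsilon) \to u(\bar x)$, $v(y_\varepsilon) \to v(\bar x)$, together with $u(x_\varepsilon) - v(y_\varepsilon) \to M > 0$, and pass to the limit. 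The monotonicity hypothesis \eqref{monotonicity.condition.g} gives $g(x_\varepsilon, u(x_\varepsilon)) - g(x_\varepsilon, v(y_\varepsilon)) \geq \mu\,(u(x_\varepsilon) - v(y_\varepsilon))$ whenever $u(x_\varepsilon) \geq v(y_\varepsilon)$, so in the limit $0 \geq \mathcal{D}_s v(y_\varepsilon) - \mathcal{D}_s u(x_\varepsilon) \geq g(y_\varepsilon, v(y_\varepsilon)) - g(x_\varepsilon, u(x_\varepsilon)) \to -\,(\text{something} \geq \mu M) + (\text{vanishing $x$-oscillation of } g) $, forcing $\mu M \leq 0$, hence $M \leq 0$, a contradiction.

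The main obstacle I anticipate is making the nonlocal comparison $\mathcal{D}_s u(x_\varepsilon) \leq \mathcal{D}_s v(y_\varepsilon)$ fully rigorous at the level of viscosity solutions: one must be careful that the second-order increments are genuinely integrable and that the infimum over the (noncompact, degenerate) family of matrices $A$ behaves well under the inequality $\delta(u, x_\varepsilon, \cdot) \leq \delta(v, y_\varepsilon, \cdot)$ — the cleanest route is via Lemma \ref{viscosidad.clasico}, which reduces everything to classical pointwise evaluation, after which the pointwise inequality of increments and the monotonicity of the integral in the integrand make the comparison immediate. A secondary technical point is confirming that the maximizing pair $(x_\varepsilon, y_\varepsilon)$ does not escape to infinity, which follows from $(u-\phi), (v-\phi) \to 0$ and the boundedness and regularity of $\phi$; I would state this as a short preliminary claim.
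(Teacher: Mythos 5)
Your proof is correct and takes a genuinely different route from the paper. The paper works with sup- and inf-convolutions of the \emph{bounded} functions $\bar u = u-\phi$ and $\bar v = v-\phi$, constructs touching paraboloids at the maximum $x_\epsilon$ of $\bar u^\epsilon - \bar v_\epsilon$, transfers them to test functions for $u$ and $v$ at nearby points $x_\epsilon^*$ and $x_{\epsilon,*}$, and then estimates $\delta(u,x_\epsilon^*,y)-\delta(v,x_{\epsilon,*},y)$ in terms of $\delta(\bar u^\epsilon - \bar v_\epsilon,x_\epsilon,y)\le 0$ plus error terms involving $\delta(\phi,x_\epsilon^*,y)-\delta(\phi,x_{\epsilon,*},y)$, which are killed at the end by the $\mathcal{C}^{2,\alpha}$ regularity of $\phi$. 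You instead double variables with the quadratic penalization $\frac{1}{2\varepsilon}|x-y|^2$ directly on $u$ and $v$; the crucial observation that the penalty term is invariant under $(x,y)\mapsto(x\pm z, y\pm z)$ gives the exact cancellation $\delta(u,x_\varepsilon,z)\le\delta(v,y_\varepsilon,z)$ for all $z$, without introducing $\phi$ at all. That makes the nonlocal comparison step cleaner: once Lemma \ref{viscosidad.clasico} has reduced everything to classical evaluation at $x_\varepsilon$ and $y_\varepsilon$, the inequality $L_A u(x_\varepsilon)\le L_A v(y_\varepsilon)$ holds for every admissible $A$ by monotonicity of the integral, and passing to the infimum is immediate (note only that $L_A u(x_\varepsilon)$ is finite for each $A$ by the integrability established in Lemma \ref{viscosidad.clasico}, so the infimum on the left is genuine, while the right side may be $+\infty$ for some $A$ without harm). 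The price you pay is the compactness of the maximizing pair $(x_\varepsilon,y_\varepsilon)$: since $u,v$ grow linearly you must control the regime where $x$ and $y$ both escape to infinity with $|x-y|$ bounded, using $(u-\phi),(v-\phi)\to 0$ together with $\textnormal{Lip}(\phi)|x-y|-\frac{1}{2\varepsilon}|x-y|^2\le\frac{\varepsilon}{2}\textnormal{Lip}(\phi)^2$, which requires $\varepsilon$ small relative to $M$; you flagged this and it does close, but it deserves the explicit line. The closing step, $\mu\big(u(x_\varepsilon)-v(y_\varepsilon)\big)\le g\big(y_\varepsilon,v(y_\varepsilon)\big)-g\big(x_\varepsilon,v(y_\varepsilon)\big)$ with the right side vanishing by continuity of $g$ and $v(y_\varepsilon)\to v(\bar x)$ from the standard penalization lemma, is sound. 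In short: both proofs are parallel manifestations of the Jensen regularization philosophy; yours trades the $\phi$-error bookkeeping of the paper for a little extra care in the a priori localization of the maximizer.
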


\begin{remark}
It is also possible to assume 
 $t\mapsto g(x,t)$ strictly increasing for any $x\in\mathbb{R}^n$ instead of \eqref{monotonicity.condition.g} to derive a contradiction in \eqref{final.knot}. 
\end{remark}

\begin{proof}
Let us first present the ideas of the proof in the case when $u,v$ are a classical sub- and supersolution, then we shall consider the  viscosity counterparts.

Since we seek to prove $u \leq v$, let us assume to the contrary that  $\sup_{\mathbb{R}^n}(u-v)>0$. As $(u- v)(x)\to0$ as $|x|\to\infty$,  there exists $x_0 \in\mathbb{R}^n$ such that
 \[
 (u- v)(x_0) = \sup_{\mathbb{R}^n}(u-v)>0.
 \]
 Fix $\delta>0$, arbitrary, and let $A_\delta>0$ with $\det A_\delta=1$, such that 
\[
L_{A_\delta} v(x_0)\leq \mathcal{D}_s v(x_0) +\delta\leq g(x_0,v(x_0))+\delta,
\]
for $L_{A_\delta}$  defined as in \eqref{defn.L_A}.
 On the other hand, for the same matrix,
\[
L_{A_\delta} u(x_0)\geq\mathcal{D}_s u(x_0) \geq  g(x_0,u(x_0)).
\]
At a maximum point $\delta(u-v,x_0,y)\leq0$, and
\[
0  \geq L_{A_\delta} (u-v)(x_0)\geq g(x_0,u(x_0))-g(x_0,v(x_0))-   \delta.
\]
Therefore,
since $\delta$ is arbitrary, we can let $\delta\to0$ and get 
\[
g(x_0,v(x_0)) \geq g(x_0,u(x_0)),
\]
a contradiction with the fact that $g(x_0,\cdot)$ is strictly increasing.

\medskip

In the general case, we cannot be certain that $L_{A_\delta} u(x_0)$ and $L_{A_\delta} v(x_0)$ above are well defined, since $u$ and $v$ may not have the necessary regularity. To remedy that we shall use sup- and inf-convolutions and  work with regularized functions. However, we shall rather apply the regularizations to the functions $\bar u=u- \phi$ and $\bar v=v- \phi$, since they are  bounded above and  below respectively (notice that $\bar u\in USC$, $\bar v\in LSC$, and $\bar u(x),\bar v(x) \to0$ as $|x|\to\infty$ imply that $\bar u,\bar v$ have respectively a maximum and a minimum). 

Consider the sup- and inf-convolution of $\bar u, \bar v$, respectively,
\begin{equation}\label{sup.convolution}
\bar u^\epsilon(x)=\sup_{y}
\left\{ \bar u(y)- \frac{|x-y|^2}{\epsilon}\right\} 
\end{equation}
and 
\[
\bar v_\epsilon(x)=\inf_{y}
\left\{ \bar v(y)+ \frac{|x-y|^2}{\epsilon}\right\}.
\]
Before we start with the proof, let us recall for the reader's convenience two properties of $\bar u^\epsilon$ that we shall use in the sequel. Analogous properties hold for  $\bar v_\epsilon$ noticing that $\bar v_\epsilon=-(- \bar v)^\epsilon$.
\begin{enumerate}
 \item $\bar u^\epsilon$ is bounded above. Since $\bar u$   is bounded above by some constant $C$, we have
 \[
 \bar u^\epsilon(x)\leq\sup_{y}
\left\{ C- \frac{|x-y|^2}{\epsilon}\right\} =C.
 \]
 \item The supremum in the definition of \eqref{sup.convolution} is achieved. In fact,
\begin{equation}\label{supconvol.formula2}
 \bar u^\epsilon(x)=\sup_{|y-x|^2\leq2\| \bar u\|_\infty\epsilon}
\left\{ \bar u(y)- \frac{|x-y|^2}{\epsilon}\right\}=\bar u(x^*)- \frac{|x-x^*|^2}{\epsilon}
 \end{equation}
 for some $x^*$ such that
\begin{equation}\label{supconvol.formula2.estimates}
 |x-x^*|^2\leq2\| \bar u\|_\infty\epsilon
\end{equation}
  (here we are  slightly abusing notation for the sake of brevity since, as $\bar u\in USC$,  we  should write $\sup\bar u$ instead of $\| \bar u\|_\infty$).
 To see this, first notice that
 since $\bar u^\epsilon$ is bounded above, for any given $\delta>0$ there exists $x_\delta$ such that,
\[
\bar u^\epsilon(x)= \sup_{y}
\left\{ \bar u(y)- \frac{|x-y|^2}{\epsilon}\right\} \leq  \bar u(x_\delta)- \frac{|x-x_\delta|^2}{\epsilon}+\delta.
\]
 Since $\bar u(x)\leq \bar u^\epsilon(x)$ (pick $y=x$ in the definition of $\bar u^\epsilon(x)$), we conclude
 that $|x - x_\delta|^2\leq(2\|\bar u\|_\infty+1)\epsilon$, assuming $\delta<1$. Therefore, 
   \[
 \bar u^\epsilon(x)\leq\sup_{|y-x|^2\leq(2\| \bar u\|_\infty+1)\epsilon}
\left\{ \bar u(y)- \frac{|x-y|^2}{\epsilon}\right\}+\delta.
 \]
Since $\delta$ is arbitrary, we can let $\delta\to0$ and conclude that the supremum in the definition of \eqref{sup.convolution} is achieved,
  \[
 \bar u^\epsilon(x)=\sup_{|y-x|^2\leq(2\| \bar u\|_\infty+1)\epsilon}
\left\{ \bar u(y)- \frac{|x-y|^2}{\epsilon}\right\}.
 \]
 At this point, we can repeat the 
previous argument with $\delta=0$ and get formula \eqref{supconvol.formula2}.

\end{enumerate}

\medskip

Now, again for the sake of contradiction,  assume  $\sup_{\mathbb{R}^n}(u-v)>0$. Notice that $\bar u^\epsilon(x)- \bar v_\epsilon(x)\geq\bar u(x)-\bar v(x)$  (pick $y=x$ in the definitions of $\bar u^\epsilon(x),\bar v_\epsilon(x)$), and therefore,
\begin{equation}\label{suprem.1}
\sup_{\mathbb{R}^n}(\bar u^\epsilon- \bar v_\epsilon)\geq
\sup_{\mathbb{R}^n}(\bar u- \bar v)=\sup_{\mathbb{R}^n}(u-v)>0.
\end{equation}
Moreover, $(\bar u^\epsilon- \bar v_\epsilon)(x)\to 0$ as $|x|\to \infty$. To see this, notice that
\[
\begin{split}
 \bar u (x)- \bar v (x)&\leq  \bar u ^\epsilon(x)- \bar v _\epsilon(x)
 \\
 &=\sup_{|y|^2\leq2\| \bar u\|_\infty\epsilon}
\left\{ \bar u(x+ y)- \frac{|y|^2}{\epsilon}\right\} - \inf_{|y|^2\leq2\| \bar v\|_\infty\epsilon}
\left\{ \bar v(x+ y)+ \frac{|y|^2}{\epsilon}\right\} 
\\
& \leq \sup_{|y|^2\leq2\| \bar u\|_\infty\epsilon} \bar u(x+y)- \inf_{|y|^2\leq2\| \bar v\|_\infty\epsilon} \bar v(x+y),
\end{split}
\]
 and   $ \bar u (x)- \bar v (x)$, $\sup_{|y|^2\leq2\| \bar u\|_\infty\epsilon} \bar u(x+y)$, and $ \inf_{|y|^2\leq2\| \bar v\|_\infty\epsilon} \bar v(x+y)$ converge to 0 as $|x|\to\infty$.

Thus, there exists $x_\epsilon$ such that
\begin{equation}\label{suprem.2}
 (\bar u^\epsilon- \bar v_\epsilon)(x_\epsilon)=\sup_{\mathbb{R}^n}(\bar u^\epsilon- \bar v_\epsilon).
\end{equation}

An important point in the sequel is that both functions $\bar u^\epsilon$ and $ \bar v_\epsilon$ are $\mathcal{C}^{1,1}$ at $x_\epsilon$, so that  the integrals in the  operators appearing in the subsequent computations are well-defined. This follows from the following three facts:
\begin{itemize}
 \item 
 The paraboloid 
  \[
  P(x)=\bar u(x_\epsilon^*)- \frac{|x-x_\epsilon^*|^2}{\epsilon}
  \]
   touches $\bar u^\epsilon$ from below at $x_\epsilon$
  for $x_\epsilon^*$ such that
  $\bar u^\epsilon(x_\epsilon)= \bar u(x_\epsilon^*)- \frac{|x_\epsilon-x_\epsilon^*|^2}{\epsilon}.$
  \item The paraboloid 
  \[
  Q(x)=\bar v(x_{\epsilon,*})+ \frac{|x-x_{\epsilon,*}|^2}{\epsilon}
  \]
   touches $\bar v_\epsilon$ from above at $x_\epsilon$
  for $x_{\epsilon,*}$ such that
  $\bar v_\epsilon(x_\epsilon)= \bar v(x_{\epsilon,*})+ \frac{|x_\epsilon-x_{\epsilon,*}|^2}{\epsilon}.$
 
   \item Since $x_\epsilon$ is a maximum point of $\bar u^\epsilon- \bar v_\epsilon$, the function $\bar v_\epsilon(x)-\bar v_\epsilon(x_\epsilon)+\bar u^\epsilon(x_\epsilon)$ touches $\bar u^\epsilon$ from above at $x_\epsilon$.
\end{itemize}
We conclude from these three facts that the paraboloids
$Q(x)- \bar v_\epsilon(x_\epsilon)+\bar u^\epsilon(x_\epsilon)$ and  $P(x)+ \bar v_\epsilon(x_\epsilon)-\bar u^\epsilon(x_\epsilon)$
touch respectively $\bar u^\epsilon$  from above and $\bar v_\epsilon$ from below   at the point $x_\epsilon$. Therefore, both $\bar u^\epsilon$ and $\bar v_\epsilon$ can  be touched from above and below by a paraboloid at $x_\epsilon$ and they are $\mathcal{C}^{1,1}$ at $x_\epsilon$.

The fact that both  $\bar u^\epsilon$ and $\bar v_\epsilon$  are $\mathcal{C}^{1,1}$ at $x_\epsilon$ is crucial to make rigorous the formal argument described at the beginning of the proof.  Since $\bar u^\epsilon\in\mathcal{C}^{1,1}$ at $x_\epsilon$, there exists a paraboloid $P(x)$ that touches $\bar u^\epsilon$ from above at $x_\epsilon$. Then, the function
\[
\tilde P(x)=P(x+x_{\epsilon}-x_{\epsilon}^*)+\frac{|x_{\epsilon}-x_{\epsilon}^*|^2}{\epsilon}+\phi(x)
\]
touches $ u$ from above at $x_{\epsilon}^*$. On the other hand, there exists a paraboloid $Q(x)$ that touches $\bar v_\epsilon$ from below at $x_\epsilon$ and then, the function
\[
\tilde Q(x)=Q(x+x_{\epsilon}-x_{\epsilon,*})-\frac{|x_{\epsilon}-x_{\epsilon,*}|^2}{\epsilon}+\phi(x)
\]
touches $v$ from below at $x_{\epsilon,*}$.   By Lemma \ref{viscosidad.clasico} we have
\[
\mathcal{D}_s  u(x_\epsilon^*)\geq g\big(x_\epsilon^*,u(x_\epsilon^*)\big),
\qquad \text{and} \qquad
\mathcal{D}_s v(x_{\epsilon,*})\leq g\big(x_{\epsilon,*},v(x_{\epsilon,*})\big)
\]
in the classical sense.

Fix $\eta>0$, arbitrary, and let $A_\eta>0$ with $\det A_\eta=1$ such that 
\[
L_{A_\eta} v(x_{\epsilon,*})\leq  g\big(x_{\epsilon,*},v(x_{\epsilon,*})\big)+\eta,
\]
and
\[
L_{A_\eta} u(x_\epsilon^*)\geq \mathcal{D}_s  u(x_\epsilon^*)\geq g\big(x_\epsilon^*,u(x_\epsilon^*)\big),
\]
with $L_{A_\eta}$  defined as in \eqref{defn.L_A}.
Subtracting, we get
\begin{equation}\label{comparisonson.relations.0}
 L_{A_\eta} u(x_\epsilon^*)-L_{A_\eta} v(x_{\epsilon,*})\geq g\big(x_\epsilon^*,u(x_\epsilon^*)\big)-g\big(x_{\epsilon,*},v(x_{\epsilon,*})\big)- \eta.
\end{equation}
The rest of the proof is devoted to derive a contradiction from the previous inequality by showing that, for $\epsilon$ small enough, the left-hand side is strictly smaller than the right-hand side.

Let us prove first  that,
\begin{equation}\label{the.big.thing}
\lim_{\epsilon\to\infty}\big(L_{A_\eta} u(x_\epsilon^*)-L_{A_\eta} v(x_{\epsilon,*})\big)\leq0.
 \end{equation}
By definition of the operator $L_{A_\eta}$, we have
\begin{equation}\label{relation.comp.operators.left}
L_{A_\eta} u(x_\epsilon^*)-L_{A_\eta} v(x_{\epsilon,*})=\frac{1}{2}
\int_{\mathbb{R}^n}\frac{\delta(u,x_\epsilon^*,y)- \delta(v,x_{\epsilon,*},y)}{|A_\eta^{-1}y|^{n+2s}}\, dy.
\end{equation}
Notice that
\begin{equation}\label{relation.deltas.comp.1}
  \delta(\bar u^\epsilon,x_\epsilon,y)\geq   \delta(\bar u,x_\epsilon^*,y)\qquad\text{and}\qquad
 \delta(\bar v_\epsilon,x_\epsilon,y)\leq   \delta(\bar v,x_{\epsilon,*},y).
\end{equation}
Since the proof of both inequalities is analogous, let us show how to obtain the first one.
As we have seen, 
\[
 \bar u^\epsilon(x_\epsilon)= \bar u(x_\epsilon^*)- \frac{|x_\epsilon-x_\epsilon^*|^2}{\epsilon}.
  \]
On the other hand, picking $z=x_\epsilon^*-x_\epsilon$,
\[
\bar u ^\epsilon(x_\epsilon\pm y) =\sup_{z}
\left\{ \bar u(x_\epsilon\pm y+z)- \frac{|z|^2}{\epsilon}\right\}\geq  \bar u(x_\epsilon^*\pm y)- \frac{|x_\epsilon-x_\epsilon^*|^2}{\epsilon}.
\]
From these two expressions, we get \eqref{relation.deltas.comp.1}.

Now, using \eqref{relation.deltas.comp.1}, we have that
\[
\begin{split}
 \delta(u,x_\epsilon^*,y)- \delta(v,x_{\epsilon,*},y)&=
  \delta(\bar u,x_\epsilon^*,y)- \delta(\bar v,x_{\epsilon,*},y)
  + \delta(\phi,x_\epsilon^*,y)- \delta(\phi,x_{\epsilon,*},y)\\
&  \leq
  \delta(\bar u^\epsilon,x_\epsilon,y)- \delta(\bar v_\epsilon,x_\epsilon,y)
  + \delta(\phi,x_\epsilon^*,y)- \delta(\phi,x_{\epsilon,*},y)\\
 &=  \delta(\bar u^\epsilon-\bar v_\epsilon,x_\epsilon,y)
  + \delta(\phi,x_\epsilon^*,y)- \delta(\phi,x_{\epsilon,*},y).
\end{split}
 \]
 Observe that  $x_\epsilon$ is a maximum point of $\bar u^\epsilon-\bar v_\epsilon$ and therefore $\delta(\bar u^\epsilon-\bar v_\epsilon,x_\epsilon,y)\leq0$. We conclude
 \begin{equation}\label{relation.deltas.comp.2}
 \delta(u,x_\epsilon^*,y)- \delta(v,x_{\epsilon,*},y)\leq
  \delta(\phi,x_\epsilon^*,y)- \delta(\phi,x_{\epsilon,*},y).
 \end{equation}
 
 From \eqref{comparisonson.relations.0}, \eqref{relation.comp.operators.left} and \eqref{relation.deltas.comp.2}, we get
 \begin{equation}\label{relation.comp.operators.left2}
 \begin{split}
 L_{A_\eta} \phi(x_\epsilon^*)&-L_{A_\eta} \phi(x_{\epsilon,*})
=\frac{1}{2}
\int_{\mathbb{R}^n}\frac{\delta(\phi,x_\epsilon^*,y)- \delta(\phi,x_{\epsilon,*},y)}{|A_\eta^{-1}y|^{n+2s}}\, dy\\
&\geq\frac{1}{2}
\int_{\mathbb{R}^n}\frac{\delta(u,x_\epsilon^*,y)- \delta(v,x_{\epsilon,*},y)}{|A_\eta^{-1}y|^{n+2s}}\, dy
=L_{A_\eta} u(x_\epsilon^*)-L_{A_\eta} v(x_{\epsilon,*})\\
&\geq  g\big(x_\epsilon^*,u(x_\epsilon^*)\big)-g\big(x_\epsilon^*,v(x_{\epsilon,*})\big)+g\big(x_\epsilon^*,v(x_{\epsilon,*})\big)-g\big(x_{\epsilon,*},v(x_{\epsilon,*})\big)- \eta.
\end{split}
\end{equation}

Recall from  \eqref{suprem.1} and \eqref{suprem.2} that
\[
 (\bar u^\epsilon- \bar v_\epsilon)(x_\epsilon)=\sup_{\mathbb{R}^n}(\bar u^\epsilon- \bar v_\epsilon)\geq
\sup_{\mathbb{R}^n}(\bar u- \bar v)>0.
\]
Therefore,
\[
\bar u(x_\epsilon^*)- \bar v(x_{\epsilon,*}) \geq
\sup_{\mathbb{R}^n}(\bar u- \bar v)+\frac{|x_\epsilon-x_\epsilon^*|^2+|x_\epsilon-x_{\epsilon,*}|^2}{\epsilon},
\]
or equivalently,
\[
u(x_\epsilon^*)- v(x_{\epsilon,*}) \geq
\sup_{\mathbb{R}^n}(\bar u- \bar v)+\big(\phi(x_\epsilon^*)- \phi(x_{\epsilon,*}) \big)+\frac{|x_\epsilon-x_\epsilon^*|^2+|x_\epsilon-x_{\epsilon,*}|^2}{\epsilon},
\]
Notice that from estimate \eqref{supconvol.formula2.estimates} and its analogous for the inf-convolution, we have
\[
 |x_\epsilon-x_\epsilon^*|^2\leq2\| \bar u\|_\infty\epsilon\qquad\text{and}\qquad  |x_\epsilon-x_{\epsilon,*}|^2\leq2\| \bar v\|_\infty\epsilon.
\]
Thus, by the continuity of $\phi$, we have that for $\epsilon$ small enough, 
\[
u(x_\epsilon^*)- v(x_{\epsilon,*}) \geq
\frac{1}{2}\sup_{\mathbb{R}^n}(\bar u- \bar v)>0.
\]
Since $\phi\in\mathcal{C}^{2,\alpha},$ in particular $L_{A_\eta} \phi(x)$ is a continuous function and, for $\epsilon$ small enough
\[
\big( L_{A_\eta} \phi(x_\epsilon^*)-L_{A_\eta} \phi(x_{\epsilon,*})\big)\leq\eta.
\]
By the continuity of $g$, we can also assume that $g\big(x_\epsilon^*,v(x_{\epsilon,*})\big)-g\big(x_{\epsilon,*},v(x_{\epsilon,*})\big)\geq- \eta$ .
Then, we have from \eqref{relation.comp.operators.left2} and \eqref{monotonicity.condition.g} that
\begin{equation}\label{final.knot}
3\eta\geq  g\big(x_\epsilon^*,u(x_\epsilon^*)\big)-g\big(x_\epsilon^*,v(x_{\epsilon,*})\big)\geq\mu\big(u(x_\epsilon^*)- v(x_{\epsilon,*})\big) \geq
\frac{\mu}{2}\sup_{\mathbb{R}^n}(\bar u- \bar v)>0.
\end{equation}
Since $\eta$ is arbitrary, we can choose $\eta\leq \frac{\mu}{12}\sup_{\mathbb{R}^n}(\bar u- \bar v)$ and get a contradiction.
\end{proof}

\medskip

\section{Lipschitz continuity and semiconcavity of solutions}\label{sect3.exist.lip}

In this section, we prove Lipschitz continuity and semiconcavity of solutions to \eqref{main.problem.g}
with $\phi$ under the hypothesis of Section \ref{section.intro}. These results are needed to fulfill the hypotheses of Theorem \ref{main.result.nondegeneracy.monge.ampere}.

\begin{remark}
The regularity results below apply to the operator $(1-s)\mathcal{D}_s$. Notice that all constants involved in the estimates are independent of $s$ and allow passing to the limit as $s\to1$. 
\end{remark}

We start with the particular case when $g(x,v(x))=v(x)- \phi(x)$ to illustrate the key ideas.

\begin{proposition}
Assume $\phi$ is semiconcave and Lipschitz continuous and let $v$ be the solution of
\begin{equation}\label{main.problem.v-phi}
\left\{
\begin{split}
&\mathcal{D}_sv(x)=v(x)- \phi(x)  \qquad \text{in}
\
\mathbb{R}^n \\
&(v- \phi)(x)\to0\quad\!\text{as}\ |x|\to\infty.
\end{split}
\right.
\end{equation}
 Then, $v$  is Lipschitz continuous and  semiconcave with the same constants as $\phi$.
\end{proposition}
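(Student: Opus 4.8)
The plan is to derive both properties from the comparison principle (Theorem~\ref{them.comparison}) together with three elementary invariances of $\mathcal{D}_s$: it commutes with translations ($\mathcal{D}_s(w(\cdot+h))(x)=(\mathcal{D}_sw)(x+h)$), it is unchanged under the addition of a constant, and, being an infimum of the linear operators $L_A$, it is concave, $\mathcal{D}_s\big(\frac12 f+\frac12 g\big)\ge \frac12\mathcal{D}_s f+\frac12\mathcal{D}_s g$. Throughout, $L$ and $C$ denote the Lipschitz and semiconcavity constants of $\phi$, and we note that $g(x,t)=t-\phi(x)$ satisfies \eqref{monotonicity.condition.g} with $\mu=1$, so that the comparison principle applies to \eqref{main.problem.v-phi}. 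One preliminary observation: the conclusion of Theorem~\ref{them.comparison} still holds if the subsolution $w$ only satisfies $\limsup_{|x|\to\infty}(w-\phi)(x)\le 0$ rather than $(w-\phi)(x)\to 0$, since its proof only uses that $\sup_{\mathbb{R}^n}(w-v)$, when positive, is attained at a finite point.

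For the Lipschitz estimate I would fix $h\in\mathbb{R}^n$ and put $w(x)=v(x+h)-L|h|$. By translation invariance and invariance under constants, $\mathcal{D}_s w(x)=\mathcal{D}_s\big(v(\cdot+h)\big)(x)=v(x+h)-\phi(x+h)$ in the viscosity sense; since $\phi(x+h)\le\phi(x)+L|h|$ this gives $\mathcal{D}_s w(x)\ge w(x)-\phi(x)$, so $w$ is a subsolution of \eqref{main.problem.v-phi}. Writing $(w-\phi)(x)=\big(v(x+h)-\phi(x+h)\big)+\big(\phi(x+h)-\phi(x)\big)-L|h|$ and using $(v-\phi)\to 0$ together with $|\phi(x+h)-\phi(x)|\le L|h|$, one gets $\limsup_{|x|\to\infty}(w-\phi)(x)\le 0$. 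Comparing $w$ with the solution $v$ yields $w\le v$, i.e. $v(x+h)-v(x)\le L|h|$ for all $x$; as $h$ is arbitrary, $v$ is Lipschitz with constant $L$.

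For semiconcavity I would fix $y\in\mathbb{R}^n$ and put $w(x)=\frac12 v(x+y)+\frac12 v(x-y)-\frac{C}{2}|y|^2$. By concavity and translation invariance of $\mathcal{D}_s$, $\mathcal{D}_s w(x)\ge \frac12\big(v(x+y)-\phi(x+y)\big)+\frac12\big(v(x-y)-\phi(x-y)\big)$, and since $\phi(x+y)+\phi(x-y)=2\phi(x)+\delta(\phi,x,y)\le 2\phi(x)+C|y|^2$ the right-hand side is $\ge \frac12\big(v(x+y)+v(x-y)\big)-\phi(x)-\frac{C}{2}|y|^2=w(x)-\phi(x)$; thus $w$ is again a subsolution of \eqref{main.problem.v-phi}. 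The behaviour at infinity is handled as above, now using $\frac12\delta(\phi,x,y)-\frac{C}{2}|y|^2\le 0$. Comparison gives $w\le v$, that is $\delta(v,x,y)=v(x+y)+v(x-y)-2v(x)\le C|y|^2$ for all $x,y$, which is semiconcavity with constant $C$.

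The step that requires genuine care — and which I expect to be the main obstacle — is the use of the concavity of $\mathcal{D}_s$ in the viscosity sense in the semiconcavity argument: it is not automatic that $\frac12 v(\cdot+y)+\frac12 v(\cdot-y)$ is a viscosity subsolution of the concave operator $\mathcal{D}_s$ with the averaged right-hand side, since a $\mathcal{C}^2$ function touching this average from above need not split into admissible test functions for $v$ at the two shifted points. I would circumvent this exactly as in the proof of Theorem~\ref{them.comparison}: regularize by replacing $v-\phi$ with its sup-convolution (and $v$ with that plus $\phi$), which is semiconvex and, at the maximum point of the relevant difference, $\mathcal{C}^{1,1}$, so that $\mathcal{D}_s$ can be evaluated classically there by Lemma~\ref{viscosidad.clasico}; one then carries out the pointwise computation above and lets the regularization parameter tend to $0$. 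The Lipschitz part needs no such regularization, since translations and additive constants pass to the viscosity formulation directly.
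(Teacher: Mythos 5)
Your plan is correct and it reaches the conclusion by a genuinely different route from the paper's. The paper works directly with the incremental quotients: it locates a point $x_0$ where the first-order (resp.\ second-order) quotient of $v$ attains its supremum, chooses a nearly optimal matrix $A_\eta$ for $\mathcal{D}_s v$ at $x_0$, and, using that the quotient has a nonpositive second-order increment at its maximum, concludes $L_{A_\eta}\big(v(x_0+e)-v(x_0)\big)\le 0$ (resp.\ $L_{A_\eta}\delta(v,x_0,e)\le 0$), from which the bounds drop out after $\eta\to 0$. You instead construct an explicit competitor ($v(\cdot+h)-L|h|$, resp.\ $\tfrac12 v(\cdot+y)+\tfrac12 v(\cdot-y)-\tfrac{C}{2}|y|^2$), verify it is a subsolution using translation invariance and concavity of $\mathcal{D}_s$, and invoke comparison. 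For the Lipschitz bound your route is cleaner: translation invariance and invariance under additive constants pass to the viscosity formulation directly, and the only additional care is your (correct and easily checked) observation that Theorem~\ref{them.comparison} still holds when the subsolution merely satisfies $\limsup_{|x|\to\infty}(w-\phi)\le 0$, since its proof only needs the supremum, if positive, to be attained. For semiconcavity you have isolated the genuine obstacle: the average of two viscosity subsolutions of a concave operator is not automatically a viscosity subsolution of the averaged equation, because a $\mathcal{C}^2$ function touching the average from above need not split into admissible test functions at the two shifted points. Your proposed remedy --- sup- and inf-convolve, evaluate classically at the maximum via Lemma~\ref{viscosidad.clasico}, then let the parameter tend to zero --- is precisely the regularization the paper carries out in Proposition~\ref{lemma3}; once you write it out, the two arguments coincide. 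In short: a valid alternative framing, genuinely lighter on the Lipschitz side, and converging to the paper's proof on the semiconcavity side.
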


\begin{proof}
In the following proof, we assume for clarity of presentation that $v$ is a classical solution to \eqref{main.problem.v-phi} and all the equations hold pointwise. The argument can be made rigorous using a regularization argument (similar to the one in the proof of Theorem \ref{them.comparison}) that is explained in detail in the proofs of the more general results Propositions \ref{lemma2} and \ref{lemma3} below, so we shall skip it here.

\medskip

1. For the proof of Lipschitz continuity, fix $e\in\mathbb{R}^n$ and consider  the first-order incremental quotient $v(x+e)-v(x)$.
Observe that
\[
v(x+e)-v(x)= (v-\phi)(x+e)-(v-\phi)(x)+\phi(x+e)-\phi(x)\leq o(1)+\textnormal{Lip}(\phi)\,|e| 
\]
as $|x|\to\infty$, and therefore $v(x+e)-v(x)$ is bounded above. Furthermore, we can assume  that 
\[
\sup_{x\in\mathbb{R}^n}\big(v(x+e)-v(x)\big)>\textnormal{Lip}(\phi)\,|e|, 
\]
since we are done otherwise. Then, there exists some  $x_0$ such that
\[
v(x_0+e)-v(x_0)=\sup_{x\in\mathbb{R}^n}\big(v(x+e)-v(x)\big).
\]

Fix $\eta>0$, arbitrary, and let $A_\eta>0$ such that 
\[
L_{A_\eta} v(x_0)\leq v(x_0)-\phi(x_0)+\eta,
\]
and
\[
L_{A_\eta} v(x_0+e)\geq \mathcal{D}_s  v(x_0+e)\geq v(x_0+e)-\phi(x_0+e),
\]
with $L_{A_\eta}$  defined as in \eqref{defn.L_A}. We have from the above expressions that
\[
L_{A_\eta} \big(v(x_0+e)- v(x_0)\big)\geq \big(v(x_0+e)- v(x_0)\big)-\big(\phi(x_0+e)- \phi(x_0)\big)-\eta.
\]
Notice that $\delta\big(v(\cdot+e)-v,x_0,y\big)\leq0$, and therefore $L_{A_\eta}\big(v(x_0+e)-v(x_0)\big)\leq0$.
Consequently,
\[
\sup_{x\in\mathbb{R}^n}\big(v(x+e)-v(x)\big)= v(x_0+e)- v(x_0)\leq \textnormal{Lip}(\phi)|e|+\eta
\]
and we conclude letting $\eta\to0$.

A symmetric argument, where $x_0$ is a point such that
\[
v(x_0+e)-v(x_0)=\inf_{x\in\mathbb{R}^n}\big(v(x+e)-v(x)\big)<-\textnormal{Lip}(\phi)\,|e|, 
\]
and
the operator $L_{A_\eta}$ is such that,
\[
L_{A_\eta} v(x_0+e)\leq v(x_0+e)-\phi(x_0+e)+\eta,
\]
and
\[
L_{A_\eta} v(x_0)\geq v(x_0)-\phi(x_0)
\]
yields
\[
\inf_{x\in\mathbb{R}^n}\big(v(x+e)-v(x)\big)\geq-\textnormal{Lip}(\phi)\,|e|.
\]

\medskip

2. For the proof of semiconcavity, consider  the second-order incremental quotient $\delta(v,x,e)=v(x+e)+v(x-e)-2v(x)$.
Denote by $SC(\phi)$ the semiconcavity constant of $\phi$, and notice that
\[
\delta(v,x,e)= \delta(v-\phi,x,e)+\delta(\phi,x,e)\leq o(1)+SC(\phi)\,|e|^2 \qquad\text{as}\ |x|\to\infty
\]
so $\delta(v,x,e)$ is bounded above. Furthermore, we can assume  that 
\[
\sup_{x\in\mathbb{R}^n}\delta(v,x,e)>SC(\phi)\,|e|^2
\]
since we are done otherwise. Then, there exists some  $x_0$ such that
\[
\delta(v,x_0,e)=\sup_{x\in\mathbb{R}^n} \delta(v,x,e).
\]
As before, fix $\eta>0$ arbitrary, and let $A_\eta>0$  such that 
\[
L_{A_\eta} v(x_0)\leq v(x_0)- \phi(x_0)+\eta,
\]
and
\[
L_{A_\eta} v(x_0\pm e)\geq \mathcal{D}_s  v(x_0\pm e)\geq v(x_0\pm e)- \phi(x_0\pm e),
\]
with $L_{A_\eta}$  defined as in \eqref{defn.L_A}.
We have from the above expressions that
\[
L_{A_\eta}\delta(v,x_0,e)\geq\delta(v,x_0,e)- \delta(\phi,x_0,e)-2\eta.
\]
Notice that $\delta\big(\delta(v,\cdot \, ,e),x_0,z\big)\leq0$, and therefore $L_{A_\eta}\delta(v,x_0,e)\leq0$.
Consequently,
\[
\delta(v,x,e)\leq\delta(v,x_0,e)\leq\delta(\phi,x_0,e)+2\eta\leq SC|e|^2+2\eta.
\]
We conclude letting $\eta\to0$.
\end{proof}

\medskip

In the next result we prove that solutions to \eqref{main.problem.g}  are Lipschitz continuous whenever $g$ on the right-hand side satisfies \eqref{Lipschitz.condition.g} and \eqref{monotonicity.condition.g}. 

\begin{proposition}[Lipschitz continuity  of the solution]\label{lemma2}
Let $g:
\mathbb{R}^{n+1}
\to\mathbb{R}$ satisfy  \eqref{Lipschitz.condition.g} and \eqref{monotonicity.condition.g}.
Then, $v$, the solution to \eqref{main.problem.g},
is uniformly Lipschitz continuous, namely, for every $x,y\in\mathbb{R}^n$,\
\[
\frac{|v(x)-v(y) |}{|x-y|}\leq \max\left\{\frac{\textnormal{Lip}(g)}{\mu}, \textnormal{Lip}(\phi)\right\}.
\]
\end{proposition}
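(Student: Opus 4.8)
The plan is to mimic the structure of the preceding Proposition (the model case $g(x,v)=v-\phi$), replacing the monotonicity in $v$ that came for free there with the quantitative hypothesis \eqref{monotonicity.condition.g} and the Lipschitz control in $x$ with \eqref{Lipschitz.condition.g}. Set $M=\max\{\textnormal{Lip}(g)/\mu,\textnormal{Lip}(\phi)\}$. Fix a direction $e\in\mathbb{R}^n$, and consider the first-order incremental quotient $w(x):=v(x+e)-v(x)$. As in the model case, since $(v-\phi)(x)\to0$ at infinity we have $w(x)=(v-\phi)(x+e)-(v-\phi)(x)+\phi(x+e)-\phi(x)\le o(1)+\textnormal{Lip}(\phi)|e|$, so $w$ is bounded above; if $\sup_x w(x)\le M|e|$ we are done, so assume $\sup_x w(x)>M|e|\ge\textnormal{Lip}(\phi)|e|$ and let $x_0$ be a point where the supremum is attained (it exists because $w\to 0$ along any sequence going to infinity — actually $w$ stays $\le\textnormal{Lip}(\phi)|e|+o(1)$ there, strictly below the supremum).

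Next I would run the two-point argument. Fix $\eta>0$ and pick $A_\eta>0$, $\det A_\eta=1$, with $L_{A_\eta}v(x_0)\le \mathcal{D}_s v(x_0)+\eta=g(x_0,v(x_0))+\eta$; for the same matrix $L_{A_\eta}v(x_0+e)\ge\mathcal{D}_sv(x_0+e)=g(x_0+e,v(x_0+e))$. Subtracting and using that $x_0$ maximizes $w$, hence $\delta(w,x_0,y)\le0$ and therefore $L_{A_\eta}w(x_0)\le0$, gives
\[
0\ge L_{A_\eta}w(x_0)\ge g\big(x_0+e,v(x_0+e)\big)-g\big(x_0,v(x_0)\big)-\eta.
\]
Now split the right-hand side as $\big[g(x_0+e,v(x_0+e))-g(x_0+e,v(x_0))\big]+\big[g(x_0+e,v(x_0))-g(x_0,v(x_0))\big]$. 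The second bracket is bounded below by $-\textnormal{Lip}(g)|e|$ by \eqref{Lipschitz.condition.g}. For the first bracket, since $v(x_0+e)-v(x_0)=\sup_x w>M|e|>0$, monotonicity \eqref{monotonicity.condition.g} gives it is $\ge\mu\big(v(x_0+e)-v(x_0)\big)$. Hence $0\ge \mu\big(v(x_0+e)-v(x_0)\big)-\textnormal{Lip}(g)|e|-\eta$, i.e. $\sup_x w=v(x_0+e)-v(x_0)\le \textnormal{Lip}(g)|e|/\mu+\eta/\mu\le M|e|+\eta/\mu$; letting $\eta\to0$ yields $v(x+e)-v(x)\le M|e|$ for all $x$. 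The symmetric argument — take $x_0$ a minimizer of $w$ with $w(x_0)<-M|e|$, choose $A_\eta$ nearly realizing the infimum in $\mathcal{D}_s v(x_0+e)$ and use $\mathcal{D}_s v(x_0)\le L_{A_\eta}v(x_0)$ the other way — gives $v(x+e)-v(x)\ge -M|e|$. Combining, $|v(x+e)-v(x)|\le M|e|$ for every $x$ and every $e$, which is the claim.

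The remaining point, and the genuine technical obstacle, is that $v$ need only be a viscosity solution, so the quantities $L_{A_\eta}v(x_0)$, $L_{A_\eta}v(x_0\pm e)$ and $L_{A_\eta}w(x_0)$ are a priori not classically defined. This is handled exactly as in the proof of Theorem \ref{them.comparison}: work with $\bar v=v-\phi$, which is bounded, and its sup- and inf-convolutions $\bar v^\epsilon$, $\bar v_\epsilon$; the function whose extremum we track is $\bar v^\epsilon(x+e)-\bar v_\epsilon(x)$ (plus $\phi(x+e)-\phi(x)$), which still tends to $0$ at infinity and still exceeds $\textnormal{Lip}(\phi)|e|$ somewhere under our standing assumption, so it has a max at some $x_\epsilon$; at $x_\epsilon$ both regularizations are $\mathcal{C}^{1,1}$ (touched above and below by paraboloids), so Lemma \ref{viscosidad.clasico} lets us evaluate $\mathcal{D}_s$ classically at the contact points $x_\epsilon^*$ and $x_{\epsilon,*}$, and the inequalities $\delta(\bar v^\epsilon(\cdot+e)-\bar v_\epsilon,x_\epsilon,y)\le 0$ together with $\delta(\bar v^\epsilon,x_\epsilon,y)\ge\delta(\bar v,x_\epsilon^*+? \,,y)$-type comparisons reproduce the displayed chain up to errors controlled by $|x_\epsilon-x_\epsilon^*|^2/\epsilon+|x_\epsilon-x_{\epsilon,*}|^2/\epsilon\to0$ and by the continuity of $\phi$ and $g$. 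Since the estimates involved produce only $o(1)$ and $\eta$-errors, passing $\epsilon\to0$ and then $\eta\to0$ recovers $|v(x)-v(y)|\le M|x-y|$. I do not expect new difficulties beyond carefully transcribing the regularization bookkeeping already carried out for the comparison theorem; the only substantive new input over the model case is the algebraic split of the $g$-difference into an $x$-Lipschitz term and a monotone-in-$v$ term.
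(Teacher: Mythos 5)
Your formal two-point argument is the paper's argument verbatim: choose $A_\eta$ nearly realizing the infimum at the lower (respectively upper) contact point, use the maximum of the incremental quotient to get $L_{A_\eta}w(x_0)\le0$, split $g(x_0+e,v(x_0+e))-g(x_0,v(x_0))$ by inserting $g(x_0+e,v(x_0))$, and invoke \eqref{Lipschitz.condition.g} and \eqref{monotonicity.condition.g}. The regularization is where you diverge. The paper deliberately regularizes $u=v(\cdot+e)$ and $v$ themselves, not $\bar v=v-\phi$, and spends several displayed lines showing that the sup- and inf-convolutions of these merely linearly growing functions are still well defined, with the maximizing point at distance $\le\sqrt{\epsilon}R$ with $R$ uniform in $\epsilon,x$. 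The payoff of that extra work is that the maximum of $u^\epsilon-v_\epsilon$ produces directly $\delta(v,x_\epsilon^*+e,y)-\delta(v,x_{\epsilon,*},y)\le0$, with no $\phi$-error to track.

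Your route (regularize $\bar v$) avoids the growth estimates, but it is not as frictionless as you indicate. If you maximize $F(x)=\bar v^\epsilon(x+e)-\bar v_\epsilon(x)+\phi(x+e)-\phi(x)$ — and you must include the $\phi$-increment, otherwise you are estimating $\sup(\bar v(x+e)-\bar v(x))$ rather than $\sup(v(x+e)-v(x))$ — the extremality condition is $\delta(F,x_\epsilon,y)\le0$, \emph{not} $\delta(\bar v^\epsilon(\cdot+e)-\bar v_\epsilon,x_\epsilon,y)\le0$ as you wrote. The difference matters: the naive $\phi$-term that this produces, $L_{A_\eta}\phi(x_\epsilon+e)-L_{A_\eta}\phi(x_\epsilon)$, involves points a fixed distance $|e|$ apart and therefore does \emph{not} vanish as $\epsilon\to0$, unlike the situation in Theorem \ref{them.comparison} where the two reference points collapse to each other. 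To make your variant work you must rearrange so that only $\phi$-differences over distances $O(\sqrt\epsilon)$ survive, e.g.
\[
L_{A_\eta}v(x_\epsilon^*+e)-L_{A_\eta}v(x_{\epsilon,*})\ \le\ \bigl[L_{A_\eta}\phi(x_\epsilon^*+e)-L_{A_\eta}\phi(x_\epsilon+e)\bigr]-\bigl[L_{A_\eta}\phi(x_{\epsilon,*})-L_{A_\eta}\phi(x_\epsilon)\bigr],
\]
and then use continuity of $L_{A_\eta}\phi$ together with $|x_\epsilon^*-x_\epsilon|,|x_{\epsilon,*}-x_\epsilon|=O(\sqrt\epsilon)$. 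With that rearrangement your approach is sound; it trades the growth estimates for the sup-/inf-convolution against this $\phi$-bookkeeping, which is precisely the trade-off the paper flags when it says ``in the proof of Theorem \ref{them.comparison} we were dealing with the regularization of $v-\phi$, a bounded function'' and then opts to regularize $v$ instead.
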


\begin{proof}
The following proof uses a regularization process similar to the proof of Theorem \ref{them.comparison}. For the sake of clarity, let us  present first the main ideas assuming that $v$ is a classical solution. 

Fix $e\in\mathbb{R}^n$ and consider  the first-order incremental quotient $v(x+e)-v(x)$.
Observe that
\[
v(x+e)-v(x)= (v-\phi)(x+e)-(v-\phi)(x)+\phi(x+e)-\phi(x)\leq o(1)+\textnormal{Lip}(\phi)\,|e| 
\]
as $|x|\to\infty$, and therefore $v(x+e)-v(x)$ is bounded above. Furthermore, we can assume  that 
\[
\sup_{x\in\mathbb{R}^n}\big(v(x+e)-v(x)\big)>\textnormal{Lip}(\phi)\,|e|, 
\]
since we are done otherwise. Then, there exists some  $x_0$ such that
\[
v(x_0+e)-v(x_0)=\sup_{x\in\mathbb{R}^n}\big(v(x+e)-v(x)\big).
\]

Fix $\eta>0$, arbitrary, and let $A_\eta>0$ with $\det A_\eta=1$ such that 
\[
L_{A_\eta} v(x_0)\leq g\left(x_0,v(x_0)\right)+\eta,
\]
and
\[
L_{A_\eta} v(x_0+e)\geq \mathcal{D}_s  v(x_0+e)\geq g\left(x_0+e,v(x_0+e)\right),
\]
with $L_{A_\eta}$  defined as in \eqref{defn.L_A}.

We have from the above expressions that
\[
L_{A_\eta} v(x_0+e)-L_{A_\eta} v(x_0)\geq g\left(x_0+e,v(x_0+e)\right)-g\left(x_0,v(x_0)\right)-\eta.
\]
Notice that $\delta\big(v(\cdot+e)-v,x_0,y\big)\leq0$, and therefore $L_{A_\eta}\big(v(x_0+e)-v(x_0)\big)\leq0$.
Consequently,
\[
g\left(x_0+ e,v(x_0+ e)\right)-g\left(x_0,v(x_0)\right)\pm g\left(x_0+e,v(x_0)\right)\leq \eta.
\]
At his point we can let $\eta\to0$ and, using \eqref{Lipschitz.condition.g} and \eqref{monotonicity.condition.g},  get 
\begin{equation}\label{first.ineq.lip.proof}
v(x+e)-v(x) \leq  v(x_0+e)-v(x_0) \leq \frac{\textrm{Lip}(g)}{\mu} |e|. 
\end{equation}

A symmetric argument, where $x_0$ is a point such that
\[
v(x_0+e)-v(x_0)=\inf_{x\in\mathbb{R}^n}\big(v(x+e)-v(x)\big)<-\textnormal{Lip}(\phi)\,|e|, 
\]
and
the operator $L_{A_\eta}$ is such that,
\[
L_{A_\eta} v(x_0+e)\leq g\left(x_0+e,v(x_0+e)\right)+\eta,
\]
and
\[
L_{A_\eta} v(x_0)\geq g\left(x_0,v(x_0)\right)
\]
yields
\[
g\left(x_0,v(x_0)\right)-g\left(x_0+ e,v(x_0+ e)\right)\pm g\left(x_0+e,v(x_0)\right)\leq 0. 
\]
and from there,
\[
 -\frac{\textrm{Lip}(g)}{\mu} |e|\leq  v(x_0+e)-v(x_0) \leq v(x+e)-v(x) .
\]

\medskip

In general, in the above argument we cannot guarantee that $v$ is regular enough so that  both 
$L_{A_\eta} v(x_0+e)$ and $L_{A_\eta} v(x_0)$ are well-defined and the corresponding equations hold in the classical sense.

To complete the argument, we are going to  use a regularization process similar to the one in the proof of Theorem \ref{them.comparison}. Let us show the details in the proof of \eqref{first.ineq.lip.proof}.

To simplify the notation in the sequel, let us denote $u(x)=v(x+e)$ and  consider the sup- and inf-convolution of $ u, $ and $ v$, respectively,
\[
 u^\epsilon(x)=\sup_{y}
\left\{  u(y)- \frac{|x-y|^2}{\epsilon}\right\} =\sup_{y}
\left\{  v(y+e)- \frac{|x-y|^2}{\epsilon}\right\} 
\]
and 
\[
 v_\epsilon(x)=\inf_{y}
\left\{  v(y)+ \frac{|x-y|^2}{\epsilon}\right\}.
\]

In the proof of Theorem \ref{them.comparison} we were dealing with the regularization of $v-\phi$, a bounded function. In our case, $v$ is not bounded but its growth at infinity is controlled by $\phi$, which allows to prove the following:
\begin{enumerate}
 \item $u^\epsilon(x)$  is bounded above. Specifically,  there exists a constant $C>0$ depending only on $\phi$ and $\|v-\phi\|_\infty$ such that   $u^\epsilon(x)\leq C(1+|x+e|)$.
 To see this, notice that by our hypotheses on $\phi$,
 \[
 \phi(x)\leq a|x|^{-\epsilon}+\Gamma(x)\leq a|x|^{-\epsilon}+b|x|\leq a+b|x|
 \]
 for $|x|$ large enough, where $b$ depends on the convexity of the sections of $\Gamma$. Since $\phi$ is bounded near 0, we conclude that $ \phi(x)\leq a+b|x|$ for all $x$, maybe for a different constant $a$. Since $v-\phi$ is bounded,
 \[
\begin{split}
u^\epsilon(x)&=\sup_{y}
\left\{  (v-\phi)(y+e)+\phi(y+e)- \frac{|x-y|^2}{\epsilon}\right\}\\
& \leq
\sup_{y}
\left\{  \|v-\phi\|_\infty+a+b|y+e|- \frac{|x-y|^2}{\epsilon}\right\} \leq\\
& \leq
\|v-\phi\|_\infty+a+b|x+e|+
\sup_{y}
\left\{  b|x-y|- \frac{|x-y|^2}{\epsilon}\right\} \leq\\
& \leq
\|v-\phi\|_\infty+a+b|x+e|+
b^2\epsilon\leq C(1+|x+e|).
\end{split}
 \]

 \item As a consequence, the supremum in the definition of $u^\epsilon(x)$ is finite, 
 and for any given $\delta>0$ there exists
 $x_\delta$ such that,
\[
 u^\epsilon(x)= \sup_{y}
\left\{ u(y)- \frac{|x-y|^2}{\epsilon}\right\} \leq  u(x_\delta)- \frac{|x-x_\delta|^2}{\epsilon}+\delta.
\]

 \item The supremum in the definition of $u^\epsilon$ is achieved. In fact,
\[
  u^\epsilon(x)\leq\sup_{|y-x|\leq\sqrt\epsilon R}
\left\{  u(y)- \frac{|x-y|^2}{\epsilon}\right\}
= u(x^*)- \frac{|x-x^*|^2}{\epsilon}
 \]
 for some $x^*$ such that
$|x-x^*|\leq\sqrt\epsilon R$, where $R$ depends on $\textnormal{Lip}(\phi)$ and $ \|v-\phi\|_\infty$ but can be chosen independent of $\epsilon$ and $x$.

 To see this, fix $\delta<1$ and notice that 
$
u(x)\leq u^\epsilon(x) \leq   u(x_\delta)- \frac{|x-x_\delta|^2}{\epsilon}+\delta.
$
We conclude 
\[
\begin{split}
\frac{|x - x_\delta|^2}{\epsilon}&\leq (v-\phi)(x_\delta+e)- (v-\phi)(x+e)+\textnormal{Lip}(\phi)\,|x_\delta-x|+\delta\\
&\leq 2\|v-\phi\|_\infty+\textnormal{Lip}(\phi)\,|x_\delta-x|+1.
\end{split}
\]
From this expression, it follows that $|x - x_\delta|<\sqrt\epsilon R$ for some $R$ as before ($\sqrt\epsilon R$ is basically the larger root of the quadratic polynomial in $|x - x_\delta|$).
Therefore, 
   \[
  u^\epsilon(x)\leq\sup_{|y-x|\leq\sqrt\epsilon R}
\left\{  u(y)- \frac{|x-y|^2}{\epsilon}\right\}+\delta.
 \]
Since $\delta$ is arbitrary, we can let $\delta\to0$ and conclude that the supremum in the definition of $  u^\epsilon$ is achieved.

\item Analogous properties hold for  $v_\epsilon$. Notice that property (1) is simpler,
\[
 v_\epsilon(x)=\inf_{y}
\left\{  v(y)+ \frac{|x-y|^2}{\epsilon}\right\}=\inf_{y}
\left\{  (v-\phi)(y)+\phi(y)+ \frac{|x-y|^2}{\epsilon}\right\}\geq\inf(v-\phi)>-\infty.
\]
\end{enumerate}

\medskip

We are ready now to complete the proof. Following the formal argument above, we can assume that there exists $x_0$ such that
\[
v(x_0+e)-v(x_0)=\sup_{x\in\mathbb{R}^n}\big(v(x+e)-v(x)\big)>\textnormal{Lip}(\phi)\,|e|, 
\]

First, we need to prove that there exists $x_\epsilon$ such that
\[
(u^\epsilon-v_\epsilon)(x_\epsilon)=\sup_{x\in\mathbb{R}^n}(u^\epsilon-v_\epsilon).
\]
To see this, observe that
\[
\sup_{x\in\mathbb{R}^n}(u^\epsilon-v_\epsilon)\geq(u^\epsilon-v_\epsilon)(x_0)\geq(u-v)(x_0)=\sup_{x\in\mathbb{R}^n}\big(v(x+e)-v(x)\big)>\textnormal{Lip}(\phi)\,|e|.
\]
On the other hand,
\[
\begin{split}
 (u^\epsilon-v_\epsilon)(x)&=u(x^*)- \frac{|x-x^*|^2}{\epsilon}-v(x_*)- \frac{|x-x_*|^2}{\epsilon}\\
 &\leq (v-\phi)(x^*+e)-(v-\phi)(x_*)+\textnormal{Lip}(\phi)\,(2\sqrt\epsilon R+|e|).
\end{split}
\]
Therefore, for $\epsilon$ small enough,
$\textnormal{Lip}(\phi)\,(2\sqrt\epsilon R+|e|)< \sup_{x\in\mathbb{R}^n}(u^\epsilon-v_\epsilon)$
and
\[
 (u^\epsilon-v_\epsilon)(x)<o(1)+\sup_{x\in\mathbb{R}^n}(u^\epsilon-v_\epsilon)\qquad\textnormal{as}\  |x|\to\infty.
\]

Following the proof of Theorem \ref{them.comparison}, we can prove that  both $ u^\epsilon$ and $  v_\epsilon$ are $\mathcal{C}^{1,1}$ at $x_\epsilon$, so that  the integrals in the subsequent computations are well defined. The idea is that the paraboloids
\[
\frac{|x-x_{\epsilon,*}|^2}{\epsilon}+v(x_{\epsilon,*})-  v_\epsilon(x_\epsilon)+ u^\epsilon(x_\epsilon)
\]
and
\[
- \frac{|x-x_\epsilon^*|^2}{\epsilon}+  v_\epsilon(x_\epsilon)+u(x_\epsilon^*)- u^\epsilon(x_\epsilon)
\]
touch respectively $ u^\epsilon$  from above and $ v_\epsilon$ from below   at the point $x_\epsilon$.  Therefore, $ u^\epsilon$ and $ v_\epsilon$ can both be touched from above and below by a paraboloid at $x_\epsilon$ and they are $\mathcal{C}^{1,1}$ at $x_\epsilon$.

Since $ u^\epsilon\in\mathcal{C}^{1,1}$ at $x_\epsilon$, there exists a paraboloid $P(x)$ that touches $ u^\epsilon$ from above at $x_\epsilon$. Then
\[
P(x+x_{\epsilon}-x_{\epsilon}^*)+\frac{|x_{\epsilon}-x_{\epsilon}^*|^2}{\epsilon}
\]
touches $ u$ from above at $x_{\epsilon}^*$. Equivalently,
\[
P(x-e+x_{\epsilon}-x_{\epsilon}^*)+\frac{|x_{\epsilon}-x_{\epsilon}^*|^2}{\epsilon}
\]
touches $ v$ from above at $x_{\epsilon}^*+e$.
 On the other hand, there exists a paraboloid $Q(x)$ that touches $ v_\epsilon$ from below at $x_\epsilon$ and then
\[
Q(x+x_{\epsilon}-x_{\epsilon,*})-\frac{|x_{\epsilon}-x_{\epsilon,*}|^2}{\epsilon}
\]
touches $v$ from below at $x_{\epsilon,*}$.   By Lemma \ref{viscosidad.clasico} we have
\[
\mathcal{D}_s  v(x_\epsilon^*+e)\geq g\big(x_\epsilon^*+e,v(x_\epsilon^*+e)\big),
\qquad \text{and} \qquad
\mathcal{D}_s v(x_{\epsilon,*})\leq g\big(x_{\epsilon,*},v(x_{\epsilon,*})\big)
\]
in the classical sense.

Fix $\eta>0$, arbitrary, and let $A_\eta>0$ such that 
\[
L_{A_\eta} v(x_{\epsilon,*})\leq  g\big(x_{\epsilon,*},v(x_{\epsilon,*})\big)+\eta,
\]
and
\[
L_{A_\eta} v(x_\epsilon^*+e)\geq \mathcal{D}_s  v(x_\epsilon^*+e)\geq g\big(x_\epsilon^*+e,u(x_\epsilon^*+e)\big),
\]
with $L_{A_\eta}$  defined as in \eqref{defn.L_A}.
Subtracting, we get
\[
 L_{A_\eta} v(x_\epsilon^*+e)-L_{A_\eta} v(x_{\epsilon,*})\geq g\big(x_\epsilon^*+e,v(x_\epsilon^*+e)\big)-g\big(x_{\epsilon,*},v(x_{\epsilon,*})\big)- \eta.
\]
By definition of the operator $L_{A_\eta}$, we have
\[
L_{A_\eta} v(x_\epsilon^*+e)-L_{A_\eta} v(x_{\epsilon,*})=\frac{1}{2}
\int_{\mathbb{R}^n}\frac{\delta(u,x_\epsilon^*,y)- \delta(v,x_{\epsilon,*},y)}{|A_\eta^{-1}y|^{n+2s}}\, dy.
\]
Notice that, as in the proof of Theorem \ref{them.comparison},
\[
  \delta( u^\epsilon,x_\epsilon,y)\geq   \delta( u,x_\epsilon^*,y)\qquad\text{and}\qquad
 \delta( v_\epsilon,x_\epsilon,y)\leq   \delta( v,x_{\epsilon,*},y).
\]
 Observe that  $x_\epsilon$ is a maximum point of $ u^\epsilon- v_\epsilon$ and therefore $\delta( u^\epsilon-v_\epsilon,x_\epsilon,y)\leq0$. We conclude
 \[
 \begin{split}
\eta&\geq g\big(x_\epsilon^*+e,v(x_\epsilon^*+e)\big)-g\big(x_\epsilon^*+e,v(x_{\epsilon,*})\big)+g\big(x_\epsilon^*+e,v(x_{\epsilon,*})\big)-g\big(x_{\epsilon,*},v(x_{\epsilon,*})\big)\\
&\geq\mu\,\big(v(x_\epsilon^*+e)-v(x_{\epsilon,*})\big)-\textnormal{Lip}(g)\,|x_\epsilon^*+e-x_{\epsilon,*}|.\end{split}
 \]
 Notice that
 \[
v(x_\epsilon^*+e)-v(x_{\epsilon,*})\geq(u^\epsilon-v_\epsilon)(x_{\epsilon})=\sup_{\mathbb{R}^n}(u^\epsilon-v_\epsilon)\geq(u^\epsilon-v_\epsilon)(x_0)\geq(u-v)(x_0)=\sup_{\mathbb{R}^n}(u-v)
 \]
 Since $\eta$ is arbitrary, we can let $\eta\to0$ and get
 \[
 \begin{split}
\mu\sup_{x\in\mathbb{R}^n}\big(v(x+e)-v(x)\big)\leq \textnormal{Lip}(g)\,\big(|e|+|x_\epsilon^*-x_{\epsilon}|+|x_\epsilon-x_{\epsilon,*}|\big)\leq \textnormal{Lip}(g)\,\big(|e|+2\sqrt\epsilon R\big).
\end{split}
 \]
The result follows letting $\epsilon\to0$.
 \end{proof}

\medskip

In the next result we show that solutions to \eqref{main.problem.g}  are semiconcave, informally, that second derivatives of solutions to \eqref{main.problem.g}  are bounded from above, under certain conditions on  the right-hand side $g$. Before stating the result, let us identify  heuristically  the natural hypotheses on $g$ in our context if semiconcavity is expected from the solutions.

To simplify, consider instead of $\mathcal{D}_s$ a linear operator $L_A$ (defined as in \eqref{defn.L_A}) such that
\[
L_{A} v(x)= g\left(x,v(x)\right).
\]

Formally, we  have that $D_{ee}^2v(x_0)$ satisfies
\[
L_{A_\eta} D_{ee}^2v(x_0)=
\sum_{1\leq i,j\leq n} 
\partial^2_{x_ix_j}g(x_0,v(x_0))e_ie_j.
\]
where $\sum_{1\leq i,j\leq n} 
\partial^2_{x_ix_j}g(x_0,v(x_0))e_ie_j$ is the second derivative in the direction $e$, at the point $x_0$, of the composite function $x\mapsto g(x,v(x))$. Now, if $x_0$ is a maximum point of $D_{ee}^2v$ we get
\[
\sum_{1\leq i,j\leq n} 
\partial^2_{x_ix_j}g(x_0,v(x_0))e_ie_j=L_{A_\eta} D_{ee}^2v(x_0)\leq0.
\]
It can be checked that
\[
\begin{split}
\big[\partial^2_{x_ix_j}g(x,v(x))\big]_{1\leq i,j\leq n}=& 
\left[
\begin{array}{cc}
 I_{n\times n} & \nabla v (x)^t
\end{array}
\right]_{n\times (n+1)}
\left[\partial^2_{i,j}g(x,v(x)) \right]_{1\leq i,j\leq n+1}
\left[
\begin{array}{c}
 I_{n\times n} \\ \nabla v (x)
\end{array}
\right]_{(n+1)\times n}\\
&+\partial_{n+1}g(x,v(x)) \, D^2v(x)
\end{split}
\]
where $\partial^2_{i,j}g(x,v(x))$ and $\partial_{n+1}g(x,v(x))$ denote derivatives of $g$ as a function of  $n+1$ variables  evaluated at the point $(x,v(x))$.
Writing $\xi=(
 e^t, \ \langle\nabla v (x),e\rangle
)^t$ for convenience, we have
\[
0 \geq \sum_{1\leq i,j\leq n} 
\partial^2_{x_ix_j}g(x_0,v(x_0))e_ie_j =
\sum_{1\leq i,j\leq n+1} \partial^2_{i,j}g(x_0,v(x_0))\xi_i\xi_j 
+\partial_{n+1}g(x_0,v(x_0)) \, D_{ee}^2v(x_0)
\]
or equivalently,
\[
\partial_{n+1}g(x_0,v(x_0)) \, D_{ee}^2v(x_0)\leq -\sum_{1\leq i,j\leq n+1} \partial^2_{i,j}g(x_0,v(x_0))\xi_i\xi_j .
\]
This inequality suggests that in order to get an upper bound on $D_{ee}^2v(x_0)$ it is natural to require  $D^2g\geq -C \, Id$ and $\partial_{n+1}g(x_0,v(x_0))>\mu>0$,  namely hypotheses \eqref{semiconvexity.condition.g} and \eqref{monotonicity.condition.g}, since then
\[
\mu\,D_{ee}^2v(x_0)\leq- \sum_{1\leq i,j\leq n+1} \partial^2_{i,j}g(x_0,v(x_0))\xi_i\xi_j \leq C\, |\xi|^2 
\leq C\, (1+|\nabla v(x_0) |^2).
\]
From here  we have the desired estimate as long as we can guarantee that $v$ is Lipschitz. In  Proposition \ref{lemma2} we proved that this is actually the case provided hypotheses \eqref{Lipschitz.condition.g}, and \eqref{monotonicity.condition.g} hold true.  

\medskip

In the following result we justify the heuristic argument above.

\begin{proposition}[Semiconcavity  of the solution]\label{lemma3}
Let $g:
\mathbb{R}^{n+1}
\to\mathbb{R}$ satisfy \eqref{semiconvexity.condition.g}, \eqref{Lipschitz.condition.g}, and \eqref{monotonicity.condition.g}.
Then, the solution to \eqref{main.problem.g}
is semiconcave, that is, for every $x\in\mathbb{R}^n$,\
\[
\delta(v,x,y) \leq \frac{C}{\mu} \left(1+\max\left\{\left(\frac{\textnormal{Lip}(g)}{\mu} \right)^2, \textnormal{Lip}(\phi)^2\right\}\right) |y|^2.
\]
\end{proposition}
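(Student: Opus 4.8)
I plan to bound $\sup_{x\in\mathbb{R}^n}\delta(v,x,e)$ for each fixed $e\in\mathbb{R}^n$, following the heuristic preceding the statement at a maximum point of $\delta(v,\cdot,e)$, and then removing the regularity assumption on $v$ with a sup/inf-convolution argument parallel to Proposition~\ref{lemma2} and Theorem~\ref{them.comparison}. First I would note that $\delta(v,x,e)=\delta(v-\phi,x,e)+\delta(\phi,x,e)\to0$ as $|x|\to\infty$: the first term vanishes since $v-\phi\to0$, and the second vanishes because near infinity $\phi=\Gamma+\eta$ with $\Gamma$ a cone (so $D^2\Gamma$ is homogeneous of degree $-1$) and $D^2\eta(x)\to0$. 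Thus either $\sup_x\delta(v,x,e)\le0$, in which case there is nothing to prove, or the supremum is attained at some $x_0$, the case I would treat.

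At the formal level, suppose $v$ is regular enough to evaluate $\mathcal{D}_s$ classically. For $\eta>0$ choose $A_\eta>0$ with $\det A_\eta=1$ so that $L_{A_\eta}v(x_0)\le\mathcal{D}_sv(x_0)+\eta=g(x_0,v(x_0))+\eta$; since $L_A\ge\mathcal{D}_s$ for every admissible $A$, the same matrix gives $L_{A_\eta}v(x_0\pm e)\ge g(x_0\pm e,v(x_0\pm e))$. As $x_0$ maximizes $\delta(v,\cdot,e)$ we have $\delta\big(\delta(v,\cdot,e),x_0,z\big)\le0$ for every $z$, hence $L_{A_\eta}\delta(v,x_0,e)\le0$; combining the three operator inequalities and letting $\eta\to0$ gives
\[
g(x_0+e,v(x_0+e))+g(x_0-e,v(x_0-e))-2g(x_0,v(x_0))\le0.
\]
Applying the semiconvexity \eqref{semiconvexity.condition.g} of $g$ on $\mathbb{R}^{n+1}$ to $z_1=(x_0+e,v(x_0+e))$ and $z_2=(x_0-e,v(x_0-e))$, whose midpoint is $\big(x_0,\tfrac12(v(x_0+e)+v(x_0-e))\big)$ and for which $|z_1-z_2|^2=4|e|^2+|v(x_0+e)-v(x_0-e)|^2$, yields $g\big(x_0,\tfrac12(v(x_0+e)+v(x_0-e))\big)-g(x_0,v(x_0))\le\tfrac{C}{8}\big(4|e|^2+|v(x_0+e)-v(x_0-e)|^2\big)$. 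Since $\tfrac12(v(x_0+e)+v(x_0-e))-v(x_0)=\tfrac12\delta(v,x_0,e)>0$, the monotonicity \eqref{monotonicity.condition.g} bounds the left side below by $\tfrac{\mu}{2}\delta(v,x_0,e)$; inserting the uniform bound $|v(x_0+e)-v(x_0-e)|\le2\max\{\textnormal{Lip}(g)/\mu,\textnormal{Lip}(\phi)\}\,|e|$ from Proposition~\ref{lemma2} and solving for $\delta(v,x_0,e)$ gives precisely the claimed constant.

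To make this rigorous I would regularize the three relevant translates separately: let $p^\epsilon,q^\epsilon$ be the sup-convolutions of $v(\cdot+e),v(\cdot-e)$ and $v_\epsilon$ the inf-convolution of $v$, so $\Phi^\epsilon:=p^\epsilon+q^\epsilon-2v_\epsilon\ge\delta(v,\cdot,e)$. As in Proposition~\ref{lemma2}, the linear growth of $v$ (controlled by $\phi$) makes the convolutions finite, with the suprema/infima attained within $\sqrt\epsilon\,R$ of the base point for an $R$ independent of $\epsilon$, and $\Phi^\epsilon$ attains its maximum at some $x_\epsilon$ for small $\epsilon$ in the nontrivial case. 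At $x_\epsilon$, the maximality of $\Phi^\epsilon$ and the one-sided paraboloid properties of sup/inf-convolutions force $p^\epsilon,q^\epsilon,v_\epsilon$ to be $\mathcal{C}^{1,1}$ there; translating the touching paraboloids back produces points $x_\epsilon^*+e$, $x_\epsilon^{**}-e$, $x_{\epsilon,*}$, each within $2\sqrt\epsilon\,R$ of the configuration $x_\epsilon\pm e,\ x_\epsilon$, at which $v$ is touched from above (the first two) and below (the last), so Lemma~\ref{viscosidad.clasico} makes the corresponding $\mathcal{D}_s$-inequalities classical. Picking $A_\eta$ optimal for the supersolution inequality at $x_{\epsilon,*}$, combining the three operator inequalities, and using $\delta(p^\epsilon,x_\epsilon,y)\ge\delta(v,x_\epsilon^*+e,y)$, $\delta(q^\epsilon,x_\epsilon,y)\ge\delta(v,x_\epsilon^{**}-e,y)$, $\delta(v_\epsilon,x_\epsilon,y)\le\delta(v,x_{\epsilon,*},y)$ and $\delta(\Phi^\epsilon,x_\epsilon,y)\le0$, I obtain after $\eta\to0$
\[
g(x_\epsilon^*+e,v(x_\epsilon^*+e))+g(x_\epsilon^{**}-e,v(x_\epsilon^{**}-e))-2g(x_{\epsilon,*},v(x_{\epsilon,*}))\le0.
\]
Then I would apply \eqref{semiconvexity.condition.g} to $z_1=(x_\epsilon^*+e,v(x_\epsilon^*+e))$ and $z_2=(x_\epsilon^{**}-e,v(x_\epsilon^{**}-e))$, absorb the $O(\sqrt\epsilon)$ discrepancy between their midpoint and $x_{\epsilon,*}$ using \eqref{Lipschitz.condition.g}, observe that $\tfrac12(v(x_\epsilon^*+e)+v(x_\epsilon^{**}-e))-v(x_{\epsilon,*})\ge\tfrac12\sup_x\delta(v,x,e)>0$ (from the value of $\Phi^\epsilon$ at its maximum), invoke \eqref{monotonicity.condition.g} and the Lipschitz bound of Proposition~\ref{lemma2}, and let $\epsilon\to0$, reaching $\sup_x\delta(v,x,e)\le\frac{C}{\mu}\big(1+\max\{(\textnormal{Lip}(g)/\mu)^2,\textnormal{Lip}(\phi)^2\}\big)|e|^2$. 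The routine part is the convolution bookkeeping; the main obstacle, as already in Theorem~\ref{them.comparison} and Proposition~\ref{lemma2}, is organizing the regularization so that all three operators $L_{A_\eta}v$ can be evaluated classically at one comparison step — which here forces one to regularize the \emph{three} translates $v(\cdot+e),v(\cdot-e),v$ at once and to keep the resulting base points $O(\sqrt\epsilon)$-close to the symmetric configuration, so that the semiconvexity inequality for $g$, which needs its two evaluation points in near-midpoint position, applies with a vanishing error.
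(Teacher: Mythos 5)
Your proposal is correct and follows essentially the same route as the paper: the same formal inequality $g(x_0+e,v(x_0+e))+g(x_0-e,v(x_0-e))-2g(x_0,v(x_0))\le0$ obtained by testing with a near-optimal $L_{A_\eta}$ at $x_0$ and using $L_A\ge\mathcal{D}_s$ at $x_0\pm e$, the same sup/inf-convolution regularization of the three translates $v(\cdot\pm e),v$ with $\mathcal{C}^{1,1}$ touching at the max of $u^\epsilon+w^\epsilon-2v_\epsilon$, and the same final appeal to Proposition~\ref{lemma2}. The only (harmless) cosmetic difference is that you apply the semiconvexity of $g$ at the midpoint of $\big(x_0+e,v(x_0+e)\big)$ and $\big(x_0-e,v(x_0-e)\big)$ and then bridge to $(x_0,v(x_0))$ by monotonicity, whereas the paper rearranges into a symmetric difference about $(x_0,v(x_0))$ with $\theta_1=\big(e,v(x_0+e)-v(x_0)\big)$; both yield the identical bound $\delta(v,x_0,e)\le\frac{C}{\mu}\big(1+\textnormal{Lip}(v)^2\big)|e|^2$.
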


\begin{proof}
Let $v$ be the solution to problem \eqref{main.problem.g}, $e\in\mathbb{R}^n$ fixed,
and assume that 
\[
\sup_{x\in\mathbb{R}^n} \delta(v,x,e)>0,
\]
as the result is trivial otherwise. We observe that $\delta(v,x,e)\to0$ as $|x| \to\infty$. To see this, notice first that $\delta(v,x,e)=\delta(v- \phi,x,e)+\delta(\phi,x,e)=o(1)+\delta(\phi,x,e)$  as $|x| \to\infty$. Also, by our hypotheses on $\phi$, we have that
\[
\frac{\delta(\phi,x,e)}{|e|^2}=O\left(\frac{1}{|x|}\right)\quad\text{as}\ |x| \to\infty.
\]
Therefore, there is some
$x_0$ such that
\begin{equation}\label{ineedthisonelater}
\delta(v,x_0,e)=\sup_{x\in\mathbb{R}^n} \delta(v,x,e)>0.
\end{equation}

To complete the proof we need a regularization process as in the proof of Proposition \ref{lemma2}. Again, let us  present the ideas first assuming that $v$ is a classical solution and all the equations hold pointwise.

Fix $\eta>0$ arbitrary, and let $A_\eta>0$  such that 
\[
L_{A_\eta} v(x_0)\leq g\left(x_0,v(x_0)\right)+\eta,
\]
and
\[
L_{A_\eta} v(x_0\pm e)\geq \mathcal{D}_s  v(x_0\pm e)\geq g\big(x_0\pm e, v(x_0\pm e)\big),
\]
with $L_{A_\eta}$  defined as in \eqref{defn.L_A}.
We have from the above expressions that
\[
L_{A_\eta}\delta(v,x_0,e)\geq g\left(x_0+ e,v(x_0+ e)\right)+g\left(x_0- e,v(x_0- e)\right)-2g\left(x_0,v(x_0)\right)-2\eta.
\]
Notice that $\delta\big(\delta(v,\cdot \, ,e),x_0,z\big)\leq0$, and therefore $L_{A_\eta}\delta(v,x_0,e)\leq0$.
Consequently,
\[
g\left(x_0+ e,v(x_0+ e)\right)+g\left(x_0- e,v(x_0- e)\right)-2g\left(x_0,v(x_0)\right)\leq2\eta.
\]
At this point we can let $\eta\to0$ and rewrite the resulting expression as
\[
g\big((x_0,v(x_0))+ \theta_2\big) - g\big((x_0,v(x_0))- \theta_1\big)
\leq
2g(x_0,v(x_0))
- g\big((x_0,v(x_0))+ \theta_1\big) - g\big((x_0,v(x_0))- \theta_1\big)
\]
for
$\theta_1=
\big(
e,v(x_0+e)-v(x_0)
\big)$
and
$\theta_2
=\big(
-e,v(x_0-e)-v(x_0)
\big)
$. Then, by \eqref{semiconvexity.condition.g} and \eqref{monotonicity.condition.g} we have
\[
\begin{split}
 \mu \,\delta(v,x_0,e)&\leq g\big(x_0-e,v(x_0-e)\big)-g\big(x_0-e,2v(x_0)-v(x_0+e)\big)
\\
&=
g\big((x_0,v(x_0))+ \theta_2\big)- g\big((x_0,v(x_0))- \theta_1\big)
\\
&\leq
2g(x_0,v(x_0))
- g\big((x_0,v(x_0))+ \theta_1\big) - g\big((x_0,v(x_0))- \theta_1\big)
\leq C| \theta_1|^2
\end{split}
\]
and therefore, for any $x\in\mathbb{R}^n$, 
\[
\delta(v,x,e) \leq\delta(v,x_0,e) \leq \frac{C}{\mu} \left(1+\left(\frac{v(x_0+e)-v(x_0)}{|e|}\right)^2\right) |e|^2.
\]
The result follows applying Proposition \ref{lemma2}. 

\medskip

To complete the proof in the general case, let us sketch the regularization procedure. The details follow the lines of the proof of Proposition \ref{lemma2}. To simplify the notation, let us denote $u(x)=v(x+e),$ $w(x)=v(x-e)$ and  consider the sup-convolution of $u,w$ and the inf-convolution of $ v$, namely,
\[
 u^\epsilon(x)=\sup_{y}
\left\{  u(y)- \frac{|x-y|^2}{\epsilon}\right\} =\sup_{y}
\left\{  v(y+e)- \frac{|x-y|^2}{\epsilon}\right\}=
  v(x^*+e)- \frac{|x-x^*|^2}{\epsilon},
\]
\[
 w^\epsilon(x)=\sup_{y}
\left\{  w(y)- \frac{|x-y|^2}{\epsilon}\right\} =\sup_{y}
\left\{  v(y-e)- \frac{|x-y|^2}{\epsilon}\right\}=
  v(x^{**}-e)- \frac{|x-x^{**}|^2}{\epsilon},
\]
and 
\[
 v_\epsilon(x)=\inf_{y}
\left\{  v(y)+ \frac{|x-y|^2}{\epsilon}\right\}=
  v(x_*)+ \frac{|x-x_*|^2}{\epsilon}.
\]
for some points $x^*,x^{**},$ and $x_*$ within a distance $\sqrt{\epsilon}R$ from $x$ (see property 3 in the proof of Proposition \ref{lemma2}).

Assume \eqref{ineedthisonelater}. Then, on the one hand, we have that
\[
\sup_{\mathbb{R}^n}(u^\epsilon+w^\epsilon-2v_\epsilon)\geq
u^\epsilon(x_0)+w^\epsilon(x_0)-2v_\epsilon(x_0)\geq
\delta(v,x_0,e)=\sup_{x\in\mathbb{R}^n} \delta(v,x,e)>0.
\]
On the other hand,
\[
\begin{split}
 u^\epsilon(x)+ w^\epsilon(x)-2 v_\epsilon(x)&\leq   (v-\phi)(x^*+e)+ (v-\phi)(x^{**}-e)- 2  (v-\phi)(x_*)\\
& +\big(\phi(x^*+e)-\phi(x+e)\big)+\big(\phi(x^{**}-e)-\phi(x-e)\big)\\
&- 2 \big(  \phi(x_*)-\phi(x)\big)+\delta(\phi,x,e)\\
&\leq o(1)+4\textrm{Lip}(\phi)\sqrt\epsilon R + O\left(\frac{1}{|x|}\right)
 \end{split}
\]
as $|x|\to\infty$.
Therefore, for $\epsilon$ small enough, there exists $x_\epsilon$ such that
\[
u^\epsilon(x_\epsilon)+w^\epsilon(x_\epsilon)-2v_\epsilon(x_\epsilon)=\sup_{\mathbb{R}^n}(u^\epsilon+w^\epsilon-2v_\epsilon).
\]

Now, consider the following three paraboloids:
 \[
  P(x)= u(x_\epsilon^*)- \frac{|x-x_\epsilon^*|^2}{\epsilon},
  \qquad
  Q(x)= v(x_{\epsilon,*})+ \frac{|x-x_{\epsilon,*}|^2}{\epsilon},
  \]
  and
\[
  R(x)= w(x_\epsilon^{**})- \frac{|x-x_\epsilon^{**}|^2}{\epsilon}
  \]
  Then, all three $u^\epsilon,w^\epsilon$, and $v_\epsilon$ are $\mathcal{C}^{1,1}$ at $x_\epsilon$. To see this, 
notice that
\begin{itemize}
\item    $P(x)$ touches $ u^\epsilon$ from below at $x_\epsilon$ and
\[
2Q(x)-R(x)+u^\epsilon(x_\epsilon)+w^\epsilon(x_\epsilon)-2v_\epsilon(x_\epsilon)
\]
touches from above.

\item   $Q(x)$   touches $ v_\epsilon$ from above at $x_\epsilon$ and
\[
\frac12P(x)+\frac12R(x)+ v_\epsilon(x_\epsilon)-\frac12u^\epsilon(x_\epsilon)-\frac12w^\epsilon(x_\epsilon)
\]
touches from below.

\item  $R(x)$ touches $ w^\epsilon$ from below at $x_\epsilon$  and
\[
2Q(x)-P(x)+u^\epsilon(x_\epsilon)+w^\epsilon(x_\epsilon)-2v_\epsilon(x_\epsilon)
\]
touches from above.

\end{itemize}
Then, there are three paraboloids that touch $v$ from above at $x_\epsilon^*+e$ and $x_\epsilon^{**}-e$, and from below at $x_{\epsilon*}$.    By Lemma \ref{viscosidad.clasico} we have
\[
\mathcal{D}_s  v(x_\epsilon^*+e)\geq g\big(x_\epsilon^*+e,v(x_\epsilon^*+e)\big),\qquad
\mathcal{D}_s  v(x_\epsilon^{**}-e)\geq g\big(x_\epsilon^{**}-e,v(x_\epsilon^{**}-e)\big),
\]
and
\[
\mathcal{D}_s v(x_{\epsilon,*})\leq g\big(x_{\epsilon,*},v(x_{\epsilon,*})\big)
\]
in the classical sense. Fix $\eta>0$, arbitrary, and let $A_\eta>0$  such that 
\[
L_{A_\eta} v(x_{\epsilon,*})\leq  g\big(x_{\epsilon,*},v(x_{\epsilon,*})\big)+\eta
\]
with $L_{A_\eta}$  defined as in \eqref{defn.L_A}. Then
\begin{multline*}
 L_{A_\eta} u(x_\epsilon^*)+ L_{A_\eta} w(x_\epsilon^{**})-2L_{A_\eta} v(x_{\epsilon,*})\\
 \geq g\big(x_\epsilon^*+e,v(x_\epsilon^*+e)\big)+g\big(x_\epsilon^{**}-e,v(x_\epsilon^{**}-e)\big)-2g\big(x_{\epsilon,*},v(x_{\epsilon,*})\big)-2 \eta.
\end{multline*}
As in the proof of Theorem \ref{them.comparison},
\[
  \delta( u,x_\epsilon^*,y)+\delta( w,x_\epsilon^{**},y)-2   \delta( v,x_{\epsilon,*},y)\leq  \delta( u^\epsilon+w^\epsilon-2v_\epsilon,x_\epsilon,y)\leq 0.
\]
Since $\eta$ is arbitrary, we conclude
 \[
 g\big(x_\epsilon^*+e,v(x_\epsilon^*+e)\big)+g\big(x_\epsilon^{**}-e,v(x_\epsilon^{**}-e)\big)-2g\big(x_{\epsilon,*},v(x_{\epsilon,*})\big)\leq0.
 \]
Rearranging terms, we get,
\begin{multline*}
 g\big(x_\epsilon^{**}-e,v(x_\epsilon^{**}-e)\big)
 \pm g\big(x_{\epsilon}^{**}-e,2v(x_{\epsilon,*})-v(x_\epsilon^*+e)\big)
-g\big(2x_{\epsilon,*}-x_\epsilon^*-e,2v(x_{\epsilon,*})-v(x_\epsilon^*+e)\big)
\\
\leq
2g\big(x_{\epsilon,*},v(x_{\epsilon,*})\big)-g\big(x_\epsilon^*+e,v(x_\epsilon^*+e)\big)-g\big(2x_{\epsilon,*}-x_\epsilon^*-e,2v(x_{\epsilon,*})-v(x_\epsilon^*+e)\big). 
\end{multline*}
Let us analyze the left-hand side of the inequality first. By \eqref{monotonicity.condition.g} and \eqref{Lipschitz.condition.g}, we have 
\begin{multline*}
 g\big(x_\epsilon^{**}-e,v(x_\epsilon^{**}-e)\big)
 \pm g\big(x_{\epsilon}^{**}-e,2v(x_{\epsilon,*})-v(x_\epsilon^*+e)\big)
-g\big(2x_{\epsilon,*}-x_\epsilon^*-e,2v(x_{\epsilon,*})-v(x_\epsilon^*+e)\big)
\\
\geq \mu(v(x_\epsilon^*+e)+v(x_\epsilon^{**}-e)-2v(x_{\epsilon,*}))-\textnormal{Lip}(g)|x_\epsilon^*+x_{\epsilon}^{**}-2x_{\epsilon,*}|
\end{multline*}
If we denote
$\theta=(x_\epsilon^*+e-x_{\epsilon,*},v(x_\epsilon^*+e)-v(x_{\epsilon,*}))$ the right-hand side becomes
\begin{multline*}
2g\big(x_{\epsilon,*},v(x_{\epsilon,*})\big)-g\big(x_\epsilon^*+e,v(x_\epsilon^*+e)\big)-g\big(2x_{\epsilon,*}-x_\epsilon^*-e,2v(x_{\epsilon,*})-v(x_\epsilon^*+e)\big)\\
=
2g\big(x_{\epsilon,*},v(x_{\epsilon,*})\big)-g\big((x_{\epsilon,*},v(x_{\epsilon,*}))+\theta)-g\big((x_{\epsilon,*},v(x_{\epsilon,*}))-\theta)\leq C|\theta|^2
\end{multline*}
where in the last step  we have used 
\eqref{semiconvexity.condition.g}.
Therefore,
\[
\mu(v(x_\epsilon^*+e)+v(x_\epsilon^{**}-e)-2v(x_{\epsilon,*}))\leq C|\theta|^2+\textnormal{Lip}(g)|x_\epsilon^*+x_{\epsilon}^{**}-2x_{\epsilon,*}|
\]
Observe that
\begin{multline*}
v(x_\epsilon^*+e)+v(x_\epsilon^{**}-e)-2v(x_{\epsilon,*})\geq(u^\epsilon+w^\epsilon-2v_\epsilon)(x_\epsilon)\\
\geq(u^\epsilon+w^\epsilon-2v_\epsilon)(x_0)\geq(u+w-2v)(x_0)=\delta(v,x_0,e)=\sup_{x\in\mathbb{R}^n}\delta(v,x,e).
\end{multline*}
On the other hand,
\[
|\theta|^2=|x_\epsilon^*+e-x_{\epsilon,*}|^2+|v(x_\epsilon^*+e)-v(x_{\epsilon,*})|^2\leq
(1+\textnormal{Lip}(v)^2)|x_\epsilon^*+e-x_{\epsilon,*}|^2\leq
(1+\textnormal{Lip}(v)^2)(|x_\epsilon^*-x_{\epsilon,*}|+|e|)^2
\]
Finally, recall that $|x_\epsilon^*-x_{\epsilon,*}|\leq2\sqrt{\epsilon}R$ and $|x_\epsilon^*+x_{\epsilon}^{**}-2x_{\epsilon,*}|\leq4\sqrt{\epsilon}R$. All the above together yields,
\[
\mu \sup_{x\in\mathbb{R}^n}\delta(v,x,e)
\leq C(1+\textnormal{Lip}(v)^2)(2\sqrt{\epsilon}R+|e|)^2+4\textnormal{Lip}(g)\sqrt{\epsilon}R.
\]
Now we can let $\epsilon\to0$ and apply Proposition \ref{lemma2} to conclude.
\end{proof}

\medskip

\section{Existence of solutions}\label{sect.existence.final.final}

In this section we prove existence of solutions to the problem
\begin{equation}\label{problem.comparison.intro.sect.exist}
 \left\{
\begin{split}
&\mathcal{D}_{s} u=u-\phi \qquad \text{in}
\quad
\mathbb{R}^n \\
&(u- \phi)(x)\to0\quad\!\text{as}\ |x|\to\infty.
\end{split}
\right.
\end{equation}
One could consider existence for more general problems with a right-hand side $g(x,u)$ as in \eqref{main.problem.g}. The arguments below would work under assumptions on $g$ that guarantee the existence of appropriate sub- and supersolutions as well as comparison (see Section \ref{Sect.comparison}).

\medskip

The idea to find a supersolution is that, by the definition of $\mathcal{D}_{s}$ as an infimum of linear operators, it is enough to have the appropriate inequality for just one of them.
\begin{lemma}\label{lemma.supersol.basic}
Denote  $g(x)=\min\{1,|x|^{-(2s+\tau)}\}$ and $u_F(x)=C_F\cdot|x|^{2s-n}$, the fundamental solution of $(-\Delta)^s$ for an appropriate constant $C_F$.  Then, there exist constants  $0<\tau<\min\{2s-1,n-2s\}$ and $M>0$ such that
$u=\phi+M\cdot \big(u_F*g\big)$
 satisfies
\begin{equation}\label{existence.supersolution}
 \left\{
\begin{split}
-&(- \Delta )^{s}u\leq c_{n,s}(u- \phi)\qquad \text{in}\ 
\mathbb{R}^n \\
&(u- \phi)(x)\to0\quad\!\text{as}\ |x|\to\infty. 
\end{split}
\right.
\end{equation}
\end{lemma}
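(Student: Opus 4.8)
The plan is to take $w:=u_F*g$ and set $u=\phi+Mw$. Since $u_F$ is the fundamental solution of $(-\Delta)^s$, with $C_F$ chosen accordingly, $w$ solves $(-\Delta)^s w=g$, so that
\[
-(-\Delta)^s u=-(-\Delta)^s\phi+M\big(-(-\Delta)^s w\big)=-(-\Delta)^s\phi-Mg\le -(-\Delta)^s\phi .
\]
Thus everything reduces to choosing $\tau$ and $M$ so that $-(-\Delta)^s\phi(x)\le c_{n,s}\,M\,w(x)=c_{n,s}(u-\phi)(x)$ for all $x\in\mathbb{R}^n$, and to checking $w(x)\to0$ as $|x|\to\infty$.

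First I would fix $0<\tau<\min\{2s-1,\,n-2s\}$ (in particular $n>2s$, so $u_F$ is a genuinely decaying kernel) and record the basic properties of $w$. Finiteness, continuity and positivity of $u_F*g$ follow by splitting $\int|x-y|^{2s-n}g(y)\,dy$ over $\{|x-y|\le1\}$, where $|z|^{2s-n}$ is integrable since $n-2s<n$ and $g$ is bounded, and over $\{|x-y|\ge1\}$, where the tail converges because $g(y)\lesssim|y|^{-(2s+\tau)}$ and $(2s+\tau)+(n-2s)=n+\tau>n$. Since $g=\min\{1,|x|^{-(2s+\tau)}\}$ is bounded and Lipschitz, hence $\mathcal{C}^{0,\gamma}$ for every $\gamma<1$ with $2s+\gamma>2$ for $\gamma$ near $1$ (here $s>1/2$ is used), standard Riesz potential/Schauder estimates give $w\in\mathcal{C}^{2,\beta}_{loc}$ and $(-\Delta)^s w=g$ in the classical sense; this is the point at which $C_F$ is pinned down. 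Next I would establish the sharp two-sided decay $c_g\,(1+|x|)^{-\tau}\le w(x)\lesssim(1+|x|)^{-\tau}$: for the lower bound at large $|x|$, restrict the integral to $\{|y|\le|x|/2\}$, where $|x-y|\le\frac{3}{2}|x|$, to get $w(x)\ge c\,|x|^{2s-n}\int_{1\le|y|\le|x|/2}|y|^{-(2s+\tau)}\,dy\gtrsim|x|^{-\tau}$, the last step using $n-2s-\tau>0$; the upper bound (hence $w(x)\to0$) is the standard estimate for the convolution of the power kernels $|x|^{-(n-2s)}$ and $|x|^{-(2s+\tau)}$, both with exponents in $(0,n)$ summing to $n+\tau$. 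Since $w$ is also $\mathcal{C}^2$ at infinity, $(-\Delta)^s u$ is classically defined everywhere.

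Then I would use the hypotheses on $\phi$ to get $-(-\Delta)^s\phi(x)\le C_\phi\,(1+|x|)^{1-2s}$ for all $x$: for $|x|$ large this follows from $-(-\Delta)^s\phi=-(-\Delta)^s\Gamma-(-\Delta)^s\eta$, with $-(-\Delta)^s\Gamma\le c_2|x|^{1-2s}$ by homogeneity and $-(-\Delta)^s\eta=O(|x|^{-(2s+\epsilon)})$, the cone term dominating since $1-2s>-(2s+\epsilon)$; on compact sets $-(-\Delta)^s\phi$ is continuous and bounded. Since $\tau<2s-1$ forces $1-2s<-\tau<0$, we have $(1+|x|)^{1-2s}\le(1+|x|)^{-\tau}$, so combining the three estimates,
\[
-(-\Delta)^s u(x)\le-(-\Delta)^s\phi(x)\le C_\phi(1+|x|)^{1-2s}\le C_\phi(1+|x|)^{-\tau}\le\frac{C_\phi}{c_g}\,w(x).
\]
Choosing $M\ge C_\phi/(c_{n,s}\,c_g)$ gives $-(-\Delta)^s u\le c_{n,s}\,M\,w=c_{n,s}(u-\phi)$ in $\mathbb{R}^n$, and $(u-\phi)(x)=M\,w(x)\to0$ as $|x|\to\infty$ by the decay established above.

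The main obstacle is the second paragraph: pinning down the sharp rate $w\sim|x|^{-\tau}$ for the Riesz potential — the lower bound is what lets $w$ dominate $-(-\Delta)^s\phi\sim|x|^{1-2s}$ everywhere with a single constant $M$, and the upper bound is what delivers the condition at infinity — together with the bookkeeping of the normalizing constant so that $(-\Delta)^s(u_F*g)$ equals exactly $g$. The two constraints on $\tau$ are precisely what this step needs: $\tau<n-2s$ makes the convolution converge and decay, and $\tau<2s-1$ makes $w$ decay more slowly than $-(-\Delta)^s\phi$.
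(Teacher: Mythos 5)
Your proof is correct and follows essentially the same route as the paper: construct $w$ as the Riesz potential of the truncated power $g=\min\{1,|x|^{-(2s+\tau)}\}$, establish the two-sided decay $w\sim(1+|x|)^{-\tau}$, use $-(-\Delta)^s u\le-(-\Delta)^s\phi\lesssim(1+|x|)^{1-2s}$, and exploit $\tau<2s-1$ together with a large $M$ to close the inequality. The only cosmetic difference is how the decay of $w=u_F*g$ is obtained: the paper first records the known power-law asymptotics of $w_0=u_F*|\cdot|^{-(2s+\tau)}$ and then controls the correction $u_F*(g_0-g)$, which is supported by a compactly supported integrable datum and hence of lower order $|x|^{2s-n}$; you estimate $u_F*g$ directly (restricting to $\{|y|\le|x|/2\}$ for the lower bound, using the standard power-convolution bound for the upper). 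Both computations are valid and yield the same constants up to normalization, so this is the same proof in substance.
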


\begin{proof}
We construct an upper barrier of the form $u=\phi+w$ as a potential. We start the construction of $w$ with $w_0=u_F* g_0$ where $g_0(x)=|x|^{-(2s+\tau)}$ for some small $0<\tau<n-2s$.
Since both $n-2s<n$ and $2s+\tau<n$ while $(n-2s)+(2s+\tau)>n$,
 there are constants $a_0,a_1>0$ such that
\begin{equation}\label{decay.w0}
a_0|x|^{-\tau}\leq w_0(x)\leq a_1|x|^{-\tau}\quad\text{as}\quad |x|\to\infty.
\end{equation}
Also, by construction, 
\[
(-\Delta)^sw_0(x)=g_0(x)=|x|^{-(2s+\tau)}.
\]

Notice that $w_0$ decays at infinity (and therefore   $(u-\phi)=w_0\to0$ as $|x|\to\infty$) but it is not bounded at $0$. Consequently, we truncate $g_0$ and define $g_1=\min\{1,g_0\}$ and $w_1=u_F*g_1$.

The function  $w_1$ is  bounded, still radially decreasing, and has the same decay as $w_0$. To prove the last assertion, first notice that
\begin{equation}\label{77.553.1}
w_1=u_F*g_1=w_0-u_F*(g_0-g_1). 
\end{equation}
The function $g_0-g_1$ is supported in the ball of radius one,  therefore
\[
\big(u_F*(g_0-g_1)\big)(x)=C_F\int_{B_1(0)}\left(|y|^{-(2s+\tau)}-1\right)|x-y|^{2s-n}\,dy.
\]
For $|y|\leq1$ and  $|x|>2$ we have $|y|<|x|/2$ and from there we deduce
\[
\left(\frac{2}{3}\right)^{n-2s}|x|^{2s-n}\leq|x-y|^{2s-n}\leq2^{n-2s}|x|^{2s-n}.
\]
On the other hand, $\tau<n-2s$ implies that
\[
\int_{B_1(0)}\left(|y|^{-(2s+\tau)}-1\right)\,dy 
\]
is constant and therefore
\begin{equation}\label{decay.g0-g1}
b_0|x|^{2s-n}\leq \big(u_F*(g_1-g_0)\big)(x)\leq b_1|x|^{2s-n}\quad\text{as}\quad |x|\to\infty
\end{equation}
form some constants $b_0,b_1>0$.
Again, since $\tau<n-2s$ \eqref{decay.w0}, \eqref{77.553.1}, and \eqref{decay.g0-g1} prove that $w_1$ has the same decay as $w_0$ as claimed.

 Moreover, there exist constants $A_0,A_1>0$ such that
\[
A_0\min\{1,|x|^{-\tau}\}\leq w_1(x)\leq A_1\min\{1,|x|^{-\tau}\}.
\]
%
%
Also, by construction, we have
\[
(-\Delta)^sw_1(x)=g_1(x)=\min\{1,|x|^{-(2s+\tau)}\}.
\]

As a third  and final step, we dilate $w_1$ to our final $w$. To this aim, define $w(x)=M\cdot w_1(x)$ for  some large constant $M$ to be chosen. Then, 
\[
(-\Delta)^sw=M\cdot g_1(x)=M\, \min\{1,|x|^{-(2s+\tau)}\}
\] 
and 
\[
w(x)\geq M A_0\min\{1,|x|^{-\tau}\}.
\]

We are ready to check that for an appropriate $M$ the function $u=\phi+w$ satisfies
\[
-(-\Delta)^su\leq c_{n,s} (u-\phi).
\]
Indeed,
\begin{equation}\label{final.loop0}
c_{n,s}(u-\phi)=c_{n,s}w\geq c_{n,s}M A_0\min\{1,|x|^{-\tau}\}
\end{equation}
where $c_{n,s},A_0$ are given and M is to be chosen. On the other hand,
\begin{equation}\label{final.loop1}
\begin{split}
-(-\Delta)^su(x)&=-(-\Delta)^sw(x)-(-\Delta)^s\phi(x)\\
&=-M\min\{1,|x|^{-(2s+\tau)}\}-(-\Delta)^s\phi(x)\leq -(-\Delta)^s\phi(x).
\end{split}
\end{equation}
From our hypotheses on $\phi,$ 
\[
-(-\Delta)^s\phi(x)=-(-\Delta)^s\Gamma(x)-(-\Delta)^s\eta(x)\leq    C\big(|x|^{1-2s}+|x|^{-(2s+\epsilon)}\big)\leq   C\,|x|^{1-2s}
\]
for $|x|$ large, while
$-(-\Delta)^s\phi(x)$ is bounded in every neighborhood of the origin. Therefore
\begin{equation}\label{final.loop2}
-(-\Delta)^s\phi(x)\leq  C\cdot\min\{1,|x|^{1-2s}\}
\end{equation}
for some constant $C>0$.

In view of \eqref{final.loop0}, \eqref{final.loop1}, and \eqref{final.loop2}, we only have to control $C\cdot\min\{1,|x|^{1-2s}\}$ by  $c_{n,s}M A_0\min\{1,|x|^{-\tau}\}$ to conclude. If $\tau<2s-1$, a  large value of $M$ does it.
\end{proof}

\begin{proposition}(Existence of solutions)\label{the.existt.ttheorem}
There exists a unique solution of 
\begin{equation}\label{approximate.problem.lemma.existence}
 \left\{
\begin{split}
&\mathcal{D}_{s} u=u- \phi \qquad \text{in}
\quad
\mathbb{R}^n \\
&(u- \phi)(x)\to0\quad\!\text{as}\ |x|\to\infty. 
\end{split}
\right.
\end{equation}
\end{proposition}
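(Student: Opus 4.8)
The plan is to construct the solution by Perron's method, feeding it the comparison principle of Theorem~\ref{them.comparison} together with the explicit sub- and supersolution already at hand. A supersolution is $\bar u:=\phi+M\,(u_F*g)$ from Lemma~\ref{lemma.supersol.basic}: it is regular enough (it is $\mathcal{C}^{2,\alpha}$ up to the smoothing provided by the Riesz potential, so $\mathcal D_s\bar u$ can be evaluated classically), and since $-c_{n,s}^{-1}(-\Delta)^s$ lies in the family over which the infimum defining $\mathcal D_s$ is taken,
\[
\mathcal D_s\bar u(x)\ \le\ -c_{n,s}^{-1}(-\Delta)^s\bar u(x)\ \le\ \bar u(x)-\phi(x),
\]
while $\bar u-\phi=M\,(u_F*g)\to0$ as $|x|\to\infty$. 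A subsolution is $\underline u:=\phi$, the prescribed lower barrier: by convexity of $\phi$ one has $\delta(\phi,x,y)\ge0$, hence $L_A\phi(x)\ge0$ for every admissible $A$, so $\mathcal D_s\phi(x)\ge0=\phi(x)-\phi(x)$; moreover $\underline u-\phi\equiv0$ and $\underline u\le\bar u$ because $u_F*g\ge0$.

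Next I would run the Ishii--Perron construction on the family $\mathcal F$ of all viscosity subsolutions $w$ of $\mathcal D_sw=w-\phi$ with $\underline u\le w\le\bar u$ (nonempty, since $\underline u\in\mathcal F$). The standard nonlocal Perron argument (see \cite{Caffarelli.Silvestre} and the references therein) shows that the upper semicontinuous envelope $u^{*}$ of $\sup_{w\in\mathcal F}w$ is a subsolution and its lower semicontinuous envelope $u_{*}$ is a supersolution; the only point requiring care in the integro-differential setting is that modifying a subsolution inside a small ball by a smooth bump keeps it a subsolution, which holds because the tail contribution depends continuously on the bump and all the kernels are comparable to $|y|^{-(n+2s)}$ on bounded sets. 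Since $\phi=\underline u\le u_{*}\le u^{*}\le\bar u$, both $u^{*}$ and $u_{*}$ satisfy $(\,\cdot\,-\phi)(x)\to0$ at infinity, so Theorem~\ref{them.comparison} forces $u^{*}\le u_{*}$; together with $u_{*}\le u^{*}$ this gives that $u:=u^{*}=u_{*}$ is a continuous viscosity solution of \eqref{approximate.problem.lemma.existence} squeezed between $\phi$ and $\bar u$. In particular $u-\phi\to0$ at infinity and $u$ inherits the asymptotically linear (conical) growth of $\phi$. Uniqueness is then immediate: two solutions $u_1,u_2$ are, by Theorem~\ref{them.comparison}, simultaneously $\le$ and $\ge$ each other.

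Although the statement asserts only existence and uniqueness, the same scheme shows $u$ is classical. By Propositions~\ref{lemma2} and \ref{lemma3} (applied with $g(x,t)=t-\phi(x)$, so $\mu=1$ and $\textnormal{Lip}(g)=\textnormal{Lip}(\phi)$), $u$ is Lipschitz and semiconcave with constants controlled by $\phi$. A strong maximum principle shows $u>\phi$ in $\mathbb R^n$: if $(u-\phi)(x_0)=0$ then $\phi$ touches $u$ from below at $x_0$ and, by semiconcavity, a paraboloid touches it from above, so Lemma~\ref{viscosidad.clasico} makes $\mathcal D_su(x_0)$ classical; but $u-\phi\ge0$ with a zero at $x_0$ yields $L_Au(x_0)=L_A\phi(x_0)+L_A(u-\phi)(x_0)\ge L_A\phi(x_0)$, hence $\mathcal D_su(x_0)\ge\mathcal D_s\phi(x_0)>0$ by the strict convexity of $\phi$ near $x_0$, contradicting $\mathcal D_su(x_0)\le u(x_0)-\phi(x_0)=0$. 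Consequently, on every compact $K$ one has $(1-s)\mathcal D_su=(1-s)(u-\phi)\ge\eta_0(K)>0$, so Theorem~\ref{main.result.nondegeneracy.monge.ampere} applies and $\mathcal D_su=\mathcal D_s^\theta u$ on $K$ for small $\theta=\theta(K)$; the regularity theory for uniformly elliptic nonlocal operators (the nonlocal Evans--Krylov theorem, \cite{Caffarelli.Silvestre,Caffarelli.Silvestre2}) then upgrades $u$ to a classical solution.

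The main obstacle is the Perron step for the \emph{degenerate} class $\mathcal D_s$: verifying that the sup/inf envelopes are sub/supersolutions despite the lack of uniform ellipticity, and arranging the decay at infinity precisely so that the comparison principle of Theorem~\ref{them.comparison} can be invoked to identify them. An alternative that sidesteps this is to solve instead the uniformly elliptic problems $\mathcal D_s^\theta u^\theta=u^\theta-\phi$ (existence and regularity from \cite{Caffarelli.Silvestre}), use the $\theta$-independent Lipschitz and semiconcavity bounds of Propositions~\ref{lemma2}--\ref{lemma3} to extract a locally uniform limit as $\theta\to0$, and identify the limit as a solution of the degenerate equation via Theorem~\ref{main.result.nondegeneracy.monge.ampere}; there the obstacle becomes the stability of the equation in that limit.
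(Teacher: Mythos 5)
Your primary route and the paper's diverge at the very step you yourself flag as ``the main obstacle'': you construct the candidate solution by Perron's method directly for the degenerate operator $\mathcal D_s$, but you do not actually establish that the sup/inf envelopes of the subsolution family are respectively a sub- and a supersolution. The standard bump argument you allude to is only routine when the kernel class is uniformly elliptic; for $\mathcal D_s$ the competing matrices can have $\lambda_{\min}(A)\to0$, so the kernels are \emph{not} comparable to $|y|^{-(n+2s)}$ with uniform constants, and the usual proof that a strict local bump preserves the subsolution property does not go through unmodified. You acknowledge this and then leave it unresolved, so as written the existence step of your main argument is a genuine gap, not a proof.

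Your ``alternative'' sketch --- solve $\mathcal D_s^\theta u^\theta=u^\theta-\phi$ and let $\theta\to0$ --- is in fact the paper's route, but it has a second gap: you cite \cite{Caffarelli.Silvestre} for existence of $u^\theta$ on all of $\mathbb R^n$ with data at infinity, whereas that theory only directly supplies solvability of the Dirichlet problem on a bounded domain. The paper closes this by a further exhaustion argument: it solves the uniformly elliptic Dirichlet problem on $B_k(0)$ with exterior data $\phi$, observes (by comparison and the fact that $\phi,\bar u$ remain sub/supersolutions) that $u_k$ is monotone increasing and trapped between $\phi$ and $\bar u$, and passes to the limit $k\to\infty$ to get $u^\theta$. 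Then it uses the nesting $\mathcal D_s\le\mathcal D_s^{\theta_1}\le\mathcal D_s^{\theta_2}$ for $\theta_1\le\theta_2$ so that the $u^\theta$ are monotone decreasing supersolutions of the degenerate problem, combines with the $\theta$-uniform Lipschitz bound from Section 5, and identifies the monotone uniform limit as the solution. The stability step you worry about at the end is thus handled by \emph{monotonicity} of the approximation in $\theta$ and the $\theta$-independent Lipschitz estimate, not by an abstract stability theorem.

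A secondary point: your in-line argument for $u>\phi$ asserts $\mathcal D_s\phi(x_0)>0$ ``by strict convexity near $x_0$''. That is not immediate. $\phi$ is a cone (non-strictly-convex) at infinity, and $\mathcal D_s\phi(x_0)$ is an infimum over an unbounded, degenerate family of matrices; one needs a uniform positive lower bound on $-(-\Delta)^s_e\phi(x_0)$ over all directions $e$, which the paper obtains by a compactness/dominated-convergence argument on directions in a separate proposition. This is outside the statement you were asked to prove, but worth noting that your shortcut elides that technicality.
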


\begin{proof}
First, observe that $\phi$ is a subsolution to the problem, since by convexity $\delta(
\phi,x,y)\geq0$, and therefore $\mathcal{D}_{s} \phi(x)\geq0$.

To find a supersolution  notice that $-c_{n,s}^{-1}\,(- \Delta )^{s}$ is one of the operators that compete for the infimum in the definition of $\mathcal{D}_{s}$, and we know 
from Lemma \ref{lemma.supersol.basic} that there is a function $\bar u$ such that $-c_{n,s}^{-1}\,(- \Delta )^{s}\bar u\leq \bar u- \phi$ with the right ``boundary data at infinity", that is, $(\bar u-\phi)\to0$ as 
$|x|\to\infty$. We have,
\[
\mathcal{D}_{s} \bar u(x)\leq
-c_{n,s}^{-1}\,(- \Delta )^{s}\bar u(x)\leq \bar u- \phi.
\]
By comparison, see Theorem \ref{them.comparison}, $\phi\leq \bar u$.


Consider the following approximating problem, 
\[
 \left\{
\begin{split}
&\mathcal{D}_{s}^\epsilon u=u- \phi \qquad \text{in}
\quad
\mathbb{R}^n \\
&(u- \phi)(x)\to0\quad\!\text{as}\ |x|\to\infty,
\end{split}
\right.
\]
where $\mathcal{D}_{s}^\epsilon$ is  the following non-degenerate operator, 
\[
\begin{split}
\mathcal{D}_s^\epsilon u(x)&=\inf\bigg\{
\text{P.V.}\int_{\mathbb{R}^n}\frac{u(y)-u(x)}{|A^{-1}(y-x)|^{n+2s}}\,
\, dy\ \bigg|\ A>0,\ \det A=1,\  \epsilon\,Id<A<\epsilon^{-1}\,Id\bigg\}\\
&=\inf\bigg\{\frac{1}{2}
\int_{\mathbb{R}^n}\frac{u(x+y)+u(x-y)-2u(x)}{|A^{-1}y|^{n+2s}}\, dy\ \bigg|\ A>0,\ \det A=1,\  \  \epsilon\,Id<A<\epsilon^{-1}\,Id\bigg\}.
\end{split}
\]
Notice that  $\phi$ and $\bar u$ as above are respectively a sub- and supersolution for these approximating problems for every $\epsilon$.

Being uniformly elliptic, the approximating problems have a solution for every $\epsilon$. To show this, consider for every $k=1,2,\ldots$ the following uniformly elliptic Dirichlet problem
\[
 \left\{
\begin{split}
&\mathcal{D}_{s}^\epsilon u=u- \phi \qquad \text{in}
\quad B_k(0) \\
&u= \phi\hspace{55pt} \text{in}\ \mathbb{R}^n\setminus{}B_k(0).
\end{split}
\right.
\]
Then, for every $k$ there exists a unique solution $u_k$, which is regular (depending on $\epsilon$ but not in $k$), see  \cite{Caffarelli.Silvestre, Caffarelli.Silvestre2} and the references therein. By comparison (see \cite{Caffarelli.Silvestre}) we have
\[
\phi\leq u_{k_1}\leq u_{k_2}\leq\bar u\qquad\textrm{in}\ B_{k_1}(0)
\]
for every $k_1\leq k_2$ (notice that  $\phi$ and $\bar u$ are always a sub- and supersolution), that is, the sequence $u_k$ is monotone increasing. We conclude that the sequence converges locally uniformly to the solution of the approximating problem.

Now, observe that $\mathcal{D}_s u(x)\leq \mathcal{D}_s^{\epsilon_1} u(x)\leq \mathcal{D}_s^{\epsilon_2} u(x)$ for any $\epsilon_1\leq\epsilon_2$ since the infimum is smaller the larger the class of matrices. In particular,  we have that every $u_\epsilon$ is a supersolution of \eqref{approximate.problem.lemma.existence} and a supersolution of the approximating problem with every smaller $\epsilon$. Picking $\epsilon_k=1/k$ for $k=1,2,\ldots$ we have by comparison (Theorem \ref{them.comparison}) that
\[
\phi\leq \ldots \leq u_{\epsilon_k}\leq\ldots\leq u_{\epsilon_2}\leq u_{\epsilon_1} \leq\bar{u}.
\]

The arguments in Section \ref{sect3.exist.lip} imply that every $u_\epsilon$ is Lipschitz continuous with the same constant as $\phi$, hence uniformly in $\epsilon$.
Therefore, when $\epsilon$ goes to 0, the sequence $u_\epsilon$ converges monotonically and uniformly to the solution of problem \eqref{approximate.problem.lemma.existence}, which is unique by comparison.
\end{proof}

To conclude, we show that the right-hand side of problem \eqref{approximate.problem.lemma.existence} is positive and  Theorem \ref{main.result.nondegeneracy.monge.ampere} applies. Therefore, the operator remains uniformly elliptic and standard regularity results for uniformly elliptic equations, see \cite{Caffarelli.Silvestre, Caffarelli.Silvestre2} are available.

\begin{proposition}
 Let $u$ be the solution to problem \eqref{approximate.problem.lemma.existence}. Then, $u>\phi$ in $\mathbb{R}^n$.
\end{proposition}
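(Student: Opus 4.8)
The plan is to run a strong maximum principle argument for the nonlocal equation, using that $\phi$ is a strict subsolution in the interior. First I would note that $u\ge\phi$ everywhere, which is already established by comparison in Proposition \ref{the.existt.ttheorem}, so it remains to rule out interior contact points. Suppose, for contradiction, that $u(x_0)=\phi(x_0)$ for some $x_0\in\mathbb{R}^n$. Since $u\ge\phi$ and $\phi\in\mathcal{C}^{2,\alpha}$, the function $\phi$ touches $u$ from below at $x_0$, so by Lemma \ref{viscosidad.clasico} the operator $\mathcal{D}_s u(x_0)$ is defined classically and $\mathcal{D}_s u(x_0)\le u(x_0)-\phi(x_0)=0$ (using that $u$ is a subsolution and hence, being a solution, in particular $\mathcal{D}_s u\le u-\phi$ in the viscosity sense, which Lemma \ref{viscosidad.clasico} upgrades to the classical inequality at a touching point).

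Next I would exploit the structure of $\mathcal{D}_s$ as an infimum. Fix $\delta>0$ and pick $A_\delta>0$ with $\det A_\delta=1$ such that $L_{A_\delta}u(x_0)\le\mathcal{D}_s u(x_0)+\delta\le\delta$. On the other hand, since $\phi$ is convex in compact sets and strictly convex near $x_0$, we have $\delta(\phi,x_0,y)\ge 0$ for all $y$ and $\delta(\phi,x_0,y)>0$ for $y$ in a neighborhood of the origin (away from $0$); moreover $u-\phi\ge 0$ with $(u-\phi)(x_0)=0$ gives $\delta(u-\phi,x_0,y)=\delta(u,x_0,y)-\delta(\phi,x_0,y)\ge -2(u-\phi)(x_0)=0$... more precisely $\delta(u,x_0,y)\ge\delta(\phi,x_0,y)$ because $u(x_0\pm y)\ge\phi(x_0\pm y)$ and $u(x_0)=\phi(x_0)$. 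Therefore
\[
L_{A_\delta}u(x_0)=\frac12\int_{\mathbb{R}^n}\frac{\delta(u,x_0,y)}{|A_\delta^{-1}y|^{n+2s}}\,dy\ge\frac12\int_{\mathbb{R}^n}\frac{\delta(\phi,x_0,y)}{|A_\delta^{-1}y|^{n+2s}}\,dy\ge c>0,
\]
where the positive lower bound $c$ comes from the strict convexity of $\phi$ near $x_0$: the integrand is nonnegative everywhere and strictly positive on a set of positive measure, and one needs a bound uniform in $A_\delta$. Letting $\delta\to0$ yields $0\ge c>0$, a contradiction, so $u>\phi$ everywhere.

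The main obstacle is making the lower bound $c$ \emph{uniform in the competing matrix} $A_\delta$: a priori a very degenerate $A_\delta$ could concentrate the kernel $|A_\delta^{-1}y|^{-(n+2s)}$ in directions where $\delta(\phi,x_0,y)$ is smallest, so one cannot simply bound $L_{A_\delta}\phi(x_0)$ from below by a fixed constant. The fix is precisely the local uniform ellipticity already proven: since the right-hand side $u-\phi$ is bounded and $u$ is Lipschitz and semiconcave (Propositions \ref{lemma2} and \ref{lemma3}), once we know $u>\phi$ on any compact set we could invoke Theorem \ref{main.result.nondegeneracy.monge.ampere} to restrict to $\lambda_{\min}(A)\ge\theta>0$; but to avoid circularity here one argues directly that, by strict convexity of $\phi$ on a ball $B_r(x_0)$, $\delta(\phi,x_0,y)\ge\kappa|y|^2$ for $|y|\le r$ with $\kappa>0$, and then
\[
\frac12\int_{B_r(0)}\frac{\delta(\phi,x_0,y)}{|A^{-1}y|^{n+2s}}\,dy\ge\frac{\kappa}{2}\int_{B_r(0)}\frac{|y|^2}{|A^{-1}y|^{n+2s}}\,dy\ge\frac{\kappa}{2}\,c(n,s)\,r^{2s-1}\lambda_{\min}(A)^{-2s/(n-1)}\ge\frac{\kappa}{2}\,c(n,s)\,r^{2s-1}
\]
using $\det A=1$ and the same homogeneity computation as in the proof of Proposition \ref{second.part.main}. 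This gives the required uniform constant $c=\tfrac{\kappa}{2}c(n,s)r^{2s-1}$, independent of $A$, and closes the argument.
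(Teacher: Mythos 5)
Your proof is correct and takes a genuinely different --- and arguably more direct --- route than the paper's. The paper also argues by contradiction, but it replaces $\phi$ by an auxiliary $\tilde\phi\in\mathcal{C}^{2,\alpha}$ touching $u$ strictly from below at $x_0$ as the only contact point, deduces $\mathcal{D}_s\tilde\phi(x_0)=0$, and then runs a compactness argument on the sphere (choosing directions $e_k$ with arbitrarily small one-dimensional fractional Laplacian and passing to the limit by dominated convergence) to produce a direction $e$ with $-(-\Delta)^s_e\tilde\phi(x_0)\le 0$, contradicting strict convexity. You instead use the elementary pointwise inequality $\delta(u,x_0,y)\ge\delta(\phi,x_0,y)$ (immediate from $u\ge\phi$ with equality at $x_0$) to get $L_{A}u(x_0)\ge L_{A}\phi(x_0)$ for every competing $A$, and then bound $L_{A}\phi(x_0)$ from below \emph{uniformly in $A$} by splitting at a fixed radius $r$: in $B_r(0)$, strict convexity of $\phi$ gives $\delta(\phi,x_0,y)\geq\kappa|y|^2$ and the same sphere estimate underlying Proposition~\ref{second.part.main} (namely Proposition~\ref{estimate.kernel.on.sphere}) yields a lower bound $\gtrsim\lambda_{\min}(A)^{-2s/(n-1)}\geq 1$, while outside $B_r(0)$ the integrand is nonnegative by convexity of $\phi$. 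This sidesteps both the replacement of $\phi$ by $\tilde\phi$ and the compactness step, and makes the uniform non-degeneracy of $L_A\phi$ at the contact point explicit rather than obtained by contradiction. Two small slips that do not affect the argument: you write ``subsolution'' where you mean ``supersolution'' when justifying $\mathcal{D}_su(x_0)\leq 0$, and the radial integration $\int_0^r\rho^{1-2s}\,d\rho$ produces $r^{2-2s}/(2-2s)$, not $r^{2s-1}$; in both cases only the existence of a positive constant independent of $A$ is needed, which you have.
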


\begin{proof}
As pointed out in the proof of Proposition \ref{the.existt.ttheorem} we have  $u\geq\phi$ in $\mathbb{R}^n$ by comparison, therefore we need to prove that the inequality is strict. 
We argue by contradiction and assume to the contrary that there is $x_0\in\mathbb{R}^n$ such that $u(x_0)=\phi(x_0)$.

Observe that then $\phi$ touches $u$ from below at $x_0$. We can replace $\phi$ by $\tilde\phi\in\mathcal{C}^{2,\alpha}(\mathbb{R}^n)$, also strictly convex in compact sets and asymptotic to some cone at infinity in such a way that $\tilde\phi$ touches $\phi$ (and also $u$) from below at $x_0$, and this is the only contact point.

Then we can use $\tilde\phi$ as a test in the definition of viscosity solution of problem \eqref{approximate.problem.lemma.existence} to get $\mathcal{D}_{s} \tilde\phi(x_0)\leq0$. On the other hand, $\mathcal{D}_{s} \tilde\phi(x_0)\geq0$  by convexity, and we conclude that $\mathcal{D}_{s} \tilde\phi(x_0)=0$.

Then we claim that there exist a direction $e\in\partial B_1(0)$ such that the one-dimensional fractional laplacian of the restriction of $\tilde\phi$ to the direction $e$ is non-positive, namely
$-\big(- \Delta \big)_{e}^s\tilde\phi(x_0)\leq0$.
This is a contradiction with the fact that $-\big(- \Delta \big)_{e}^s\tilde\phi(x_0)>0$ since $\tilde\phi$ is convex and non-constant.

To prove the claim, notice that for all $k=1,2,\ldots$ there exists $e_k\in\partial B_1(0)$ such that 
\begin{equation}\label{one.eq.i.need.inremark.positiv}
 -\big(- \Delta \big)_{e_k}^s\tilde\phi(x_0)=\frac12
\int_{\mathbb{R}} \frac{\delta(\tilde\phi,x_0,te_k)}{|t|^{1+2s}}\,dt\leq\frac{1}{k}
\end{equation}
(otherwise, there exists $\mu>0$ such that $-\big(- \Delta \big)_{e}^s\tilde\phi(x_0)>\mu$ uniformly in $e$ and we can argue as in Proposition \ref{second.part.main} to show that $\mathcal{D}_{s} \tilde\phi(x_0)>0$).

By the convexity and linear growth at infinity of $\tilde\phi$ we have that 
\[
0\leq \frac{\delta(\tilde\phi,x_0,te_k)}{|t|^{1+2s}}\leq \frac{\min\{2\,\textrm{Lip}(\tilde\phi)|t|,C|t|^2\}}{|t|^{1+2s}}\in L^1(\mathbb{R})
\]
independently of $k$. Passing to a subsequence if necessary, we can assume that $e_k\to e$ as $k\to\infty$ and then we can pass to the limit in \eqref{one.eq.i.need.inremark.positiv}
by the dominated convergence theorem. This concludes the proof of the claim.
\end{proof}

\begin{remark}
Another approach to show existence would be to solve a ``truncated problem"  with the
restriction $\epsilon Id < A <\epsilon^{-1} Id$ besides the condition $\det(A)=1$. For this problem solutions exist and are smooth (depending on $\epsilon$) from existing
theory (see \cite{Caffarelli.Silvestre,Caffarelli.Silvestre2}), but also semiconvex independently of $\epsilon$ (Section \ref{sect3.exist.lip})  and the proof that
the operator remains strictly non-degenerate (Section \ref{sect4.unif.elipticity}) applies directly to these
solutions for $\epsilon$ small enough. 
\end{remark}

\medskip

\appendix

\section{}

In this appendix we include for the reader's convenience the proof of the following fact, mentioned in the introduction.
\begin{lemma}\label{caract.determ}
 If $u$ is convex,  asymptotically linear and $1/2<s<1$, then
\[
\lim_{s\to1}\big((1-s)\,\mathcal{D}_s u(x)\big)=\frac{ \omega_{n}}{4}\cdot\det(D^2u(x))^{1/n}
\]
in the viscosity sense.
\end{lemma}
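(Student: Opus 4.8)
The plan is to prove the statement first for a paraboloid (or $C^2$) test function and then recover the general ``viscosity sense'' assertion through the standard test-function argument resting on Lemma \ref{viscosidad.clasico}. The two engines are: the factor $1-s$ \emph{localizes} the integral defining $L_A$, so that $\lim_{s\to1}(1-s)L_A\psi(x)$ only feels the second-order jet of $\psi$ at $x$; and the infimum in $\mathcal D_s$ degenerates, as $s\to1$, to the linear-algebra problem $\inf\{\mathrm{trace}(AA^tM):A>0,\ \det A=1\}=n(\det M)^{1/n}$ for $M\ge0$.

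For the localization I would write, after the substitution $y=Az$ (legitimate since $\det A=1$), $(1-s)L_A\psi(x)=\tfrac{1-s}{2}\int_{\mathbb R^n}\delta(\psi,x,Az)\,|z|^{-n-2s}\,dz$ and split into $|z|<\delta_0$ and $|z|>\delta_0$. On the outer region $s>1/2$ together with the linear bound $\delta(\psi,x,Az)=O(|z|)$ (valid here because a convex asymptotically linear function is globally Lipschitz) makes the integral uniformly bounded, so it vanishes after multiplying by $1-s$; on the inner region the Taylor expansion $\delta(\psi,x,Az)=\langle A^tD^2\psi(x)A\,z,z\rangle+o(|z|^2)$, combined with $\int_{\partial B_1}e_ie_j\,d\mathcal H^{n-1}=\tfrac{\omega_n}{n}\delta_{ij}$ and $\int_0^{\delta_0}r^{1-2s}\,dr=\tfrac{\delta_0^{2-2s}}{2-2s}$, yields the limit $\tfrac{\omega_n}{4n}\,\mathrm{trace}(AA^tD^2\psi(x))$ as $s\to1$ and $\delta_0\to0$. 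I would record that these estimates are \emph{uniform} over any set of matrices with $\det A=1$, $\lambda_{\min}(A)\ge\theta$ (which forces $\lambda_{\max}(A)\le\theta^{1-n}$, hence compactness). For the optimization, with $B=AA^t$ running over all symmetric $B>0$ with $\det B=1$, Hadamard's inequality $\prod_iB_{ii}\ge\det B$ followed by AM--GM on the diagonal of $P^tBP$ ($P$ diagonalizing $M$) gives $\mathrm{trace}(BM)\ge n(\det M)^{1/n}$, with equality (when $M>0$) at a matrix $A^\ast$ whose eigenvalues are bounded away from $0$, namely $\lambda_{\min}(A^\ast)^2=(\det M)^{1/n}/\lambda_{\max}(M)$.

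Combining, I would get $\lim_{s\to1}(1-s)\mathcal D_s\psi(x)=\tfrac{\omega_n}{4}\det(D^2\psi(x))^{1/n}$ by interchanging $\inf_A$ with $\lim_{s\to1}$: the bound $\le$ is immediate by testing with $A^\ast$; for $\ge$ I split the infimum into $\{\lambda_{\min}(A)\ge\theta\}$, where the uniform limit of the previous paragraph applies and --- taking $\theta<\lambda_{\min}(A^\ast)$ --- the restricted infimum is already $\tfrac{\omega_n}{4}\det(D^2\psi(x))^{1/n}$, and $\{\lambda_{\min}(A)<\theta\}$, where, if $D^2\psi(x)>0$, the non-degeneracy mechanism of Section \ref{sect4.unif.elipticity} (or, directly, restricting $L_A\psi(x)$ to $\{|Az|<\rho_0\}$, using $\delta(\psi,x,Az)\ge\lambda_0|Az|^2$ there, and substituting $w=Az$, which gives $(1-s)L_A\psi(x)\gtrsim\lambda_{\max}(A)^{n+2s}\gtrsim\theta^{-(n+2s)/(n-1)}$ for $s$ near $1$) shows these matrices are beaten by $A^\ast$ once $\theta$ is small, uniformly in $s$. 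Finally, for general convex asymptotically linear $u$ and $\psi\in C^2$ touching $u$ from above (resp.\ below) at $x$, I would set $v_r=\psi$ on $B_r(x)$ and $=u$ outside; then $\delta(v_r,x,\cdot)\ge\delta(u,x,\cdot)$ (resp.\ $\le$) with equality near $x$, so Lemma \ref{viscosidad.clasico} gives $\mathcal D_sv_r(x)\ge\mathcal D_su(x)$ (resp.\ $\le$), while the previous steps applied to $v_r$ (its tail again killed by $1-s$, with $D^2\psi(x)\ge0$ automatic in the ``from above'' case) compute $\lim_{s\to1}(1-s)\mathcal D_sv_r(x)=\tfrac{\omega_n}{4}\det(D^2\psi(x))^{1/n}$; this yields the two one-sided viscosity inequalities.

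The main obstacle is the interchange of $\inf_A$ and $\lim_{s\to1}$: ruling out the very degenerate matrices in the limit is genuinely nontrivial and is exactly where the local uniform ellipticity of Section \ref{sect4.unif.elipticity} (in a form uniform in $s$ near $1$) is indispensable. The one situation that this machinery does not cover, $\det D^2\psi(x)=0$, must be handled separately and for free: the limit is $0$; the bound $\le 0$ follows by testing with matrices whose large eigenvalue matches the flat eigenvalue of $D^2\psi(x)$, and the bound $\ge 0$ is just $\delta(v_r,x,\cdot)\ge-o(1)$ from the convexity of $u$.
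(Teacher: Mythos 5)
Your proof is correct in its essentials and reaches the right conclusion, but the route through the key difficulty differs from the paper's. The paper factors the argument into Lemma~\ref{caract.determ.2} (convergence of $(1-s)\mathcal{D}_s$ to the second-order ``$\inf\mathrm{trace}$'' operator) and Lemma~\ref{caract.determ.3} (the linear-algebra identity $\inf_{\det A=1}\mathrm{trace}(AA^tB)=n\det(B)^{1/n}$); for the lower bound in Lemma~\ref{caract.determ.2} it simply takes, for each $s$ and $\epsilon$, a near-minimizing matrix $A_\epsilon$ and plugs it into the four-piece decomposition \eqref{bunch.of.integrals}, letting $s\to1$ and then $\epsilon\to0$ --- without any explicit control of $\lambda_{\max}(A_\epsilon)$ as $s\to1$, even though that quantity appears in the tail estimate \eqref{integral.fourth.term}. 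You, by contrast, put your finger on exactly this point: you localize uniformly over the compact set $\{\det A=1,\ \lambda_{\min}(A)\ge\theta\}$, let $s\to1$ there, and then dispose of $\{\lambda_{\min}(A)<\theta\}$ by importing the non-degeneracy mechanism of Section~\ref{sect4.unif.elipticity} (or your direct ellipsoid estimate). This is a genuinely different handle on the interchange of $\inf_A$ and $\lim_{s\to1}$, and in my view the more robust one: the paper treats the interchange as automatic, whereas you correctly diagnose it as the actual crux and supply a uniform (in $s$) barrier against degenerate matrices. What the paper buys by its route is brevity; what you buy is an explicit mechanism that closes the interchange without silently assuming compactness of minimizing sequences of matrices.

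Two small quantitative slips, neither fatal. First, in your direct lower bound for degenerate $A$ the exponent is wrong: substituting $w=Az$ with $\delta(\psi,x,w)\ge\lambda_0|w|^2$ on $|w|<\rho_0$ and passing to polar coordinates gives, as in Proposition~\ref{estimate.kernel.on.sphere}, that the angular factor is $\int_{\partial B_1}|A^{-1}v|^{-(n+2s)}\,d\mathcal H^{n-1}(v)\gtrsim\sum_j\lambda_j(A)^{2s}$, so $(1-s)L_A\psi(x)\gtrsim\lambda_{\max}(A)^{2s}\gtrsim\theta^{-2s/(n-1)}$, not $\lambda_{\max}(A)^{n+2s}$; only a single ray contributes at the rate you wrote, and the cone average is weaker. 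The conclusion (degenerate matrices are beaten once $\theta$ is small, uniformly in $s$ near $1$) survives, but with the milder exponent $2s/(n-1)$. Second, in the ``touches from below'' viscosity branch $D^2\psi(x)$ need not be positive semidefinite; when it has a negative eigenvalue your localization shows $\lim(1-s)\mathcal D_s v_r(x)=-\infty$ (test with a fixed $A$ whose large eigenvector aligns with the negative direction), which is fine because only the ``$\le$'' inequality is needed there, but your phrasing that the ``previous steps compute $\lim(1-s)\mathcal D_s v_r(x)=\tfrac{\omega_n}{4}\det(D^2\psi(x))^{1/n}$'' should be restricted to the case $D^2\psi(x)\ge0$, the indefinite case being vacuous.
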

The proof of Lemma \ref{caract.determ} is a direct consequence of the following two results.
\begin{lemma}\label{caract.determ.2}
 If $u$ is asymptotically linear and $1/2<s<1$, then
\[
\lim_{s\to1}\big((1-s)\,\mathcal{D}_s u(x)\big)=
\frac{ \omega_{n}}{4n}\cdot
\inf\Big\{
 {\rm trace}\big(AA^tD^2u(x)\big)
 \big|\ A>0,\ \det A=1\Big\}
\]
in the viscosity sense.
\end{lemma}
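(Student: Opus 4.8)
**Proof proposal for Lemma A.3 (the statement that $\lim_{s\to1}(1-s)\mathcal{D}_s u(x)=\frac{\omega_n}{4n}\inf\{\operatorname{trace}(AA^tD^2u(x))\mid A>0,\ \det A=1\}$).**

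My understanding: I need to prove $\lim_{s\to 1}(1-s)\mathcal{D}_s u(x) = \frac{\omega_n}{4n}\inf\{\operatorname{trace}(AA^t D^2u(x)) : A>0, \det A = 1\}$ for $u$ asymptotically linear (viscosity sense), where $\mathcal{D}_s u(x) = \inf\{\frac12\int \delta(u,x,y)/|A^{-1}y|^{n+2s}\,dy : A>0, \det A=1\}$.

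**The core computation — a single-operator limit.** The fundamental fact I would establish first: for a fixed $A>0$ with $\det A=1$, and $u$ that is $C^2$ near $x$ with asymptotically linear growth,
$$\lim_{s\to1}(1-s)\,L_A u(x) = \lim_{s\to1}\frac{1-s}{2}\int_{\mathbb{R}^n}\frac{\delta(u,x,y)}{|A^{-1}y|^{n+2s}}\,dy = \frac{\omega_n}{4n}\operatorname{trace}(AA^tD^2u(x)).$$
To see this, substitute $z=A^{-1}y$ (Jacobian $1$ since $\det A=1$), writing $\delta(u,x,Az)=\langle D^2u(x)Az,Az\rangle+o(|z|^2)$ for small $z$ and using linear growth to control the tail. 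The singular integral $(1-s)\int_{|z|<1}\langle D^2u(x)Az,Az\rangle|z|^{-(n+2s)}\,dz$ is computed in polar coordinates: the radial part $\int_0^1 r^{1-2s}\,dr = \frac{1}{2-2s}$ contributes the factor that cancels $(1-s)$, and the angular part $\int_{\partial B_1}\langle D^2u(x)A\omega,A\omega\rangle\,d\mathcal{H}^{n-1}(\omega) = \frac{\omega_n}{n}\operatorname{trace}(A^tD^2u(x)A) = \frac{\omega_n}{n}\operatorname{trace}(AA^tD^2u(x))$ by the standard identity $\int_{\partial B_1}\omega_i\omega_j\,d\mathcal{H}^{n-1} = \frac{\omega_n}{n}\delta_{ij}$. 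The tail $|z|\geq 1$ carries an extra factor $(1-s)\to0$ and vanishes in the limit. This gives the claimed value with the factor $\frac12\cdot\frac{1}{2}\cdot\frac{\omega_n}{n}=\frac{\omega_n}{4n}$.

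**From one operator to the infimum.** The inequality ``$\limsup \leq$'' is easy: for the matrix $A_0$ realizing (or nearly realizing) the finite-dimensional infimum of $\operatorname{trace}(AA^tD^2u(x))$, apply the single-operator limit to get $\limsup_{s\to1}(1-s)\mathcal{D}_su(x)\leq\limsup_{s\to1}(1-s)L_{A_0}u(x)=\frac{\omega_n}{4n}\operatorname{trace}(A_0A_0^tD^2u(x))$. For ``$\liminf\geq$'', the key point is a \emph{uniform} lower bound: I must show that matrices $A$ with $\lambda_{\min}(A)$ very small cannot beat the infimum even in the $s<1$ regime — this is exactly the phenomenon quantified by Theorem~\ref{main.result.nondegeneracy.monge.ampere} and Proposition~\ref{second.part.main}. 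Concretely, using $\delta(u,x,y)\geq \langle D^2u(x)y,y\rangle - C|y|^2\cdot\mathbf{1}_{|y|>\rho} - (\text{tail correction})$ for $u$ touched below by a paraboloid, one reduces to controlling $(1-s)L_A$ from below by $\frac{\omega_n}{4n}\operatorname{trace}(AA^tD^2u(x)) - o_s(1)$ with the error uniform over the relevant compact set of non-degenerate $A$; combined with the a priori bound that degenerate $A$ produce large operators (Proposition~\ref{second.part.main} applied to a paraboloid touching $u$, or a direct estimate since $D^2u(x)>0$ makes $\operatorname{trace}(AA^tD^2u(x))\geq c\sum\lambda_j^2(A)\to\infty$), we may restrict the infimum to $\theta I\leq A\leq\theta^{1-n}I$ for fixed $\theta$, where the convergence is uniform. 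On that compact family, $\inf_A$ and $\lim_{s\to1}$ interchange, yielding $\liminf_{s\to1}(1-s)\mathcal{D}_su(x)\geq \frac{\omega_n}{4n}\inf_A\operatorname{trace}(AA^tD^2u(x))$.

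**Viscosity formulation and the main obstacle.** The statement is ``in the viscosity sense'', so the argument above should be run on test paraboloids touching $u$ from above/below rather than on $u$ itself, invoking Lemma~\ref{viscosidad.clasico} to evaluate $\mathcal{D}_su$ classically at contact points; the subsolution and supersolution halves are handled symmetrically. The main obstacle I anticipate is making the lower bound ``$\liminf\geq$'' rigorous \emph{uniformly} in $A$: one needs the degeneracy estimate (that very eccentric $A$ are ruled out) to hold with $s$-independent constants near $s=1$, so that the infimum is genuinely attained on a fixed compact set where dominated convergence applies. This is precisely the content that Proposition~\ref{second.part.main} and the remark ``$\frac{\mu_0}{\mu_1}=O(\eta_0^n(2s-1)^n)$'' are designed to supply, so the work is mostly in assembling those pieces with the paraboloid bounds; the rest is the elementary polar-coordinate computation above.
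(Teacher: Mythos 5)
Your single-operator computation and the $\limsup$ half of the argument coincide with the paper's: the four-piece decomposition (quadratic model on $B_\rho$, Taylor error on $B_\rho$, middle annulus bounded by $\|u\|_{L^\infty}$, linear-growth tail), the polar-coordinate evaluation giving $\frac{1}{2-2s}\cdot\frac{\omega_n}{n}\operatorname{trace}(AA^tD^2u(x))$, and the $\limsup\le$ by plugging in the matrix realizing the RHS infimum. That part is sound and matches the paper.

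The $\liminf\ge$ half, however, is where you diverge and where you have a genuine gap. You propose to invoke Theorem \ref{main.result.nondegeneracy.monge.ampere} and Proposition \ref{second.part.main} to rule out degenerate $A$ in the $s$-level infimum and then interchange $\inf_A$ and $\lim_{s\to1}$ on a compact family. But those results require $u$ Lipschitz, semiconcave, and $(1-s)\mathcal{D}_su\ge\eta_0>0$ on an open set -- none of which are hypotheses of the present Lemma (which assumes only asymptotic linearity and, in the viscosity version, only second-order touching at the single point $x$). Moreover the fallback you offer, ``a direct estimate since $D^2u(x)>0$ makes $\operatorname{trace}(AA^tD^2u(x))\ge c\sum\lambda_j^2(A)\to\infty$'', controls only the right-hand side, i.e., the finite-dimensional trace functional, not the quantity you actually need to bound from below, namely $(1-s)L_Au(x)$ for $s<1$ and $A$ eccentric. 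Passing from one to the other is precisely the content of the still-to-be-established limit, so as stated the step is circular; and $D^2u(x)>0$ is not assumed anyway.

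The paper sidesteps all of this with a much cheaper device. For each fixed $\epsilon>0$ (and fixed $s$) it simply picks a matrix $A_\epsilon$ with $\det A_\epsilon=1$ that nearly realizes the infimum defining $\mathcal{D}_su(x)$, inserts that single $A_\epsilon$ into the four-term decomposition, and observes that the leading term then equals $\frac{\omega_n\rho^{2-2s}}{4n}\operatorname{trace}(A_\epsilon A_\epsilon^tD^2u(x))$, which is automatically $\ge\frac{\omega_n\rho^{2-2s}}{4n}\inf_A\operatorname{trace}(AA^tD^2u(x))$ by definition of infimum. No non-degeneracy theorem, no compactness of near-minimizers, no interchange of $\inf$ and $\lim$; one just lets $s\to1$ and then $\epsilon\to0$ in the resulting inequality. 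Your route, by contrast, requires establishing first that near-minimizers of the $s$-level infimum lie in a fixed compact set uniformly as $s\to1$ -- which is a strictly harder statement and one whose only available proof in the paper (Section \ref{sect4.unif.elipticity}) lives under convexity and positivity hypotheses that are not supplied here. To repair your argument you would either have to reprove the degeneracy estimate for a mere test paraboloid with $D^2\psi(x)\ge0$, or adopt the paper's plug-in-the-near-minimizer strategy.
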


\begin{lemma}\label{caract.determ.3}
Let $B$ be a symmetric and positive semidefinite matrix. Then,
\[
n\det(B)^{1/n}=\inf\{{\rm trace}(AA^tB)\, |\ \det A=1\}.
\]
\end{lemma}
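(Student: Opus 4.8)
The plan is to reduce to a diagonal computation and then apply the AM–GM inequality. First I would observe that since $B$ is symmetric and positive semidefinite, we may write $B=O^t D O$ with $O$ orthogonal and $D=\operatorname{diag}(\beta_1,\dots,\beta_n)$, $\beta_j\ge 0$. Replacing $A$ by $AO^t$ (which leaves $\det A$ unchanged and is a bijection of the constraint set) reduces the problem to the case $B=D$ diagonal, so it suffices to prove $n(\beta_1\cdots\beta_n)^{1/n}=\inf\{\operatorname{trace}(AA^tD)\,|\,\det A=1\}$. Next I would note that $\operatorname{trace}(AA^tD)=\sum_{i,j}\beta_j A_{ij}^2$ depends on $A$ only through the quantities $c_j:=\sum_i A_{ij}^2$, i.e. the squared norms of the columns of $A$; and these satisfy $\prod_j c_j\ge(\det A)^2=1$ by Hadamard's inequality. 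Conversely, given any positive numbers $c_j$ with $\prod_j c_j=1$, the diagonal matrix $A=\operatorname{diag}(c_1^{1/2},\dots,c_n^{1/2})$ realizes them with $\det A=1$. Hence
\[
\inf\{\operatorname{trace}(AA^tD)\,|\,\det A=1\}=\inf\Big\{\sum_{j=1}^n\beta_j c_j\ \Big|\ c_j>0,\ \prod_{j=1}^n c_j=1\Big\}.
\]

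For the last infimum I would apply the weighted AM–GM inequality: for $c_j>0$ with $\prod_j c_j=1$,
\[
\sum_{j=1}^n \beta_j c_j\ \ge\ n\Big(\prod_{j=1}^n \beta_j c_j\Big)^{1/n}=n\Big(\prod_{j=1}^n\beta_j\Big)^{1/n}=n\det(B)^{1/n},
\]
with equality when all the terms $\beta_j c_j$ are equal, i.e. $c_j=\big(\prod_k\beta_k\big)^{1/n}/\beta_j$ (assuming all $\beta_j>0$; these $c_j$ satisfy $\prod_j c_j=1$). This shows the infimum equals $n\det(B)^{1/n}$ and is attained in the nondegenerate case.

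The one point requiring a little care — and the main (mild) obstacle — is the degenerate case where some $\beta_j=0$, i.e. $\det B=0$. Then the right-hand side is $0$, and I would check that the infimum is indeed $0$ (though not attained): say $\beta_1=0$; take $c_1=t$ large and $c_2=\dots=c_n=t^{-1/(n-1)}$, so $\prod c_j=1$ and $\sum_j\beta_j c_j=\sum_{j\ge2}\beta_j t^{-1/(n-1)}\to 0$ as $t\to\infty$. Combined with the trivial lower bound $\sum_j\beta_j c_j\ge 0$, this gives infimum $0=n\det(B)^{1/n}$. Everything else is a routine verification that the change of variables $A\mapsto AO^t$ and the passage from $A$ to its column norms preserve the constraint set and the objective, together with the standard statements of Hadamard's and AM–GM inequalities.
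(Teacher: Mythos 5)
Your proof is correct, and it reaches the same AM--GM core as the paper, but it gets there by a genuinely different decomposition. The paper proves the lower bound in one line by applying the arithmetic--geometric mean inequality directly at the matrix level: the eigenvalues of $AA^tB$ (which are those of the symmetric matrix $(AA^t)^{1/2}B(AA^t)^{1/2}$, hence nonnegative) satisfy $\operatorname{trace}(AA^tB)\ge n\det(AA^tB)^{1/n}=n\det(B)^{1/n}$. You instead first diagonalize $B$, then trade $A$ for the squared column norms $c_j$, invoking Hadamard's inequality $\prod_j c_j\ge(\det A)^2$, and only then apply scalar AM--GM to $\sum_j\beta_j c_j$. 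Your route is more elementary in the sense that it only uses scalar AM--GM, but it imports Hadamard as an extra ingredient and requires the small scaling remark you implicitly use (if $\prod c_j>1$, replace $c_j$ by $c_j/(\prod_k c_k)^{1/n}$ to decrease the objective) to conclude that restricting to $\prod c_j=1$ does not enlarge the infimum; it would be worth stating that reduction explicitly. For the converse direction both proofs are essentially the same: an explicit diagonal (in $B$'s eigenbasis) minimizer when $B>0$, and an explicit degenerate approximating sequence when $\det B=0$. One aesthetic advantage of the paper's matrix AM--GM is that it handles the case $\det B=0$ for the lower bound automatically, without separate discussion, whereas your reduction through Hadamard is equally clean. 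Both are valid.
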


We devote the rest of this appendix to the proof of Lemmas \eqref{caract.determ.2} and \eqref{caract.determ.3} (Lemma \eqref{caract.determ.3} is well-known, but we include a  proof for completeness).

\begin{proof}[Proof of Lemma \ref{caract.determ.2}]
We shall first consider the case when $u\in\mathcal{C}^2(\mathbb{R}^n)$ and then show how to adapt the arguments to the viscosity setting. To this aim, let $A>0$ such that $\det A=1$ and $0<\rho<R$ to be chosen later on. Then,
\begin{equation}\label{bunch.of.integrals}
\begin{split}
\int_{\mathbb{R}^n}\frac{\delta(u,x,z)}{|A^{-1}z|^{n+2s}}\, dz&=\int_{B_{\rho}(0)}\frac{\langle D^2u(x)Ay,Ay\rangle}{|y|^{n+2s}}\, dy+\int_{B_{\rho}(0)}\frac{\delta(u,x,Ay)- \langle D^2u(x)Ay,Ay\rangle}{|y|^{n+2s}}\, dy\\
  &+\int_{B_R(0)\setminus B_{\rho}(0)}\frac{\delta(u,x,Ay)}{|y|^{n+2s}}\, dy+\int_{\mathbb{R}^n\setminus B_R(0)}\frac{\delta(u,x,Ay)}{|y|^{n+2s}}\, dy.\\
\end{split}
\end{equation}
Now, we are going to analyze each  term on the right-hand side of \eqref{bunch.of.integrals}. First, notice that
\begin{equation}\label{integral.first.term}
\begin{split}
\int_{B_\rho(0)}\frac{\langle D^2u(x)Ay,Ay\rangle}{|y|^{n+2s}}\,dy&=\frac{\rho^{2-2s}}{(2-2s)}\,
\int_{\partial B_1(0)}\langle D^2u(x)Ay,Ay\rangle\,d\mathcal{H}^{n-1}(y)\\
&=\frac{ \rho^{2-2s}}{(2-2s)}\,|B_1(0) | \, {\rm trace}\big(AA^tD^2u(x)\big). 
\end{split}
\end{equation}
The last equality follows integrating by parts (notice that $y$ is the unit normal to $\partial B_1(0)$).

Fix $\epsilon>0$, small. Since $\delta(u,x,Ay)= \langle D^2u(x)Ay,Ay\rangle+o(|y|^2)$ as  $|y|\to0$, we have that
\[
|\delta(u,x,Ay)- \langle D^2u(x)Ay,Ay\rangle| \leq \epsilon |y|^2
\]
if $|y|\leq\rho$ with $\rho$ sufficiently small.
Thus,
\begin{equation}\label{integral.second.term}
\left|\int_{B_\rho(0)}\frac{\delta(u,x,Ay)- \langle D^2u(x)Ay,Ay\rangle}{|y|^{n+2s}}\, dy\right|\leq\frac{\epsilon\,\omega_n}{2-2s}\rho^{2-2s}. 
\end{equation}

On the other hand, 
\begin{equation}\label{integral.third.term}
 \left|\int_{B_R(0)\setminus B_{\rho}(0)}\frac{\delta(u,x,Ay)}{|y|^{n+2s}}\, dy\right|\leq \frac{4\omega_n}{2s} \|u\|_{L^\infty\big(B_R(0)\setminus B_{\rho}(0)\big)}
 \big(\rho^{-2s} -R^{-2s}\big).
\end{equation}

As for the last term in \eqref{bunch.of.integrals}, since $u$ is asymptotically linear, for $R>0$ large enough, there exists some constant $L>0$ such that $|\delta(u,x,Ay)|\leq 2L |Ay| \leq 2L  \lambda_{\max}^{1/2}(AA^t) |y| $. Therefore, 
\begin{equation}\label{integral.fourth.term}
\left|
\int_{\mathbb{R}^n\setminus B_R(0)}\frac{\delta(u,x,Ay)}{|y|^{n+2s}}\, dy
\right|
\leq \frac{2L\,  \lambda_{\max}^{1/2}(AA^t)\omega_{n}}{(2s-1)}R^{1-2s}.
\end{equation}
Collecting \eqref{bunch.of.integrals}--%
\eqref{integral.fourth.term}, we get
\begin{multline*}
2\,(1-s)\,\mathcal{D}_s u(x)\leq\frac{ \omega_{n}}{2n}\cdot {\rm trace}\big(AA^tD^2u(x)\big)\,\rho^{2-2s}+\frac{\epsilon\,\omega_n}{2}\rho^{2-2s}\\
 +\left(\frac{1-s}{2s}\right)\,4\,\omega_n\, \|u\|_{L^\infty\big(B_R(0)\setminus B_{\rho}(0)\big)}
 \big(\rho^{-2s} -R^{-2s}\big)+\left(\frac{1-s}{2s-1}\right) 2L \, \lambda_{\max}^{1/2}(AA^t)\omega_{n}R^{1-2s},
\end{multline*}
and then,
\[
\lim_{s\to1}\big((1-s)\,\mathcal{D}_s u(x)\big)\leq\frac{ \omega_{n}}{4n}\cdot {\rm trace}\big(AA^tD^2u(x)\big)+\frac{\epsilon\,\omega_n}{4}.
\]
Since  both $A$ and $\epsilon$ are arbitrary, we have
\begin{equation}\label{inequality.1.visco.caract}
\lim_{s\to1}\big((1-s)\,\mathcal{D}_s u(x)\big)\leq
\frac{ \omega_{n}}{4n}\cdot
\inf\Big\{
 {\rm trace}\big(AA^tD^2u(x)\big)
 \big|\ A>0,\ \det A=1\Big\}.
\end{equation}

On the other hand, from \eqref{bunch.of.integrals}--%
\eqref{integral.fourth.term} we also get
\[
\begin{split}
\frac{ \omega_{n}}{2n}\,\cdot& \, \rho^{2-2s}\cdot 
\inf\Big\{
 {\rm trace}\big(AA^tD^2u(x)\big) \big|\ A>0,\ \det A=1\Big\} \\
 &\leq (1-s)\int_{\mathbb{R}^n}\frac{\delta(u,x,z)}{|A^{-1}z|^{n+2s}}\, dz
 +\frac{\epsilon\,\omega_n}{2}\rho^{2-2s}\\
 &+\left(\frac{1-s}{2s}\right)\,4\,\omega_n\, \|u\|_{L^\infty\big(B_R(0)\setminus B_{\rho}(0)\big)}\big(\rho^{-2s} -R^{-2s}\big)
 +\left(\frac{1-s}{2s-1}\right) 2L \, \lambda_{\max}^{1/2}(AA^t)\omega_{n}R^{1-2s},
\end{split}
\]
Let $A_\epsilon>0$ with $\det A_\epsilon=1$ such that 
\[
\frac{1}{2}\int_{\mathbb{R}^n}\frac{\delta(u,x,z)}{|A_\epsilon^{-1}z|^{n+2s}}\, dy\leq \mathcal{D}_s u(x)+\epsilon.
\]
Then,
\[
\begin{split}
\frac{ \omega_{n}}{4n}\,\cdot&\, \rho^{2-2s}\cdot
\inf\Big\{
 {\rm trace}\big(AA^tD^2u(x)\big)
 \big|\ A>0,\ \det A=1\Big\} \\
& \leq(1-s)\mathcal{D}_s u(x)+(1-s)\epsilon
+\frac{\epsilon\,\omega_n}{4}\rho^{2-2s}\\
 &+\left(\frac{1-s}{s}\right)\omega_n\, \|u\|_{L^\infty\big(B_R(0)\setminus B_{\rho}(0)\big)}\big(\rho^{-2s} -R^{-2s}\big)
  +\left(\frac{1-s}{2s-1}\right) L\,  \lambda_{\max}^{1/2}(A_\epsilon A_\epsilon^t)\,\omega_{n}R^{1-2s}.
\end{split}
\]
Finally, letting first $s\to1$ and then $\epsilon\to0$, we conclude
\begin{equation}\label{inequality.2.visco.caract}
 \lim_{s\to1}\big((1-s)\,\mathcal{D}_s u(x)\big)\geq
\frac{ \omega_{n}}{4n}\cdot
\inf\Big\{
 {\rm trace}\big(AA^tD^2u(x)\big)
 \big|\ A>0,\ \det A=1\Big\}
\end{equation}
and therefore, the equality.

\medskip

To conclude, let us show how to adapt this argument to the viscosity setting. According to Definition \ref{defn.viscosity.solution}, whenever a function $\psi$ touches $u$ (from above or from below) at a point $x$ in the sense that
\begin{itemize}
\item $\psi(x) = u(x)$,
\item $\psi(y) > u(y)$ (resp. $\psi(y) < u(y)$) for every $y \in N \setminus \{x\}$,
\end{itemize}
where $N$ is a neighborhood of $x$ and $\psi\in C^2(\overline N)$, then we have to evaluate $\mathcal{D}_s v(x)$
for 
\[
 v (y):= \begin{cases}
               \psi(y) &\text{in } N \\
	       u(y) &\text{in } \mathbb{R}^n \setminus N,
              \end{cases}
\]
and consider the appropriate inequality in the corresponding equation. Therefore, the main difference when carrying out the above argument in the viscosity sense is that \eqref{bunch.of.integrals} reads
\begin{equation}\label{bunch.of.integrals.viscosity}
 \begin{split}
\int_{\mathbb{R}^n}\frac{\delta(v,x,z)}{|A^{-1}z|^{n+2s}}\, dz&=\int_{B_{\rho}(0)}\frac{\langle D^2\psi(x)Ay,Ay\rangle}{|y|^{n+2s}}\, dy+\int_{B_{\rho}(0)}\frac{\delta(\psi,x,Ay)- \langle D^2\psi(x)Ay,Ay\rangle}{|y|^{n+2s}}\, dy\\
  &+\int_{B_R(0)\setminus B_{\rho}(0)}\frac{\delta(v,x,Ay)}{|y|^{n+2s}}\, dy+\int_{\mathbb{R}^n\setminus B_R(0)}\frac{\delta(u,x,Ay)}{|y|^{n+2s}}\, dy,
\end{split}
\end{equation}
provided $\rho$ and $R$ are respectively small and large enough. Then, we can apply estimates  \eqref{integral.first.term}--%
\eqref{integral.fourth.term} to each of the terms in \eqref{bunch.of.integrals.viscosity}
and get the corresponding one-sided inequality (analogous to \eqref{inequality.1.visco.caract}, \eqref{inequality.2.visco.caract}) needed to read the equation in the viscosity sense, namely,
\[
\lim_{s\to1}\big((1-s)\,\mathcal{D}_s v(x)\big)\geq
\frac{ \omega_{n}}{4n}\cdot
\inf\Big\{
 {\rm trace}\big(AA^tD^2\psi(x)\big)
 \big|\ A>0,\ \det A=1\Big\}\qquad\textrm{(resp. $\leq$)}.\qedhere
\]
\end{proof}

\begin{proof}[Proof of Lemma \ref{caract.determ.3}]
Let $A$ with $\det A=1$. Then, the matrix $AA^t$ is symmetric and positive definite. Hence, the  inequality between the arithmetic and geometric means  yields,
\[
n\det(B)^{1/n}\leq{\rm trace}(AA^tB).
\]
As this is true for any $A$ with $\det A=1$, we deduce,
\[
n\det(B)^{1/n}\leq\inf\{{\rm trace}(AA^tB)\ |\ \det A=1\}.
\]
To derive the converse inequality, assume first that $B>0$. If we choose
\[
A=P_B\,{\rm diag}\left[\frac{\det(B)^{1/(2n)}}{\lambda_i(B)^{1/2}}\right]P_B^t
\]
with $P_B$ such that $B=P_B\,{\rm diag}(\lambda_i(B))P_B^t$
and $P_BP_B^t=I$, we get 
\[
n\det(B)^{1/n}={\rm trace}(AA^tB)=\min\{{\rm trace}(AA^tB)\ |\ \det A=1\}.
\]

Let us now consider the case when $B\geq0$. Since the result is trivial when $B=0$, we can assume that 0 is an eigenvalue of the matrix  $B$ with multiplicity $n-k<n$, that is,
\[
B=P_B
\left(
\begin{array}{ccc|cccc}
 \lambda_1(B) & \hdots  &0   &0&\hdots&0\\
  \vdots&  \ddots & \vdots   &\vdots&&\vdots\\
  0&\hdots   & \lambda_k(B)    &0&\hdots&0\\\hline
  0&\hdots&0&0&\hdots&0\\
  \vdots&&\vdots&\vdots&\ddots&\vdots\\
  0&\hdots&0&0&\hdots&0
\end{array}
\right)
P_B^t
\]
with $P_BP_B^t=I$. Fix $\epsilon>0$ and  define 
\[
A_\epsilon=P_B\,{\rm diag}(\lambda_i(A_\epsilon))\,P_B^t
\]
with $P_B$ the same  as before and
\[
\lambda_i(A_\epsilon)=
\left\{\begin{split}
&\left(\frac{\epsilon}{{\rm trace}(B)}\right)^\frac12\hspace{38pt}{\rm for}\ i=1,\ldots ,k\\
&\left(\frac{\epsilon}{{\rm trace}(B)}\right)^{- \frac{k}{2(n-k)}}\quad{\rm for}\ i=k+1,\ldots ,n.
\end{split}
\right.
\]
In this way, $A_\epsilon$ is  positive definite with  ${\rm det} (A_\epsilon)=1$, and
\[
{\rm trace}(A_\epsilon A_\epsilon^tB)=\sum_{i=1}^k \lambda_i(A_\epsilon)^2\lambda_i(B)=\epsilon.
\]
Since $\epsilon>0$ is arbitrary, we conclude that
\[
\inf\{{\rm trace}(AA^tB)\ |\ \det A=1\}=0=n\det(B)^{1/n}.\qedhere
\]
\end{proof}


\section{Estimates of the integral of the kernel on the sphere}

\begin{proposition}\label{estimate.kernel.on.sphere}
Let $\epsilon_j>0$, for $j=1,\ldots,k$. Then,
 \[
\frac{\omega_k}{k}
\leq
\frac{\prod_{j=1}^{k}\epsilon_j}{\sum_{i=1}^k\epsilon_{i}^{-2s}}
 \int_{\partial B_1^k(0)}\frac{1}{\left(\sum_{j=1}^{k}\epsilon_{j}^2x_j^2\right)^\frac{k+2s}{2}}\,d\mathcal{H}^{k-1}(x)
 \leq
\frac{\pi^{k/2}}{s\,\Gamma(2-s)\,\Gamma\big(\frac{k}{2}+s\big)}.
 \]
\end{proposition}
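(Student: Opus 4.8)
The plan is to reduce the entire statement to two elementary convexity inequalities on the unit sphere, after first establishing a ``duality'' identity that rewrites the left-hand integral in a more convenient form. Indeed, the crude pointwise bounds on $\big(\sum_j\epsilon_j^2 x_j^2\big)^{-(k+2s)/2}$ are too lossy to recover the sharp constant $\omega_k/k$, so the identity is the key step.

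The first step is to prove
\[
\prod_{j=1}^{k}\epsilon_j\int_{\partial B_1^k(0)}\Big(\sum_{j=1}^{k}\epsilon_j^2 x_j^2\Big)^{-\frac{k+2s}{2}}\,d\mathcal{H}^{k-1}(x)=\int_{\partial B_1^k(0)}\Big(\sum_{j=1}^{k}\frac{x_j^2}{\epsilon_j^2}\Big)^{s}\,d\mathcal{H}^{k-1}(x).
\]
To see this, note that $\sum_j\epsilon_j^2 x_j^2\ge(\min_j\epsilon_j)^2>0$ on $\partial B_1^k(0)$, so the left integral is finite, and evaluate the convergent integral $\int_{\mathbb{R}^k}e^{-Q(y)}|y|^{2s}\,dy$ with $Q(y)=\sum_j\epsilon_j^2 y_j^2$ in two ways. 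Polar coordinates together with $\int_0^\infty e^{-r^2 a}r^{k-1+2s}\,dr=\tfrac12\Gamma\big(\tfrac{k}{2}+s\big)\,a^{-\frac{k+2s}{2}}$ give $\tfrac12\Gamma\big(\tfrac{k}{2}+s\big)$ times the left-hand integral. On the other hand, the substitution $z_j=\epsilon_j y_j$ turns $\int_{\mathbb{R}^k}e^{-Q(y)}|y|^{2s}\,dy$ into $(\prod_j\epsilon_j)^{-1}\int_{\mathbb{R}^k}e^{-|z|^2}\big(\sum_j z_j^2/\epsilon_j^2\big)^{s}\,dz$, and polar coordinates in $z$ (using the same radial integral) produce $(\prod_j\epsilon_j)^{-1}\,\tfrac12\Gamma\big(\tfrac{k}{2}+s\big)$ times the right-hand integral; comparing gives the identity. (Equivalently, this is the change of variables $x\mapsto D^{-1}x/|D^{-1}x|$ on the sphere, $D=\mathrm{diag}(\epsilon_j)$, whose Jacobian onto $\partial B_1^k(0)$ is $(\prod_j\epsilon_j)^{-1}|D^{-1}x|^{-k}$.) Since $\sum_i\epsilon_i^{-2s}=\sum_i b_i^{s}$ with $b_i:=\epsilon_i^{-2}$, the proposition becomes equivalent to
\[
\frac{\omega_k}{k}\sum_{j=1}^{k}b_j^{s}\ \le\ \int_{\partial B_1^k(0)}\Big(\sum_{j=1}^{k}b_j x_j^2\Big)^{s}\,d\mathcal{H}^{k-1}(x)\ \le\ \frac{\pi^{k/2}}{s\,\Gamma(2-s)\,\Gamma\big(\tfrac{k}{2}+s\big)}\sum_{j=1}^{k}b_j^{s},
\]
for all $b_j>0$, and this is where the concavity of $t\mapsto t^{s}$ on $(0,\infty)$ enters.

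For the lower bound: on $\partial B_1^k(0)$ the numbers $x_j^2$ are nonnegative and sum to $1$, so $\sum_j b_j x_j^2$ is a convex combination of $b_1,\dots,b_k$, and Jensen's inequality for the concave map $t\mapsto t^s$ gives $\big(\sum_j b_j x_j^2\big)^{s}\ge\sum_j x_j^2\,b_j^{s}$ pointwise. Integrating over the sphere and using $\int_{\partial B_1^k(0)}x_j^2\,d\mathcal{H}^{k-1}(x)=\tfrac1k\int_{\partial B_1^k(0)}|x|^2\,d\mathcal{H}^{k-1}(x)=\omega_k/k$ yields the left inequality, with equality exactly when all $b_j$ (hence all $\epsilon_j$) coincide. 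For the upper bound, use instead subadditivity $\big(\sum_j b_j x_j^2\big)^{s}\le\sum_j b_j^{s}|x_j|^{2s}$ (from concavity of $t^s$ and $0^s=0$), integrate, and observe that $\int_{\partial B_1^k(0)}|x_j|^{2s}\,d\mathcal{H}^{k-1}(x)$ is independent of $j$. Computing this constant by slicing the sphere in the $x_1$ variable (a Beta-type integral) gives $\int_{\partial B_1^k(0)}|x_1|^{2s}\,d\mathcal{H}^{k-1}(x)=\omega_{k-1}\,\frac{\Gamma(s+\frac12)\,\Gamma(\frac{k-1}{2})}{\Gamma(\frac{k}{2}+s)}=\frac{2\pi^{(k-1)/2}\,\Gamma(s+\frac12)}{\Gamma(\frac{k}{2}+s)}$, so the right inequality follows once the scalar bound $2s\,\Gamma(2-s)\,\Gamma\big(s+\tfrac12\big)\le\sqrt{\pi}$ for $\tfrac12<s<1$ is verified.

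The only points beyond bookkeeping are the duality identity itself (discussed above) and the final scalar Gamma inequality. The latter is elementary: using $\Gamma(2-s)=(1-s)\Gamma(1-s)$ and the reflection formula it becomes $\tfrac{2\pi s(1-s)}{\sin\pi s}\cdot\tfrac{\Gamma(s+1/2)}{\Gamma(s)}\le\sqrt{\pi}$; both sides are smooth on $[\tfrac12,1]$, equality holds at $s=1$ and the inequality is strict for $s<\nobreak1$, which is confirmed by a short monotonicity check of $\log\big(2s\Gamma(2-s)\Gamma(s+\tfrac12)\big)$ on $(\tfrac12,1)$. The degenerate case $k=1$ (where the slicing formula does not apply) is trivial, since both sides of the reduced inequality equal $2b_1^{s}$. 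I expect the identification of the dual form of the integral to be the main obstacle; once it is in place, both inequalities are one-line consequences of Jensen/subadditivity, and they are sharp (the lower bound at $\epsilon_1=\dots=\epsilon_k$, the upper bound in the limit $\epsilon_1\to0$).
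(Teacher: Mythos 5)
Your proof is correct, and the overall framework---establishing the duality identity $\prod_j\epsilon_j\int_{\partial B_1^k}\big(\sum_j\epsilon_j^2x_j^2\big)^{-\frac{k+2s}{2}}\,d\mathcal{H}^{k-1}=\int_{\partial B_1^k}\big(\sum_j\epsilon_j^{-2}x_j^2\big)^{s}\,d\mathcal{H}^{k-1}$ via a Gaussian integral computed two ways, then Jensen's inequality with weights $x_j^2$ for the lower bound---is essentially the paper's argument for the left inequality. (The paper gets there by multiplying the sphere integral by $r^{1-2s}e^{-r^2}$ and integrating in $r$ before changing variables and slicing radially; you evaluate $\int_{\mathbb{R}^k}e^{-\sum\epsilon_j^2y_j^2}|y|^{2s}\,dy$ two ways. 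Same Gaussian trick, cosmetically different bookkeeping.)

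For the upper bound, however, you take a genuinely different route. The paper stays with the Gamma representation $\lambda^{-h}=\Gamma(h)^{-1}\int_0^\infty z^{h-1}e^{-\lambda z}\,dz$, arrives at the displayed identity \eqref{rewriting.spherical.kernel.2}, and then crudely splits the $t$-integral at $t=1$ to get the factor $\frac1{s(1-s)}$. You instead apply the elementary subadditivity $\big(\sum_j b_jx_j^2\big)^{s}\le\sum_j b_j^{s}|x_j|^{2s}$ (valid for $0<s\le1$), integrate over the sphere, and evaluate $\int_{\partial B_1^k}|x_1|^{2s}\,d\mathcal{H}^{k-1}=\frac{2\pi^{(k-1)/2}\Gamma(s+\frac12)}{\Gamma(\frac{k}{2}+s)}$ by slicing. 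Matching the claimed constant then reduces to the single scalar inequality
\[
2s\,\Gamma(2-s)\,\Gamma\!\big(s+\tfrac12\big)\le\sqrt{\pi}\qquad\text{for }\tfrac12<s<1,
\]
with equality at $s=1$. This is the one place where you are terse: you assert it follows from a ``short monotonicity check'' but do not carry it out. The check does go through---one shows $\big(\log(2s\Gamma(2-s)\Gamma(s+\tfrac12))\big)'=\tfrac1s+\psi(s+\tfrac12)-\psi(2-s)>0$ on $(\tfrac12,1)$, splitting at $s=\tfrac34$ where the digamma difference changes sign and using that $\psi(2-s)-\psi(s+\tfrac12)$ is decreasing with value $2-2\log 2<\tfrac43\le\tfrac1s$ at $s=\tfrac12$---but it should be recorded, since it is precisely what the subadditivity route trades for the paper's split-integral estimate. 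With that detail supplied, your version of the upper bound is arguably cleaner: it avoids the somewhat heavy identity \eqref{rewriting.spherical.kernel.2} entirely and makes transparent that the constant $\frac{\pi^{k/2}}{s\Gamma(2-s)\Gamma(\frac{k}{2}+s)}$ contains a one-variable slack factor on top of the truly $k$-dimensional quantity $\omega_{k-1}\frac{\Gamma(s+\frac12)\Gamma(\frac{k-1}{2})}{\Gamma(\frac{k}{2}+s)}$.
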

\begin{remark}Notice that
\[
\lim_{s\to1} \frac{\pi^{k/2}}{s\,\Gamma(2-s)\,\Gamma\big(\frac{k}{2}+s\big)}=\frac{\omega_k}{k}.
\]
\end{remark}

\begin{proof}
\noindent{}1.\quad 
Start  denoting
\[
 \int_{\partial B_1^k(0)}\frac{1}{\left(\sum_{j=1}^{k}\epsilon_{j}^2x_j^2\right)^\frac{k+2s}{2}}\,d\mathcal{H}^{k-1}(x)=C<\infty.
\]
Multiply both sides by $r^{1-2s}e^{-r^2}$ and integrate from 0 to $\infty$, to get
\[
\int_0^\infty \int_{\partial B_1^k(0)}\frac{r^2e^{-r^2}}{\left(\sum_{j=1}^{k}\epsilon_{j}^2(rx_j)^2\right)^\frac{k+2s}{2}}\,d\mathcal{H}^{k-1}(x) \, r^{k-1} \,dr=C\int_0^\infty r^{1-2s}e^{-r^2}\,dr=\frac{\Gamma(1-s)}{2}C.
\]
Using polar coordinates, we get
\begin{equation}\label{aa.bb.1}
C=\frac{2}{\Gamma(1-s)} \int_{\mathbb{R}^k}\frac{|x|^2e^{-|x|^2}}{\left(\sum_{j=1}^{k}\epsilon_{j}^2x_j^2\right)^\frac{k+2s}{2}}\,dx.
\end{equation}
Notice that our choice of the radial function in the numerator, namely $|x|^2e^{-|x|^2}$, makes  the integral finite and well-defined.

\medskip

\noindent{}2.\quad We prove first the lower estimate. A change of variables $y_j=\epsilon_jx_j$ in \eqref{aa.bb.1} yields
\begin{equation}\label{1.ab.ba.1}
\begin{split}
&\int_{\mathbb{R}^k}\frac{|x|^2e^{-|x|^2}}{\left(\sum_{j=1}^{k}\epsilon_{j}^2x_j^2\right)^\frac{k+2s}{2}}\,dx=
\frac{1}{\prod_{j=1}^{k}\epsilon_j}
\int_{\mathbb{R}^k}\frac{\left(\sum_{j=1}^{k}\epsilon_{j}^{-2}y_j^2\right)e^{-\left(\sum_{j=1}^{k}\epsilon_{j}^{-2}y_j^2\right)}}{|y|^{k+2s}}\,dy\\
&=\frac{1}{\prod_{j=1}^{k}\epsilon_j}
 \int_{\partial B_1^k(0)}\Big(\sum_{j=1}^{k}\epsilon_{j}^{-2}y_j^2\Big)\int_0^{\infty} e^{-r^2\left(\sum_{j=1}^{k}\epsilon_{j}^{-2}y_j^2\right)}r^{1-2s}\,dr\,d\mathcal{H}^{k-1}(y).
\end{split}
\end{equation}
Performing a change of variables $t=\big(\sum_{j=1}^{k}\epsilon_{j}^{-2}y_j^2\big)^{1/2}r$ we get
\begin{equation}\label{2.ab.ba.2}
\begin{split}
\int_{\partial B_1^k(0)}\Big(\sum_{j=1}^{k}\epsilon_{j}^{-2}y_j^2\Big)&\int_0^{\infty} e^{-r^2\left(\sum_{j=1}^{k}\epsilon_{j}^{-2}y_j^2\right)}r^{1-2s}\,dr\,d\mathcal{H}^{k-1}(y)
\\
&=
\int_0^{\infty} t^{1-2s}e^{-t^2}dt\,\int_{\partial B_1^k(0)}\Big(\sum_{j=1}^{k}\epsilon_{j}^{-2}y_j^2\Big)^{s}\,d\mathcal{H}^{k-1}(y)
\\
&=
\frac{\Gamma(1-s)}{2}
\int_{\partial B_1^k(0)}\Big(\sum_{j=1}^{k}\epsilon_{j}^{-2}y_j^2\Big)^{s}\,d\mathcal{H}^{k-1}(y).
\end{split}
\end{equation}
Collecting \eqref{aa.bb.1}, \eqref{1.ab.ba.1}, and \eqref{2.ab.ba.2} yields
\[
C=\frac{1}{\prod_{j=1}^{k}\epsilon_j}\int_{\partial B_1^k(0)}\Big(\sum_{j=1}^{k}\epsilon_{j}^{-2}y_j^2\Big)^{s}\,d\mathcal{H}^{k-1}(y).
\]
Then, Jensen inequality with weights $y_j^2$ (notice that $\sum_{j=1}^ky_j^2=|y|^2=1$) yields
\[
\Big(\sum_{j=1}^{k}\epsilon_{j}^{-2}y_j^2\Big)^{s}
\geq
\sum_{j=1}^{k}\epsilon_{j}^{-2s}y_j^2.
\]
Integrating by parts (note that $y$ is the unit normal) we have
\[
\begin{split}
C
&\geq
\frac{1}{\prod_{j=1}^{k}\epsilon_j}\int_{\partial B_1^k(0)}\Big(\sum_{j=1}^{k}\epsilon_{j}^{-2s}y_j^2\Big)\,d\mathcal{H}^{k-1}(y)
\\
&=
\frac{1}{\prod_{j=1}^{k}\epsilon_j}\sum_{i=1}^k\int_{ B_1^k(0)}\partial_{y_i}\Big(\sum_{j=1}^{k}\epsilon_{j}^{-2s}\delta_{ij}y_j\Big)\,dy=
\frac{1}{\prod_{j=1}^{k}\epsilon_j}\sum_{i=1}^k\epsilon_{i}^{-2s}\frac{\omega_k}{k}
.
\end{split}
\]

\medskip

\noindent{}3.\quad The upper estimate follows from the identity
\begin{equation}\label{rewriting.spherical.kernel.2}
 \begin{split}
 \int_{\partial B_1^k(0)}\frac{1}{\left(\sum_{j=1}^{k}\epsilon_{j}^2x_j^2\right)^\frac{k+2s}{2}}\,d\mathcal{H}^{k-1}(x)&\\
 =\frac{\pi^{k/2}}{\Gamma(1-s)\Gamma\big(\frac{k+2s}{2}\big)}&
\sum_{i=1}^k\frac{1}{\epsilon_{i}^{2s}}\int_0^\infty\frac{t^{\frac{k+2s}{2}-1}}{(1+t)\prod_{j= 1}^k\left(\epsilon_i^2+\epsilon_j^2t\right)^{1/2}}\,dt
\end{split}
\end{equation}
just realizing that
\[
 \int_0^\infty\frac{t^{\frac{k+2s}{2}-1}}{(1+t)\prod_{j= 1}^k\left(\epsilon_i^2+\epsilon_j^2t\right)^{1/2}}\,dt
\leq
\frac{1}{\prod_{j=1}^{k}\epsilon_j}\left(
\int_0^1t^{s-1}\,dt +\int_1^\infty t^{s-2}\,dt
\right)
=\frac{1}{s(1-s)\prod_{j=1}^{k}\epsilon_j}.
\]

\medskip

\noindent{}4.\quad To prove \eqref{rewriting.spherical.kernel.2}, we shall use the following formula that follows from changing variables in the definition of the Gamma function 
\[
\lambda^{-h}=\frac{1}{\Gamma(h)}\int_{0}^{\infty} z^{h-1}e^{-\lambda z}\,dz,\qquad\textrm{ for all $h,\lambda>0$.}
\]
Applying this formula with $h=\frac{k+2s}{2}$ and $\lambda=\sum_{i=1}^{k}\epsilon_{i}^2x_i^2$ we can rewrite the kernel as
\[
\left(\sum_{i=1}^{k}\epsilon_{i}^2x_i^2\right)^{-\frac{k+2s}{2}}=\frac{1}{\Gamma\big(\frac{k+2s}{2}\big)}\int_{0}^{\infty} z^{\frac{k+2s}{2}-1}e^{-z\sum_{j=1}^{k}\epsilon_{j}^2x_j^2}\,dz.
\]
We get,
\begin{equation}\label{aa.bb.2}
\begin{split}
\Gamma\left(\frac{k+2s}{2}\right)\int_{\mathbb{R}^k}\frac{|x|^2e^{-|x|^2}}{\left(\sum_{j=1}^{k}\epsilon_{j}^2x_j^2\right)^\frac{k+2s}{2}}\,dx
&=\int_{\mathbb{R}^k} \int_{0}^{\infty} |x|^2 z^{\frac{k+2s}{2}-1}e^{-|x|^2-z\sum_{j=1}^{k}\epsilon_{j}^2x_j^2}\,dz\,dx\\
&=
\sum_{i=1}^k \int_{0}^{\infty} \int_{\mathbb{R}^k}  x_i^2 e^{-\sum_{j=1}^{k}(1+z\epsilon_{j}^2)x_{j}^2}\,dx\,z^{\frac{k+2s}{2}-1}\,dz.
\end{split}
\end{equation}
A change of variables $y_j=(1+z\epsilon_j^2)^{1/2}x_j$ yields
\begin{equation}\label{aa.bb.3}
\begin{split}
&\int_{0}^{\infty} \int_{\mathbb{R}^k}  x_i^2 e^{-\sum_{j=1}^{k}(1+z\epsilon_{j}^2)x_{j}^2}\,dx\,z^{\frac{k+2s}{2}-1}\,dz\\
&\qquad=
 \int_{\mathbb{R}}  y_i^2 e^{-y_{i}^2} dy_i
\left(\int_{\mathbb{R}}  e^{-t^2}\,dt\right)^{k-1}
 \int_{0}^{\infty}\frac{z^{\frac{k+2s}{2}-1}}{(1+z\epsilon_i^2)^{3/2}\prod_{j\neq i}^{k}(1+z\epsilon_j^2)^{1/2}}\,dz\\
 &\qquad=
\frac{\pi^\frac{k}{2}}{2}
 \int_{0}^{\infty}\frac{z^{\frac{k+2s}{2}-1}}{(1+z\epsilon_i^2)^{3/2}\prod_{j\neq i}^{k}(1+z\epsilon_j^2)^{1/2}}\,dz.
\end{split}
\end{equation}
Identity \eqref{rewriting.spherical.kernel.2} follows from \eqref{aa.bb.1}, \eqref{aa.bb.2}, and \eqref{aa.bb.3} with the change of variables $t=\epsilon_i^2 z$.
\end{proof}

\medskip


\bibliographystyle{plain}

\end{document}